\renewcommand{\Re}{\mathbb{R}}
\crefname{hypothesis}{Hypothesis}{Hypotheses}
\title{
Sparse Approximations with Interior Point Methods\thanks{
\textbf{Funding:} {this work was funded by various institutes and research programs. V.~De Simone, D.~di  Serafino and M.~Viola were supported by the Istituto Nazionale di Alta Matematica, Gruppo Nazionale per il Calcolo Scientifico (INdAM-GNCS), and by the V:ALERE Program of the University of Campania ``L. Vanvitelli'', 
Italy. S.~Pougkakiotis was supported by a Principal's Career Development scholarship from the University of Edinburgh, as well as a scholarship from A. G. Leventis Foundation. J.~Gondzio and S.~Pougkakiotis were also supported by the Google project ``Fast (1 + $x$)-order Methods for Linear Programming".
We wish to remark that this study does not have any conflict of interest to disclose. }}
}
\author{Valentina De Simone\thanks{Department of Mathematics and Physics, University of Campania ``L. Vanvitelli'', Caserta, Italy         (\email{valentina.desimone@unicampania.it}, \email{marco.viola@unicampania.it}).}
    \and Daniela di Serafino\thanks{Department of Mathematics and Applications, University of Naples Federico II, Napoli, Italy (\email{daniela.diserafino@unina.it}).}
    \and Jacek Gondzio\thanks{School of Mathematics and Maxwell Institute for Mathematical Sciences, University of Edinburgh, Edinburgh, United Kingdom
    (\email{J.Gondzio@ed.ac.uk}, \email{S.Pougkakiotis@sms.ed.ac.uk}).}
    \and Spyridon Pougkakiotis\footnotemark[4]
    \and Marco Viola\footnotemark[2]
}
\date{November 24, 2021}
\DeclareMathOperator{\diag}{diag}
\DeclareMathOperator{\rank}{rank}
\DeclareMathOperator{\sign}{sign}
\def\R{\mathbb R}
\begin{document}

\maketitle
\centerline{\footnotesize VERSION 3 -- November 24, 2021}

\begin{abstract}
Large-scale optimization problems that seek sparse solutions have become ubiquitous. 
They are routinely solved with various specialized first-order methods.
Although such methods are often fast, they usually struggle with not-so-well 
conditioned problems. In this paper, specialized variants 
of an interior point-proximal method of multipliers
are proposed and analyzed for problems of this class. Computational experience
on a variety of problems, namely, multi-period portfolio optimization, classification of data
coming from functional Magnetic Resonance Imaging,  restoration of images corrupted by Poisson noise,
and classification via regularized logistic regression, provides substantial evidence that interior point methods,
equipped with suitable linear algebra, can offer a noticeable advantage over first-order approaches.
\end{abstract}

\begin{keywords}
  Sparse Approximations, Interior Point Methods, 
  Proximal Methods of Multipliers, Nonlinear Convex Programming,
  Solution of KKT Systems, Portfolio Optimization,
  Image Restoration, Classification in Machine Learning.
\end{keywords}

\begin{AMS}
  65K05, 90C51, 90C25, 65F10, 65F08, 90C90. 
\end{AMS}


\section{Introduction\label{sec:Intro}}
We are concerned with the efficient solution of a class of problems 
which are very large and are expected to yield sparse solutions. 
In practice, the sparsity is often induced by the presence of $\ell_1$ 
norm terms in the objective. We assume that a general problem of the following form
\begin{eqnarray}
    \begin{array}{rl}
        \displaystyle \min_{x} & f(x) + \tau_1 \| x \|_1 + \tau_2 \| L x \|_1 \\
        \text{s.t.}  & A x = b,  
    \end{array}
    \label{SparseZero}
\end{eqnarray}
needs to be solved, where $f:\Re^n\mapsto\Re$ is a twice continuously differentiable convex function,
$L\in\Re^{l\times n}$, $A\in\Re^{m\times n}$, $b\in\Re^{m}$, $m \le n$, and $\tau_1,\tau_2>0$. We are particularly
interested in problems for which $f(x)$ displays some level of separability. 
The terms $\| x \|_1$ and $\| L x \|_1$ induce sparsity in the vector 
$x$ and/or in some (possibly redundant) dictionary $L x$. 
Numerous real-life problems can be recast into the form \eqref{SparseZero}.
Among the various application areas, one can find portfolio optimization \cite{Markowitz1959},
signal and image processing \cite{ChenDonohoSaunders,rudin:1992}, classification in statistics \cite{tibshirani:2005} 
and machine learning \cite{mybib:Vapnik}, inverse problems \cite{TroppWrightIEEE} 
and compressed sensing \cite{CandesRombergTao}, to mention just a few.

Optimization problems arising in these applications are usually solved 
by different specialized variants of first-order methods. Indeed, 
highly specialized and tuned to a narrow class of problems, first-order 
methods often outperform standard off-the-shelf second-order techniques; 
the latter might be too expensive or might struggle with excessive 
memory requirements. Such comparisons are not fair though. 
With this paper we hope to change the incorrect opinion on the second-order 
methods. 
\par Various second-order approaches have been proposed in the literature for problems of the form of \eqref{SparseZero}. In particular, one might employ proximal (projected) Newton-type methods (see \cite{LeeSunSaundersSIAMOpt,SchmidtKinSraProjNewton}) in which a proximal term is added to deal with the non-smooth part of the objective function (unless its minimum-norm subgradient has a closed form solution, in which case the proximal term can be excluded). Alternatively, such problems can be solved by means of standard semi-smooth Newton methods (see \cite{KlatteKummerRegCalcMeth,KojimaShindoOperRes} and the references therein) or semi-smooth Newton methods combined with the augmented Lagrangian method (see, e.g., \cite{LiSunTohSIAMOpt}). The aforementioned approaches employ line-search schemes that allow one to show linear or local superlinear convergence, given certain assumptions. For methods involving proximal terms, superlinear convergence is only guaranteed when the associated penalty parameters increase to infinity.
\par Here we consider Interior Point Methods (IPMs), which exhibit better convergence (in practice and in theory), at the expense of worse conditioning of the associated linear systems that have to be solved at every IPM iteration.
When efficiently implemented and specialized to a particular 
application, interior point methods offer an attractive alternative. 
They can be equally (or more) efficient than the best first-order methods available, 
and they deliver unmatched robustness and reliability. 
\par The specializations of interior point methods proposed in this paper do not 
go beyond what has been commonly exploited by first-order methods. 
Namely we propose: 
\begin{itemize}
   \item 
to exploit special features of the problems in the linear algebra of IPMs, and 
   \item 
to take advantage of the expected sparsity of the optimal solution.
\end{itemize}

In order to achieve our goals we propose to convert sparse approximation problems 
to standard smooth nonlinear convex 
programming ones by replacing the $\ell_1$ norm terms with a usual modeling trick 
in which an absolute value $|a|$ is substituted with the sum 
of two non-negative parts, $|a| = a^+ + a^-$, where $a^+ = \max\{a,0\}$ 
and $a^- = \max\{-a,0\}$.
%
By introducing the auxiliary variable \mbox{$d=Lx\in\Re^l$,} 
problem~(\ref{SparseZero}) is then transformed into the following one:
\begin{eqnarray}
    \begin{array}{cl}
       \displaystyle \min_{x^+, x^-, d^+, d^-} & f(x^+ - x^-) + \tau_1 (e_n^\top x^+ + e_n^\top x^-) 
        + \tau_2 (e_l^\top d^+ + e_l^\top d^-), \\
        \text{s.t.}  & A (x^+ - x^-) = b, \\ 
        & L (x^+ - x^-) = d^+ - d^-, \\
        & x^+, x^-, d^+, d^- \geq 0, 
    \end{array}
    \label{SparseOne}
\end{eqnarray}
where $x^+,x^-\in\Re^n$ are such that $x = x^+ - x^-$, $d^+,d^-\in\Re^l$ 
are such that $d = d^+ - d^-$, and $e_n\in\Re^n$, $e_l\in\Re^l$ 
are vectors with all entries equal to 1.
It is worth observing that \eqref{SparseZero} 
and \eqref{SparseOne} are equivalent; indeed the presence of linear 
terms which penalize for the sum of positive and negative parts 
of vectors $x$ and $d$ guarantees that at optimality only one 
of the split variables can take a nonzero value. We also note that
the number of variables is greater than or equal to the number of equality constraints in \eqref{SparseOne}.
Although \eqref{SparseOne} is larger than~\eqref{SparseZero} 
because the variables have been replicated and new constraints 
have been added, it is in a form eligible to a straightforward application 
of an interior point method. We expect that the well-known 
ability of IPMs to handle large sets of linear equality 
and non-negativity constraints will compensate for this increase 
of the problem dimension.

IPMs employ Newton method to solve a sequence of so-called logarithmic barrier subproblems (see Section~\ref{sec:IPMsBack} for details).
In standard implementations of IPMs this requires many involved linear algebra 
operations (building and inverting the Hessian matrix), and for large problems 
it might become prohibitively expensive. In this paper we demonstrate 
that the use of inexact Newton method \cite{mybib:Bellavia,iter:CdAdSdST,Gondzio-SIOPT2013} 
combined with a knowledgeable choice and appropriate tuning of linear algebra 
solvers (see~\cite{iter:dAdSdS,iter:dSO,IPM_25_years_later,iter:GT-COAP} 
and the references therein) is the key to success when developing an IPM 
specialized to a particular class of problems. 
We also demonstrate an attractive ability of IPMs to select important 
variables and prematurely drop the irrelevant ones, a feature which is very 
well suited to solving sparse approximation problems in which the majority 
of variables are expected to be zero at optimality. It is worth mentioning 
at this point that our understanding of the features of IPMs applied to sparse 
approximation problems benefitted from the earlier studies which focused 
on compressed sensing problems \cite{FountoulakisGondzio2016,FountoulakisEtAl2013}.

Ultimately, we provide computational evidence that IPMs can be more 
efficient than methods which are routinely used for the solution of sparse 
approximation problems by exploiting only first-order information.

\subsection*{Notation}
Throughout this paper we use lowercase Roman and Greek fonts to indicate scalars and vectors
(the nature is clear from the context). Capital italicized Roman fonts are used to indicate matrices.
Superscripts are used to denote the components of a vector/matrix. As an example, given
$M\in\Re^{m \times n}$, $v\in\Re^n$, $\mathcal{R}\subseteq \left\lbrace 1, \ldots, m \right\rbrace$,
and $\mathcal{C} \subseteq \left\lbrace 1, \ldots, n \right\rbrace$, we set
$v^{\mathcal{C}} := (v^i)_{i\in\mathcal{C}}$ and $M^{\mathcal{R},\mathcal{C}} := \left( m^{ij} \right)_{i\in\mathcal{R}, j\in\mathcal{C}}$,
where $v^i$ is the $i$-th entry of $v$ and $m^{ij}$ the $(i,j)$-th entry of $M$.
We use $\lambda_{\min}(B)$ ($\lambda_{\max}(B)$, respectively) to denote the minimum (maximum) eigenvalue
of an arbitrary square matrix $B$ with real eigenvalues. Similarly, $\sigma_{\min}(B)$ ($\sigma_{\max}(B)$, respectively) denotes
the minimum (maximum) singular value of an arbitrary rectangular matrix $B$. We use $B \succ 0$ to indicate that
a square matrix $B$ is symmetric positive definite.
We use $e_n$ and $0_n$ to denote a column vector of size $n$ with all entries equal to $1$ and $0$, respectively.
Moreover, we use $I_n$ to indicate the identity matrix of size $n$ and $0_{m,n}$ to denote the zero matrix
of size $m \times n$. We use subscripts to denote the elements of a sequence, e.g., $\left\lbrace x_k \right\rbrace $.
Norms $\| \cdot \|$ are $\ell_2$. Other norms are identified by adding suitable subscripts. For any finite set $\mathcal{A}$,
we denote by $\vert \mathcal{A} \vert$ its cardinality. Finally, when referring to convex programming problems,
we implicitly assume that the problems are linearly constrained.

\subsection*{Structure of the article}
The rest of this article is organized as follows. In Section~\ref{sec:IPMsBack} we briefly describe IPMs for convex programming,
focusing in particular on the \textit{Interior Point-Proximal Method of Multipliers} (IP-PMM), which is used in the subsequent sections. The choice of
IP-PMM is motivated by the fact that it merges an infeasible IPM with the Proximal Method of Multipliers (PMM), in order to keep the fast and
reliable convergence properties of IPMs and the strong convexity of the PMM subproblems, thus achieving better efficiency and robustness than both methods.
In this section we also outline the testing environment used throughout the paper. In Sections~\ref{sec:Portfolio} to~\ref{sec:ClassificationRLR}
we present four applications formulated as optimization problems with sparsity sought in the solutions, and recast them in the form~\eqref{SparseOne}.
In detail, in Section~\ref{sec:Portfolio} we focus on a multi-period portfolio selection strategy, in Section~\ref{sec:fMRI} on the classification of data
coming from functional Magnetic Resonance Imaging (fMRI), in Section~\ref{sec:ImgRestor} on the restoration of images corrupted by Poisson noise,
and in Section~\ref{sec:ClassificationRLR} on linear classification through regularized logistic regression. The first two applications yield convex quadratic programming
problems, while the remaining ones yield general nonlinear convex programming problems. For each application, we provide a brief
description of its mathematical model and explain how IP-PMM is specialized for that case in terms of linear algebra solvers, including variable dropping
strategies to help sparsification; we also show the results of computational experiments, including comparisons with state-of-the art methods widely used by
the scientific community on the selected problems.


\section{Interior Point Methods for Convex Programming\label{sec:IPMsBack}}

\noindent We consider the following convex programming problem:
\begin{equation} \label{non-regularized primal} 
    \underset{x}{\text{min}} \  f(x), \quad \text{s.t.} \ \  Ax = b, \ \   x \geq 0, 
\end{equation}
\noindent where $x \in \Re^n$, $A \in \Re^{m\times n}$, and $f \colon \Re^n \mapsto \Re$ is a twice differentiable convex function. Using the
Lagrangian duality theory \cite{mybib:Bertsekas}, and defining a function $F(w): \Re^{2n + m} \mapsto \Re^{2n+m}$, we write the KKT
(optimality) conditions as follows:
\begin{equation} \label{non-regularized FOC}
    F(w) = \begin{bmatrix}
        \nabla f(x) - A^\top y - z \\
        Ax - b\\
        XZe_n\\
    \end{bmatrix}
    = 
    \begin{bmatrix}
        0_n\\
        0_m\\
        0_n\\
    \end{bmatrix},
\end{equation} 
\noindent where $y \in \Re^m$ and $z \in \Re^n$ are the Lagrange multipliers corresponding to the equality and inequality constraints respectively, while $X,\ Z \in \Re^{n \times n}$ denote the diagonal matrices with diagonal entries $x^i$ and $z^i$ (respectively), $\forall\ i \in \{1,\ldots,n\}$.
\par Problem~\eqref{non-regularized primal} can be solved using a primal-dual IPM. There are numerous variants of IPMs and the reader is referred to \cite{IPM_25_years_later} for an extended literature review. IPMs handle the non-negativity constraints of the problems with logarithmic barriers in the objective. That is, at each iteration $k$, we choose a \textit{barrier parameter} $\mu_k$ and form the \textit{logarithmic barrier problem}:
\begin{equation} \label{Primal Barrier Problem}
        \min_{x} \ f(x) - \mu_k \sum_{j=1}^n \ln x^j , \quad \text{s.t.} \  \ Ax = b.
\end{equation}
\par Then, a damped Newton method (or possibly an inexact variant of it \cite{mybib:Bellavia,Gondzio-SIOPT2013,Nester_Nemir_IPM_Convex}) is usually employed in order to approximately solve problem \eqref{Primal Barrier Problem}. Applying it to the optimality conditions of \eqref{Primal Barrier Problem}, and further forming the augmented system (as is done in Section \ref{sec:Interior Point-Proximal Method of Multipliers}), we obtain a system of the following form:
\begin{equation} \label{non-regularized Augmented System}
    \begin{bmatrix} 
        -(\nabla^2 f(x_k) +\Theta_k^{-1}) &   A^\top \\
        A & 0_{m,m}
    \end{bmatrix}
    \begin{bmatrix}
        \Delta x_k\\ 
        \Delta y_k
    \end{bmatrix}
    = 
    \begin{bmatrix}
        \nabla f(x_k) - A^\top y_k - \sigma_{k} \mu_k X_k^{-1} e\\
        b-Ax_k
    \end{bmatrix},
\end{equation}
\noindent where $\Theta_k = X_k Z_k^{-1}$ and the entries of $x_k$ and $z_k$ are maintained positive throughout the algorithm, allowing the use of the logarithmic barrier (see Section~\ref{sec:Interior Point-Proximal Method of Multipliers} for details). One can observe that the matrix $\Theta_k$ contains some very large and some very small elements close to optimality. Hence, the matrix in (\ref{non-regularized Augmented System}) becomes increasingly ill-conditioned, as the method progresses. Notice that as $\mu_k \rightarrow 0$, an optimal solution of \eqref{Primal Barrier Problem} converges to an optimal solution of \eqref{non-regularized primal}. Polynomial convergence of such methods (with respect to the number of variables $n$), for various classes of problems, has been proved multiple times in the literature (see for example \cite{Nester_Nemir_IPM_Convex,Zhang_Complex_IPM}). 
\par A system like \eqref{non-regularized Augmented System} can either be solved directly (using an appropriate factorization, as in \cite{Gondz_Altman_Reg_IPM,Orb_Fried_Reg_IPM,paper_IP_PMM}) or iteratively (using an appropriate Krylov subspace method, as in \cite{paper_IP_PMM_inexact,iter:CdAdSdS-2007b,iter:dAdSdS,iter:GT-COAP}). While the former approach is very general, it becomes problematic as the problem size increases. On the other hand, iterative methods (accompanied by appropriate preconditioners) may be difficult to generalize. However, if applied to specific classes of problems, they make possible solving huge-scale instances, by avoiding the explicit storage of the problem matrices.


\subsection{Regularization in IPMs\label{sec:Regularization}}
\par In the context of IPMs, it is often beneficial to include some regularization, in order to improve the spectral properties of the system matrix in \eqref{non-regularized Augmented System}. For example, notice that if the constraint matrix $A$ is rank deficient, then the matrix in \eqref{non-regularized Augmented System} might not be invertible. The latter can be immediately addressed by the introduction of a dual regularization, say $\delta > 0$, ensuring that $\rank([A\; \delta I_m] ) = m$. The introduction of a primal regularization, say $\rho > 0$, can ensure that the matrix $\nabla^2 f(x_k) +\Theta_k^{-1} + \rho I_n$, has eigenvalues that are bounded away from zero, and hence a significantly better worst-case conditioning than that of $\nabla^2 f(x_k)+\Theta_k^{-1}$. To produce a diagonal term in the (2,2) block Vanderbei added artificial variables to all the constraints \cite{mybib:Vanderbei-SQM}.
Saunders and Tomlin \cite{mybib:ST-Reg,mybib:S-pdReg} achieved a similar 
result for the (1,1) and (2,2) blocks, by adding Tikhonov-type regularization terms to the original problem. In the aforementioned approaches, in order to guarantee that the solution of the original problem is retrieved, one has to ensure that the regularization parameters are smaller than some unknown (nonzero) value, in which case the regularization can be shown to be \emph{exact} (see \cite{friedlanderSIAMOpt}, and the references therein).
\par In later works, these Tikhonov-type regularization methods were replaced by algorithmic regularization schemes. In particular, one can observe that a very natural way of introducing primal regularization to problem (\ref{non-regularized primal}), is through the application of the primal proximal point method. Similarly, dual regularization can be incorporated through the application of the dual proximal point method. This is a well-known fact. The authors in~\cite{Gondz_Altman_Reg_IPM} presented a primal-dual regularized IPM for convex Quadratic Programming (QP), and interpreted this regularization as the application of the proximal point method. Subsequently, the authors in~\cite{Orb_Fried_Reg_IPM} developed a primal-dual regularized IPM, which applies PMM to solve convex QP problems, and employs a single IPM iteration for approximating the solution of each PMM subproblem. There, global convergence of the method was proved, under some assumptions. A variation of the method proposed in~\cite{Orb_Fried_Reg_IPM} is given in~\cite{Pougk_Gondz_non_diag_reg}, where general non-diagonal regularization matrices are employed, as a means of further improving factorization efficiency. Then, the authors in~\cite{paper_IP_PMM} proposed an IP-PMM and proved (under standard assumptions) that it achieves convergence to an $\epsilon$-optimal solution in a polynomial (in $n$) number of iterations for convex QP problems and for linear semidefinite programming problems (see~\cite{paper_IP_PMM_SDP}), 
including cases when the associated Newton systems are solved iteratively. Finally, a similar algorithm was proposed for general nonlinear programming problems in \cite{ArmandOhmeniJOTA}, and was shown to be convergent under standard assumptions. In all these cases, algorithmic regularization ensures stability, while allowing one to retrieve the solution of the original problem.

\subsection{Interior Point-Proximal Method of Multipliers\label{sec:Interior Point-Proximal Method of Multipliers}}

\par In this subsection, we derive an IP-PMM suitable for solving convex programming problems. The method is based on the developments in \cite{paper_IP_PMM}. We consider the following primal problem (which can be equivalently formulated as \eqref{non-regularized primal}, by adding some additional constraints):
\begin{equation}  \label{primal problem} 
        \min_x \ f(x), \quad \text{s.t.} \ \ Ax = b, \ \ x^{\mathcal{I}} \geq 0, \ \ x^{\mathcal{F}}\ \text{free},
\end{equation}
\noindent where $\mathcal{I}  \subseteq \{1,\ldots,n\}$, $\mathcal{F} = \{1,\ldots,n\}\setminus\mathcal{I}$. In the above problem, we assume that the dimensions of the involved matrix are the same as those in \eqref{non-regularized primal}. Effectively, an IP-PMM arises by merging PMM with an infeasible IPM. For that purpose, assume that, at some iteration $k$ of the method, we have available an estimate $\eta_k$ for an optimal Lagrange multiplier vector $y^*$ associated to the equality constraints of \eqref{primal problem}.  Similarly, we denote by $\zeta_k$ an estimate of a primal solution $x^*$. Now, we define the proximal penalty function that has to be minimized at the $k$-th iteration of the PMM, for solving \eqref{primal problem}, given the estimates $\eta_k,\ \zeta_k$:
\begin{equation*}
    \mathcal{L}^{PMM}_{\rho_k,\delta_k} (x; \zeta_k,\eta_k) = f(x)-\eta_k^\top (Ax - b) + \frac{1}{2\delta_k}\|Ax-b\|_2^2 + \frac{\rho_k}{2}\|x-\zeta_k\|_2^2,
\end{equation*}
\noindent with $\{\delta_k\},\ \{\rho_k\}$ two positive non-increasing sequences of penalty parameters. Following~\cite{paper_IP_PMM}, we require that these parameters decrease at the same rate as $\mu_k$; however, in practice we never allow these values to be reduced below a certain appropriately chosen threshold. For more details on how to choose these constants for general problems, the reader is referred to~\cite{Pougk_Gondz_non_diag_reg}, where a perturbation analysis of regularization is performed. In order to solve the PMM subproblem, we apply one (or a few) iterations of an infeasible IPM. To that end, we alter the previous penalty function, by including logarithmic barriers, that is
\begin{equation} \label{Proximal IPM Penalty}
    \mathcal{L}^{IP-PMM}_{\rho_k,\delta_k} (x; \zeta_k,\eta_k) = \mathcal{L}^{PMM}_{\rho_k, \delta_k} (x; \zeta_k,\eta_k) - \mu_k \sum_{j \in \mathcal{I}}\ln x^j,
\end{equation}
\noindent where $\mu_k > 0$ is the barrier parameter. In order to form the optimality conditions of this subproblem, we equate the gradient of $\mathcal{L}^{IP-PMM}_{\rho_k, \delta_k}$ with respect to $x$ to the zero vector, i.e.,
\begin{equation*}
    \nabla f(x) - A^\top \eta_k + \frac{1}{\delta_k}A^\top(Ax - b) + \rho_k (x - \zeta_k) -\mathscr{P}^\top \begin{bmatrix} 0_{|\mathcal{F}|} \\ \mu_k (X^{\mathcal{I}})^{-1}e_{|\mathcal{I}|} \end{bmatrix} = 0_n,
\end{equation*}
\noindent where $\mathscr{P}$ is an appropriate permutation matrix, such that $\mathscr{P} x_k = [(x_k^{\mathcal{F}})^\top, (x_k^{\mathcal{I}})^\top]^\top$. Next, we define the variables $y = \eta_k - \frac{1}{\delta_k}(Ax - b)$ and $z \in \Re^n$, such that $z^{\mathcal{I}} = \mu_k (X^{\mathcal{I}})^{-1}e_{|\mathcal{I}|}$, $z^{\mathcal{F}} = 0$, to obtain the following (equivalent) system of equations:
\begin{equation*} \begin{bmatrix}
        \nabla f(x) - A^\top y - z + \rho_k (x-\zeta_k)\\
        Ax + \delta_k (y - \eta_k) - b\\
        X^{\mathcal{I}} z^{\mathcal{I}} -  \mu_k e_{|\mathcal{I}|}
    \end{bmatrix} = \begin{bmatrix}
        0_n\\
        0_m\\
        0_{|\mathcal{I}|}
    \end{bmatrix}.
\end{equation*}

\noindent To approximately solve the previous mildly nonlinear system of equations, at every iteration $k$, we employ a damped perturbed Newton method (that is, we alter its right-hand side using a centering parameter $\sigma_k \in  (0,1)$). In other words, at every iteration of IP-PMM we have available an iteration triple $(x_k,y_k,z_k)$ and we want to solve the following system of equations:
\begin{multline}\label{Newton system}
    \begin{bmatrix}
        -(\nabla^2 f(x_k)  + \rho_k I_n)  & A^\top & I_n\\
        A & \delta_k I_m  & 0_{m,n}\\
        Z_k & 0_{n,m} & X_k
    \end{bmatrix} \begin{bmatrix}
        \Delta x\\
        \Delta y\\
         \mathscr{P}^\top \begin{bmatrix} 0_{|\mathcal{F}|} \\ \Delta z^{\mathcal{I}} \end{bmatrix}
    \end{bmatrix} \\
    = \begin{bmatrix}
        \nabla f(x_k) - A^\top y_k + \sigma_k\rho_k (x_k - \zeta_k) -  z_k \\
        b - Ax_k - \sigma_k \delta_k (y_k - \eta_k)\\
       \mathscr{P}^\top \begin{bmatrix} 0_{|\mathcal{F}|} \\ \sigma_k \mu_k e_{|\mathcal{I}|} -X_k^{\mathcal{I}} z_k^{\mathcal{I}} \end{bmatrix}
    \end{bmatrix},
\end{multline}
Notice that all penalty parameters in the right-hand side are multiplied by $\sigma_k$. In essence $\sigma_k$ determines how fast (or slow) these parameters are going to decrease in the next IP-PMM iteration. Following a standard development of IPMs, $x_k^{\mathcal{I}}$ is maintained positive and hence the logarithmic barrier in~\eqref{Proximal IPM Penalty} is well defined while also $z_k^{\mathcal{I}}$ remains positive. From the third block-equation of \eqref{Newton system} we have $\Delta z^{\mathcal{F}} = 0$ and 
$$
    \Delta z^{\mathcal{I}} = (X_k^{\mathcal{I}})^{-1}(-Z_k^{\mathcal{I}} \Delta x^{\mathcal{I}} + \sigma_k \mu_k e_{|\mathcal{I}|} -X_k^{\mathcal{I}} z_k^{\mathcal{I}}).$$
\noindent In light of the previous computations, \eqref{Newton system} reduces to:
\begin{equation}\label{regularized augmented system}
    \begin{split}
        \begin{bmatrix}
            -(\nabla^2 f(x_k)+ \Xi_k + \rho_k I_n)  & A^\top\\
            A & \delta_k I_m 
        \end{bmatrix} \begin{bmatrix}
            \Delta x\\
            \Delta y
        \end{bmatrix} = \begin{bmatrix}
            r_{1,k}\\
            r_{2,k}
        \end{bmatrix},\end{split}
\end{equation}
\noindent where
\begin{equation}\label{augmented system residuals}
    \begin{bmatrix}
        r_{1,k}\\
        r_{2,k}
    \end{bmatrix} = \begin{bmatrix}
        \nabla f(x_k) - A^\top y_k + \sigma_k\rho_k (x_k - \zeta_k)- \mathscr{P}^\top \begin{bmatrix} 0_{|\mathcal{F}|}\\ \sigma_k \mu_k (X_k^{\mathcal{I}})^{-1} e_{|\mathcal{I}|} \end{bmatrix} \\
        b - Ax_k - \sigma_k\delta_k (y_k - \eta_k)
    \end{bmatrix},
\end{equation}
\noindent and
\begin{equation*}
    \Xi_k \coloneqq {\mathscr{P}^\top} \begin{bmatrix}  0_{|\mathcal{F}|,|\mathcal{F}|} & 0_{|\mathcal{I}|,|\mathcal{F}|}\\ 0_{|\mathcal{F}|,|\mathcal{I}|} & (X_k^{\mathcal{I}})^{-1}(Z_k^{\mathcal{I}}) \end{bmatrix} {\mathscr{P}}.
\end{equation*}
\noindent For the rest of this paper, we will make use of the notation $\Xi_k$ in cases where only a subset of the primal variables $x$ are constrained to be non-negative. In the case where all the entries of $x$ must satisfy this constraint, we will employ the standard IPM notation $\Theta_k \equiv \Xi_k^{-1}$, since in this case $\mathcal{F}  = \emptyset$. In the special case where $\nabla^2 f(x_k)$ is a diagonal (or zero) matrix, it could be beneficial to further reduce system \eqref{regularized augmented system}, by eliminating variables $\Delta x$. The resulting normal equations yield a positive definite system of equations that reads as follows:
\begin{equation} \label{regularized normal equations}
    \begin{array}{ll}
    \displaystyle \phantom{=} \left(A (\nabla^2 f(x_k) + \Xi_k + \rho_k I_n)^{-1} A^\top + \delta_k I_m \right) \Delta y \\[2mm]
    \displaystyle = r_{2,k} + A (\nabla^2 f(x_k) + {\Xi_k} + \rho_k I_n)^{-1}(r_{1,k}).
    \end{array}
\end{equation}
\noindent The parameters $\eta_k,\ \zeta_k$ are tuned as in \cite{paper_IP_PMM}. In particular, we set $\eta_0 = y_0$ and $\zeta_0 = x_0$, where $(x_0,y_0,z_0)$ is the starting point of IP-PMM. Then, at the end of every iteration $k$, we set $(\zeta_{k+1},\eta_{k+1}) = (x_{k+1},y_{k+1})$ only if the primal and dual residuals are decreased sufficiently. If the latter is not the case, we set $(\zeta_{k+1},\eta_{k+1}) =(\zeta_{k},\eta_{k}) $.

It has been demonstrated in \cite{paper_IP_PMM} that IP-PMM using a single Newton step per iteration converges to an $\epsilon$-optimal solution in a number of iterations that is polynomial with respect to the problem size $n$, if $f$ is a convex quadratic function. Furthermore, the latter holds for linear semidefinite programming problems, even if one solves the Newton system inexactly, i.e., requiring only the residual to be bounded by a suitable multiple of the barrier parameter $\mu_k$ (see \cite{paper_IP_PMM_SDP}). Nevertheless, the previous is not proven to hold for the general convex (nonlinear) case. In the latter case, one would have to employ Newton method combined with a \textit{line-search} or a \textit{trust-region} strategy (see, e.g., \cite{ArmandOhmeniJOTA,WaltzMoralNocedOrban}), in order to guarantee the convergence of the method.
In all the cases analyzed in this work we make use of a simple Mehrotra-type \cite{SIAMJOpt:Mehr} predictor-corrector scheme, 
which in general is sufficient to produce good directions that allow the method to converge quickly to the optimal solution. In the corrector stage, the right-hand side is approximated by a linearization of the function that is being minimized (see~\cite{SIAMJOpt:TapZhaSalWei}).

\subsection{Testing environment\label{sec:TestingEnv}}
The various specializations of IP-PMM discussed in the following sections have been implemented in MATLAB and compared with MATLAB implementations of state-of-the-art methods for each specific problem. All the tests were run with MATLAB R2019b on an Intel Xeon Platinum 8168 CPU with 192 GB of RAM, available from the \textit{magicbox} server at the Department of Mathematics and Physics of the University of Campania ``L.~Vanvitelli".


\section{Portfolio Selection Problem\label{sec:Portfolio}}

Portfolio selection is one of the most central topics in modern financial economics.
It deals with the decision problem of how to allocate resources among
several competing assets in accordance with the investment objectives.
For medium- and long-time horizons, the multi-period strategy is suitable, because it allows the change of the allocation
of the capital across the assets, taking into account the evolution of the available information.
In a multi-period setting, the investment period is partitioned into $m$ sub-periods, delimited by $m+1$ rebalancing dates $t^j$.
The decisions are taken at the beginning of each sub-period $[t^j, t^{j + 1})$,  $j=1,...,m$, and kept within it. 
The optimal portfolio is defined  by the vector
\[w = [ w_1^\top, w_2^\top, \dots, w_m^\top]^\top,\]
where $w_j \in \Re^s$ is the portfolio of holdings at time $t^j$ and $s$ is the number of assets.

The mean-variance formulation proposed by Markowitz \cite{Markowitz1959} was extended to a multi-period portfolio
selection by Li and Ng \cite{LiNg00}, and  in recent years there has been a significant advancement of
both theory and methodologies. In a multi-period mean-variance framework, we fix a final target expected return
and adopt as risk measure the function obtained by summing the single-period
variance terms \cite{ChenLiGuo13}:
\[ \rho(w) = \sum_{j=1}^{m}  w_j^\top C_j w_j, \] 
\noindent where $C_j \in \Re^{s\times s}$ is the covariance matrix, assumed to be positive definite,
estimated at~$t^j$.  
A common strategy to estimate Markowitz model parameters is to use historical data as predictive of the future behavior
of asset returns.  Different regularization techniques have been  proposed to deal with  ill-conditioning due to asset correlation; in the last years 
the $\ell_1$-regularization has been used to promote sparsity in the solution~\cite{corsaro2}. It allows investors to reduce the number of positions to
be monitored and held and the overall transaction costs.  Another useful interpretation of the $\ell_1$ norm is related to the
amount of short positions (i.e., negative components in the solution), which indicate an investment strategy where an investor is selling borrowed stocks in the open market, expecting that the market will drop, in order to realize a profit.
A suitable tuning of the regularization parameter permits short controlling in both the single- and the
multi-period case \cite{corsaro1, corsaro2}. However, in the multi-period case, the sparsity in the solution does not guarantee the control of the transaction 
costs, especially if the pattern of the active positions (i.e., positive components in the solution) completely changes across periods. In this case, sparsity must be introduced in the variation,
e.g., by adding an $\ell_1$ term involving the differences of the wealth values allocated on the assets between two contiguous rebalancing times.
This acts as a penalty on the portfolio turnover, which has the effect of reducing the number of transactions and hence the transaction
costs~\cite{corsaro3, DDV19}.

Thus, we consider the following fused lasso optimization problem for multi-period portfolio selection~\cite{DDV19}:
\begin{equation}\label{eq:FL}
    \begin{array}{rl}
        \displaystyle \min_{w} &  \frac{1}{2} w^\top C w+ \tau_1 \|w\|_1 + \tau_2  \|L w\|_1, \\
        \text{s.t.}    & w_1^\top e_s = \xi_{\mathrm{init}}, \\
        &w_j^\top e_s = (e_s+r_{j-1})^\top w_{j-1}, \quad j = 2,\ldots,m,\\
        &(e_s+r_{m})^\top w_m = \xi_{\mathrm{term}}, \\
    \end{array}
\end{equation}
where $n= m \, s$, $C = \diag(C_1, C_2, \ldots, C_m) \in \Re^{n \times n }$ is a block diagonal symmetric positive definite matrix,
$\tau_1, \tau_2>0$, $L \in \Re^{ (n-s)\times n}$ is the discrete difference operator representing the fused lasso regularizer, $r_j \in \Re^s$ is  the 
expected return vector at time $t^j$, $\xi_{\mathrm{init}}$ is the initial wealth, and $\xi_{\mathrm{term}}$
is the target expected wealth resulting from the overall investment. 
The first constraint is the initial budget constraint.
The strategy is assumed to be self-financing, as constraints from
$2$ to $m$ establish; this means that the value of the portfolio changes only because the asset prices change.
The $(m+1)$-st constraint defines the expected final wealth.
To deal with the non-separability of the objective function in \eqref{eq:FL}, we introduce an auxiliary variable $d$, which is constrained to be equal to $Lw$,
and we equivalently formulate problem~\eqref{eq:FL} as  follows:
\begin{equation}\label{eq:SFL} 
    \begin{array}{rl} 
         \displaystyle \min_{w, d} & \displaystyle \frac{1}{2} w^\top Cw+\tau_1 \|w\|_1+\tau_2\|d\|_1, \\
        \text{s.t.}& \bar A w = \bar{b},\\
        & Lw = d,
    \end{array}
\end{equation}
where the constraint matrix $\bar A \in  \Re^{(m+1) \times n}$ can be interpreted as an $(m+1) \times m$ 
lower bi-diagonal block matrix, with blocks of dimension $1 \times s$ defined by
\[ \bar A^{i,j}= \left \{
\begin{array}{cl}
    e_s^\top & \mbox{if } i=j, \\
    -(e_s +r_{i-1})^\top & \mbox{if } j=i+1, \\
    0_s^\top& \mbox{otherwise},
\end{array} \right. 
\]
and $\bar{b}=(\xi_{\mathrm{init}},0,0,...,\xi_{\mathrm{term}})^\top \in \Re^{m+1}$.


\subsection{Specialized IP-PMM for quadratic portfolio optimization problems}

Using the standard trick described in Section~\ref{sec:Intro}, we split $w$ and $d$ into two vectors  of the same size,
representing the non-negative and non-positive parts of the entries of 
$w$ and $d$ respectively, i.e., $w = w^+ - w^-$ and $d = d^+ - d^-$. Then, problem~\eqref{eq:SFL} is reformulated
as the following QP problem:
\begin{equation}\label{QPPort} 
        \min_{x} \ \frac{1}{2} x^\top Qx+ c^\top x, \quad \text{s.t.} \ \ Ax = b, \ \ x \geq 0
\end{equation}
where we set $l=n-s$, $\overline n = 2(n+l) = 2s(2m-1)$, $\overline m = m+1+l = (m+1)+s(m-1)$,
$$
x=[(w^+)^\top,(w^-)^\top,(d^+)^\top,(d^-)^\top]^\top \in \Re^{\overline n},
$$
\begin{equation}
    Q = \left[
    \begin{array}{cc}
      \begin{bmatrix}
         \phantom{-}C & -C \\ -C & \phantom{-}C
      \end{bmatrix} & 0_{2n,2l} \\
      0_{2l,2n} & 0_{2l,2l}\\
    \end{array} \right] \in \Re^{\overline n \times \overline n },
    \quad
    A = \left[
    \begin{array}{ccc}
        \bar{A} & -\bar{A} & 0_{(m+1),2l}\\
        L & -L & \begin{bmatrix}-I_l & I_l\end{bmatrix}\\
    \end{array} \right] \in \Re^{\overline m \times \overline n},
\end{equation}
$$
c = [\tau_1,\ldots,\tau_1,\tau_2,\ldots,\tau_2]^\top \in \Re^{\overline n}, \quad
b =[\bar{b}^1,\ldots,\bar{b}^{m+1},0,\ldots,0]^\top \in \Re^{\overline m}.
$$


\subsubsection{Dropping Primal Variables\label{sec:DroppingStrategy}}

The optimal solution of problem \eqref{eq:SFL} is expected to be sparse. On the other hand, in light of the reformulation given in~\eqref{QPPort}, we anticipate (and verify in practice) that most of the primal variables $w$ attain a zero value close to optimality. Such variables may significantly contribute to the ill conditioning of the matrix in \eqref{regularized augmented system} (see, e.g., \cite{iter:dAdSdS,IPM_25_years_later} and the references therein). In order to take advantage of this special property displayed by problem \eqref{QPPort}, we employ the following heuristic method, which aims at dropping variables $x^j$ which are sufficiently close zero, their seemingly optimal value. This results in better conditioning of the augmented system, whose dimension is also significantly reduced close to optimality, thus decreasing the computational cost of each IPM iteration. In other words, as IP-PMM progresses, we project the problem onto a smaller space.    
After this reduced problem is solved, its optimal solution is expanded back to the original space by filling all earlier eliminated variables with zeros. This delivers an optimal solution to the original problem.
\par In particular, we set a threshold value $\epsilon_\mathrm{drop} > 0$, and a large constant $\xi > 0$. At iteration $k = 0$, we define a set $\mathcal{V} = \emptyset$. Then, at every iteration $k$ of IP-PMM, we check the following condition, for every $j \in \mathcal{I}\setminus \mathcal{V}$:
\begin{equation}\label{portfolio regularized dropping condition}
    x_k^j \leq \epsilon_\mathrm{drop} \quad \text{and}\quad z_k^j \geq \xi \cdot \epsilon_\mathrm{drop} \quad \text{and}\quad (r_d)_k^j \leq \epsilon_\mathrm{drop},
\end{equation}
\noindent where $(r_d)_k^j = (c - A^\top y_k + Qx_k -z_k)^j$ represents the dual infeasibility corresponding to the $j$-th variable. Any variable that satisfies the latter condition is dropped, that is, we set $x_k^j = 0$, $\mathcal{V} = \mathcal{V} \cup \{j\}$, $\mathcal{G} = \mathcal{F}\cup (\mathcal{I}\setminus \mathcal{V})$, we drop $z_k^j$ and solve
\begin{equation}\label{portfolio regularized augmented system dropping}
    \begin{split}
        \begin{bmatrix}
            -(Q^{\mathcal{G},\mathcal{G}} + \Xi_k^{\mathcal{G},\mathcal{G}} + \rho_k I_{|\mathcal{G}|})  & (A^{\mathcal{H},\mathcal{G}})^\top\\
            A^{\mathcal{H},\mathcal{G}} & \delta_k I_{\overline{m}} 
        \end{bmatrix} \begin{bmatrix}
            {\Delta x^{\mathcal{G}}}\\
            \Delta y
        \end{bmatrix} = \begin{bmatrix}
            r_{1,k}^{\mathcal{G}}\\
            r_{2,k}
        \end{bmatrix},\end{split}
\end{equation}

\noindent where, $\Xi_k$ is defined as in Section \ref{sec:Interior Point-Proximal Method of Multipliers}, $r_{1,k},\ r_{2,k}$ are defined in \eqref{augmented system residuals} (by substituting $A^{\mathcal{H},\mathcal{G}}$ as the constraint matrix), and $\mathcal{H} = \{1, \ldots, \overline{m}\}$. We should note that this is a heuristic, since once a variable is dropped, it is not considered again until the method converges. Hence, one has to make sure that none of the nonzero variables $x_k^j$ is dropped. Nevertheless, at the end of the optimization process we can test whether the variables in $\mathcal{V}$ are indeed nonzero. More specifically, once an optimal solution $(x^*,y^*,z^*)$ is found, we compute:
$$z^{\mathcal{V}} = c^{\mathcal{V}} - (A^{\mathcal{H},\mathcal{V}})^\top y^* + Q^{\mathcal{V},\mathcal{G}}(x^*)^{\mathcal{G}}.$$
\noindent If there exists $j$ such that $(z^{\mathcal{V}})^j \leq 0$, then we would identify that a variable $x^j$ was incorrectly dropped. Otherwise, the optimal solution of the reduced problem coincides with the nonzero part of the optimal solution of \eqref{QPPort}. We notice that this methodology is not new. In particular, a similar strategy was employed in \cite{paper_Drop_Vars}, where a special class of linear programming problems was solved using a primal-dual logarithmic barrier method.


\subsection{Computational Experience}

We test the effectiveness of the IP-PMM applied to the fused lasso model on the following real-world data sets:
\begin{enumerate}
    \item FF48-FF100 (Fama \& French 48-100 Industry portfolios, USA), containing 48-100 portfolios considered as assets, from July 1926 to December 2015. 
    \item ES50 (EURO STOXX 50), containing 50 stocks from 9 Eurozone countries (Belgium, Finland, France, Germany, Ireland, Italy,
    Luxembourg, the Netherlands and Spain), from January~2008 to December~2013.
    \item FTSE100 (Financial Times Stock Exchange, UK), containing 100 assets, from July~2002 to April~2016.
    \item SP500 (Standard \& Poors, USA), containing 500 assets,  from January~2008 to December~2016.
    \item NASDAQC (National Association of Securities Dealers Automated
    Quotation Composite, USA), containing almost all stocks listed on the Nasdaq stock market, from February~2003 to April~2016.
\end{enumerate}
Following \cite{corsaro3,DesimoneAMC}, we generate 10 problems with annual or quarterly rebalancing, after a preprocessing
procedure that eliminates the elements with the highest volatilities. A rolling window (RW) for setting up the model parameters is considered.
For each dataset, the length of the RWs is fixed in order to build positive definite covariance matrices and ensure statistical significance.
Different datasets require different lengths for the RWs. In Table~\ref{tab:data} we summarize the information on the test problems.

\begin{table}[htbp]
    \begin{small}
        \caption{Characteristics of the portfolio test problems (y = years, m = months)}\label{tab:data} 
        \centerline{
            \begin{tabular}{l|rrrr}
                \toprule
                \textbf{Problem} & \textbf{Assets}& \textbf{RW} &\textbf{Sub-periods}  & $\overline n$ \\ \midrule
                FF48-10 & 48  & 5 y & 10 y & 1632 \\
                FF48-20 & 48  & 5 y & 20 y & 3552\\
                FF48-30 & 48  & 5 y &  30 y & 5472\\
                FF100-10 &   96& 10 y& 10 y & 3264 \\
                FF100-20 &  96 & 10 y& 20 y & 7104\\
                FF100-30 &   96 & 10 y &30 y & 10,944\\
                ES50 &  50   & 1 y & 22 m & 4300\\
                FTSE100 &  83 & 1 y& 10 y & 3154\\
                SP500&  431 & 2 y& 8 y & 11,206 \\
                NASDAQC& 1203 & 1 y & 10 y & 45,714\\ \bottomrule
            \end{tabular}
        }
    \end{small}
\end{table}

We introduce some measures to evaluate the goodness of the optimal portfolios versus the benchmark one, in terms of risk, sparsity and transaction costs.
We consider as benchmark the multi-period naive portfolio, based on the strategy for which at each rebalancing date the total wealth is equally divided among the assets. We assume that the investor has one unit of wealth at the beginning of the planning horizon, 
i.e., $\xi_{\mathrm{init}}=1$, and we set as expected final wealth the one provided by the benchmark. As in~\cite{corsaro3},
we define:
\begin{equation}\label{ratio}
    ratio = \frac{w^\top_{naive}Cw_{naive}}{w^\top_{opt}Cw_{opt}},
\end{equation}
where $w_{naive}$ and $w_{opt}$ are respectively the naive portfolio and the optimal one.  
This value measures the risk reduction factor with respect to the benchmark.
We consider the number of active positions as a measure of holding costs; then the value
\begin{equation}\label{ratio_h}
    ratio_{h} = \frac{\#  \mbox{ active positions of } w_{naive}}{ \#  \mbox{ active positions of }   w_{opt}}\end{equation}
measures the reduction factor of the holding costs with respect to the benchmark.
Finally, we consider the number  of variations in the weights as a measure of transaction costs.
More precisely, if $w_j^i \neq w_{j+1}^i$ we assume that security $i$ has been bought or sold
in the period $[t^j,t^{j+1})$. Then 
we estimate the number of transactions  as:
$${\mathcal T} = trace(V^\top V),$$
where  $V\in \Re^{s\times(m-1)}$, with
$$v^{ij} = \left\{ 
\begin{array}{ll}
    1 & \mbox{if } | w_{j}^i - w_{j+1}^i |\ge \epsilon,\\
    0 & \mbox{otherwise}.\\
\end{array}\right.$$
and $ \epsilon > 0$, in order to make sense in financial terms. 
A measure of the transaction reduction factor with respect to the benchmark is given by
\begin{equation}\label{ratio_t} ratio_{t} = \frac{ {\mathcal T}_{naive}}{ {\mathcal T}_{opt}}.
\end{equation}

\par We consider a version of the presented IP-PMM algorithm in which the solution of problem~\eqref{portfolio regularized augmented system dropping} is computed by means of factorization, the parameter $\epsilon_\mathrm{drop}$ controlling the heuristic described in Section \ref{sec:DroppingStrategy} is set to $10^{-4}$, and  the constant $\xi$, which is used to ensure that the respective dual slack variable is bounded away from zero, is set to $10^{2}$. We compare IP-PMM with the Split Bregman method, which is known to be very efficient for this kind of problems. In detail, we consider the
Alternating Split Bregman algorithm used in~\cite{DesimoneAMC}, based on a further reformulation of problem \eqref{eq:SFL} as
$$
\begin{array}{rl} 
    \displaystyle \min_{w,u,d} & \displaystyle \frac{1}{2} w^\top Cw+\tau_1 \|u\|_1+\tau_2\|d\|_1, \\
    \text{s.t.}& \bar A w = \bar b,\\
    & Lw = d,\\
    & w = u.
\end{array}
$$
This algorithm splits the minimization in three parts. Given $w_k, u_k, d_k$, the $(k + 1)$-st iteration 
consists in the minimization of a quadratic function to determine $w_{k+1}$ and the application of
the soft-thresholding operator
$$
[{\mathcal S}(v,\gamma)]^i= \sign(v^i)\cdot\max(|v^i|-\gamma, 0),
$$
where $v$ is a real vector and $\gamma >0$,
to determine $u_{k+1}$ and $d_{k+1}$.
%
%
The optimal value $w_{k+1}$ can be obtained by solving the system $H w = p_{k+1}$, with
\begin{equation}\label{eq:MatH}
    H = C+\lambda_1 {\bar{A}^\top \bar{A}}+\lambda_2 L^\top L+ \lambda_3 I,
\end{equation}
where $\lambda_1, \lambda_2, \lambda_3>0$ are fixed and $p_{k+1}$ depends on the iteration.
Since $H$ is independent of the iteration and is symmetric positive definite, sparse, and banded, in~\cite{DesimoneAMC} the authors compute
its sparse Cholesky factorization only once and solve two triangular systems at each iteration. We refer to this algorithm as ASB-Chol.

\begin{table}[!ht]
\centering
\caption{IP-PMM vs ASB-Chol} \label{Portfolio-Table}
{\small
    \begin{tabular}{l|r|r|r|r|r}
        \toprule
        &                            \multicolumn{5}{c}{\textbf{IP-PMM}}                            \\ \midrule
        \textbf{Problem} & \textbf{Time (s)} & \textbf{Iters} & \textbf{$ratio$} & \textbf{$ratio_h$} & \textbf{$ratio_t$} \\ \midrule
        FF48-10          &     1.37e$-$1 &           12 &        2.32e$+$0 &          6.67e$+$0 &          1.66e$+$1 \\
        FF48-20          &     3.77e$-$1 &           16 &        2.28e$+$0 &          6.58e$+$0 &          2.13e$+$1 \\
        FF48-30          &     8.43e$-$1 &           21 &        4.64e$+$0 &          6.15e$+$0 &          1.69e$+$1 \\
        FF100-10         &     4.92e$-$1 &           12 &        1.58e$+$0 &          1.78e$+$1 &          4.36e$+$1 \\
        FF100-20         &     1.63e$+$0 &           15 &        1.81e$+$0 &          2.04e$+$1 &          4.92e$+$1 \\
        FF100-30         &     3.93e$+$0 &           21 &        5.82e$+$0 &          1.34e$+$1 &          3.60e$+$1 \\
        ES50             &     4.59e$-$1 &           14 &        2.12e$+$0 &          4.42e$+$0 &          5.75e$+$1 \\
        FTSE100          &     4.64e$-$1 &           14 &        1.85e$+$0 &          5.37e$+$1 &          6.09e$+$1 \\
        SP500            &     3.43e$+$1 &           16 &        1.57e$+$0 &          8.62e$+$1 &          1.50e$+$2 \\
        NASDAQC          &     7.05e$+$2 &           20 &        3.15e$+$0 &          2.73e$+$0 &          3.89e$+$2 \\ \toprule
        &                           \multicolumn{5}{c}{\textbf{ASB-Chol}}                           \\ \midrule
        \textbf{Problem} & \textbf{Time (s)} & \textbf{Iters} & \textbf{$ratio$} & \textbf{$ratio_h$} & \textbf{$ratio_t$} \\ \midrule
        FF48-10          &     1.67e$-$1 &         1431 &        2.33e$+$0 &          6.67e$+$0 &          1.66e$+$1 \\
        FF48-20          &     3.72e$-$1 &         1985 &        2.31e$+$0 &          7.93e$+$0 &          2.09e$+$1 \\
        FF48-30          &     1.12e$+$0 &         4125 &        4.64e$+$0 &          6.08e$+$0 &          1.66e$+$1 \\
        FF100-10         &     8.49e$-$1 &         3087 &        1.58e$+$0 &          1.78e$+$1 &          4.36e$+$1 \\
        FF100-20         &     2.09e$+$0 &         3635 &        1.80e$+$0 &          1.78e$+$1 &          4.27e$+$1 \\
        FF100-30         &     8.54e$+$0 &         9043 &        5.83e$+$0 &          1.12e$+$1 &          2.97e$+$1 \\
        ES50             &     9.70e$-$1 &         4297 &        2.05e$+$0 &          2.94e$+$0 &          4.26e$+$1 \\
        FTSE100          &     4.29e$-$1 &         1749 &        1.80e$+$0 &          5.07e$+$1 &          5.71e$+$1 \\
        SP500            &     1.98e$+$1 &         3728 &        1.74e$+$0 &          6.16e$+$1 &          1.01e$+$2 \\
        NASDAQC          &     8.84e$+$2 &        14264 &        3.15e$+$0 &          2.73e$+$0 &          3.89e$+$2 \\ \bottomrule
    \end{tabular}
}
\end{table}

The values of $\tau_1$ and $\tau_2$ in~\eqref{eq:FL} are selected to guarantee
reasonable portfolios in terms of short positions. We recall that from the financial point of view,
negative solutions correspond to transactions in which an investor sells borrowed
securities in anticipation of a price decline. In our runs we consider the smallest values of $\tau_1$ and $\tau_2$
that produce at most $2 \%$ of short positions. We set $\tau_1=\tau_2=10^{-2}$ for the FF48 and FF100 data sets, 
$\tau_1=\tau_2=10^{-3}$ for ES50 and SP500, $\tau_1=10^{-2}$ and $\tau_2=10^{-3}$ for FTSE, and 
$\tau_1=10^{-2}$ and $\tau_2=10^{-4}$ for NASDAQC.

In Table~\ref{Portfolio-Table} we present the results obtained with IP-PMM and ASB-Chol on the test problems.
The termination criteria of IP-PMM are the same as in~\cite{paper_IP_PMM}, i.e., based on the relative reduction of the primal infeasibility $||Ax - b||$ (i.e. the constraints violation), the dual infeasibility $||\nabla f(x) - A^\top y - z||$, as well as complementarity (which is controlled by $\mu$). The stopping criterion for ASB-Chol is based only on the relative reduction of the primal feasibility {$||\bar{A}x - b||$}, which is a standard choice in literature.  The relative tolerance for the two algorithms is $tol = 10^{-6}$, which guarantees that the
values of $ratio$ differ by at most $10\%$, so that both algorithms produce comparable portfolios in terms of risk. We note that
the solution computed by ASB-Chol is thresholded by setting to zero all the entries with absolute value not exceeding the same value
of $\epsilon_\mathrm{drop}$ used in the IP-PMM dropping strategy. The results show that the optimal portfolios computed
by IP-PMM and ASB-Chol outperform the benchmark ones in terms of all the metrics. 
Concerning $ratio_h$ and $ratio_t$, IP-PMM is generally able to produce greater values than ASB-Chol, which indicates a higher sparsity in the solution found by IP-PMM. IP-PMM generally performs comparably or better than ASB-Chol in terms of elapsed time. Although ASB-Chol is faster than IP-PMM on SP500 by $14.5$ seconds ($42\%$), IP-PMM is able to reach a better solution in terms of sparsity.
When applied to FF100-30 IP-PMM produces a portfolio associated with lower transaction costs and takes less than half of the time required by ABS-Chol.
When applied to NASDAQC, which is the largest problem under consideration, the two methods reach comparable solutions in terms of all the evaluation metrics,
but IP-PMM needs about $20\%$ less time ($179$ seconds) than ASB-Chol. This suggests that the use of IP-PMM can be beneficial especially when solving high-dimensional problems.

\section{Classification models for functional Magnetic Resonance Imaging data\label{sec:fMRI}}

The functional Magnetic Resonance Imaging (fMRI) technique measures brain 
spatio-temporal activity via Blood-Oxygen-Level-Dependent (BOLD) signals. Starting from the assumption
that neuronal activity is coupled with cerebral blood flow, fMRI signals have been used to identify regions
associated with functions such as speaking, vision, movement, etc.. By analyzing the different oxigenation
levels in specific areas of the brain of healthy and ill patients, in the last decades fMRI signals have
been used to investigate the effect on the brain functionality of tumors, strokes, head and brain injuries
and of cognitive disorders such as schizophrenia or Alzheimer's (see \cite{dohmatob:2014,kamitani:2005,michel:2011}
and the references therein).

In an fMRI scan, voxels representing regions of the brain of a patient are recorded at different
time instances. The temporal resolution is usually in the order of a few seconds,
while the spatial resolution generally ranges from 4-5~mm (for some full brain analyses) to 1~mm
(for analyses on specific brain regions), which may amount up to about a million voxels.
Since fMRI experiments are conducted over groups of patients, the dimensionality of the data is
further increased. Therefore, the interpretation of fMRI results requires the analysis of huge
quantities of data. To this aim, machine learning techniques are being increasingly used
in recent years, because of their capability of dealing with massive amounts of data, incorporating
also a-priori information about the problems they are targeted to
\cite{baldassarre:2012,baldassarre:2017,dohmatob:2014,dubois:2014,gramfort:2013,michel:2011,rosa:2015}.

Here we focus on the problem of training a binary linear classifier to distinguish between
different classes of patients (e.g., ill/healthy) or different kinds of stimuli (e.g., pleasant/unpleasant),
and to get information about the most significant brain areas associated with the related neural activity.
The two classes are identified by the labels $-1$ and $1$. We assume that the training set consists of
$s_{-1}$ 3-dimensional (3d) scans in class $-1$ and $s_{1} $ 3d scans in class $1$, where each 3d scan is reshaped as a
row vector of size $q = q_1\times q_2\times q_3 $, and $q_i$ is the number of voxels along
the $i$-th coordinate direction of the domain covering the brain. All the scans are stored as rows
of a matrix $D \in \R^{s \times q}$, where $s = s_{-1} + s_{1}$.

\par We use a square loss function with the aim of determining
an unbiased hyperplane in $\Re^q$ that can separate the patients in the two classes.
This leads to a minimization problem of the form:
\begin{equation} \label{eq:fMRIProb}
    \min \, \frac{1}{2s} \left\| Dw - \hat{y} \right\|^2,
\end{equation}
where $\hat{y}$ is a vector containing the labels associated with each scan. {Notice that the use of the Euclidean loss is a standard practice in the literature for the classification of fMRI data (see, e.g., \cite{baldassarre:2017,gramfort:2013,Grosenicketal:2011,jieetal:2016,lietal:2020}). Nevertheless, it should be observed that different loss functions could be employed as well (e.g. see \cite{Ryalietal:2010,yamashita:2008}), potentially leading to better classification accuracy in certain cases.}
\par Since the number of patients is usually much smaller than the size of a scan, i.e., $s \ll q$,
problem~\eqref{eq:fMRIProb} is strongly ill posed and thus requires regularization.
Recently, significant attention has been given to regularization terms encouraging the presence of
structured sparsity, where smoothly varying nonzero coefficients of the solution are
associated with small contiguous regions of the brain. This is motivated by the possibility of obtaining
more interpretable solutions than those corresponding to other regularizers that do not promote
sparsity or lead to sparse solutions without any structure {(see~\cite{baldassarre:2017,gramfort:2013,lietal:2020} and
the references therein).} 

Structured sparsity can be promoted, e.g., by using a combination of $\ell_1$ and anisotropic
Total Variation (TV) terms~\cite{argyriou:2013}, which can be regarded as a fused lasso
regularizer \cite{tibshirani:2005}. The regularized problem reads
\begin{equation} \label{eq:fMRIRegProb}
    \min_{w} \ \frac{1}{2s} \left\| Dw - \hat{y} \right\|^2 + \tau_1\, \| w \|_1 + \tau_2\, \| L w \|_1,
\end{equation}
where $ \| L w \|_1 $ is the discrete anisotropic TV of $w$, i.e., $L  = [L_x^\top \;\, L_y^\top \;\, L_z^\top]^\top \in \Re^{l \times q}$
is the matrix representing first-order forward finite differences in the $x, y, z$-directions at each voxel.
By penalizing the difference between each voxel and its neighbors in each direction, one enforces
the weights of the classification hyperplane (which share the 3d structure of the scans) to assume similar
values for contiguous regions of the brain, thus leading to identify whole regions of the brain involved in the decision process.

\par The previous problem can be reformulated by introducing the variables $u = Dw$ and $d = Lw$ and applying the splitting
$$
   w = w^+ - w^-,\ d = d^+-d^-,\qquad  (w^+,w^-,d^+,d^-) \geq (0_q,0_q,0_l,0_l).
$$ 
\noindent Let $m = l+s$ and $n = s+ 2 q + 2 l$. Using the previous variables, \eqref{eq:fMRIRegProb} can be equivalently written as:
\begin{equation}  \label{fMRI_primal_IP_PMM_format}
    \begin{split}
        \min_x & \ \ \frac{1}{2}x^\top Qx + \ c^\top x, \\
        \text{s.t.} & \ \ Ax = b,\\
        &\ x_{\mathcal{I}} \geq 0,\ x_{\mathcal{F}}\ \text{free},\ \mathcal{I} = \{s+1,\ldots,n\},\ \mathcal{F} = \{1,\ldots,s\},
    \end{split}
\end{equation}
\noindent where $b = 0_{s+l} \in \mathbb{R}^m$,
$$
   x = [u^\top,(w^+)^\top,(w^-)^\top,(d^+)^\top,(d^-)^\top]^\top, \quad
   c = [-\frac{\hat{y}^\top}{s} ,\tau_1 e_w^\top,\tau_1 e_w^\top,\tau_2 e_d^\top, \tau_2 e_d^\top]^\top \in \mathbb{R}^n,
$$
and
\begin{equation} \label{fMRI_matrices_A_Q}
    \begin{split}
        Q = \begin{bmatrix}
            \frac{1}{s}I_s &0_{s,(n-s)}\\
            0_{(n-s),s} & 0_{(n-s),(n-s)}
        \end{bmatrix} \in \mathbb{R}^{n \times n},\quad
        A = \begin{bmatrix}
            -I_s & D & -D & 0_{s,l} & 0_{s,l}\\
            0_{l,s} & L & -L & -I_l & I_l
        \end{bmatrix} \in \mathbb{R}^{m\times n}.
    \end{split}
\end{equation}


\subsection{Specialized IP-PMM for Fused Lasso Least Squares}

\noindent Notice that problem \eqref{fMRI_primal_IP_PMM_format} is in the same form as \eqref{primal problem}. In what follows, we present a specialized inexact IP-PMM, suitable for solving unconstrained fused lasso least squares problems. The proposed specialized IP-PMM is characterized by the two following implementation details. 
Firstly, instead of factorizing system \eqref{regularized augmented system}, we employ an iterative method (namely, the Preconditioned Conjugate Gradient (PCG) method \cite{iter:Hestenes_Stiefel}) to solve system \eqref{regularized normal equations}. Secondly, as suggested in Section~\ref{sec:DroppingStrategy}, we take advantage of the fact that the optimal solution of problem \eqref{fMRI_primal_IP_PMM_format} is expected to be sparse, and use the heuristic approach that allows us to drop many of the variables of the problem, when the method is close to the optimal solution. 


\subsubsection{Solving the Newton System}

\noindent We focus on solving the normal equations in \eqref{regularized normal equations}. Let $k$ denote an arbitrary iteration of IP-PMM. We re-write the matrix in \eqref{regularized normal equations} without using the succinct notation introduced earlier:
\begin{equation} \label{fMRI normal equations matrix - succinct}
    M_k = \begin{bmatrix}
        M_{1,k} & M_{2,k}^\top\\
        M_{2,k} & M_{3,k}
    \end{bmatrix},
\end{equation}
\noindent where:
\begin{equation} \label{fMRI normal equations matrix parts}
    \begin{split}
        M_{1,k} = &\ \left((\tfrac{1}{s} + \rho_k)^{-1} + \delta_k\right) I_s + D\left((\Xi_{w^+,k}+\rho_k I_q)^{-1}+(\Xi_{w^-,k}+\rho_k I_q)^{-1}\right)D^\top, \\
        M_{2,k} = &\ L(\Xi_{w^+}+\rho_k I_q)^{-1} D^\top + L(\Xi_{w^-,k}+\rho_k I_q)^{-1}D^\top, \\
        M_{3,k} = &\ L\left((\Xi_{w^+,k}+\rho_k I_q)^{-1}+ (\Xi_{w^-,k}+\rho_k I_q)^{-1}\right)L^\top \\ 
        & + \left((\Xi_{d^+,k}+\rho_k I_l)^{-1} + (\Xi_{d^-,k}+\rho_k I_l)^{-1} + \delta_k I_l\right),
    \end{split}
\end{equation}
\noindent while 
$$\Xi_k^{\mathcal{I}} = \begin{bmatrix}
    \Xi_{w^+,k} & 0_{q,q} & 0_{q,l} & 0_{q,l}\\
    0_{q,q}           & \Xi_{w^-,k} & 0_{q,l} & 0_{q,l}\\
    0_{l,q}         & 0_{l,q} & \Xi_{d^+,k} & 0_{l,l} \\
    0_{l,q}        &  0_{l,q} & 0_{l,l} & \Xi_{d^-,k}
\end{bmatrix}, $$
\noindent and $\Xi_k$ is defined as in Section \ref{sec:Interior Point-Proximal Method of Multipliers}.
\par Notice that the matrix $D$ in \eqref{eq:fMRIProb} is dense, and hence we expect $M_{1,k}$ and $M_{2,k}$ in \eqref{fMRI normal equations matrix parts} to also be dense. On the other hand, $M_{3,k}$ remains sparse, and we know that $l \gg s$. As a consequence, the Cholesky factors of the matrix in \eqref{fMRI normal equations matrix - succinct} would 
inevitably contain dense blocks. Hence, it might be prohibitively expensive to compute such a decomposition. Instead, we solve the previous system using a PCG method. In order to do so efficiently, we must find an approximation for the coefficient matrix in \eqref{fMRI normal equations matrix - succinct}. Given the fact that $M_{3,k}$ is sparse, while $M_{1,k}$ and $M_{2,k}$ are dense, we would like to find an approximation for the dense blocks. A possible approach would be to approximate $D$ by a low-rank matrix. Instead, based on the assumption $l \gg s$, we can approximate $M_k$ by the following block-diagonal preconditioner:
\begin{equation} \label{fMRI normal equations preconditioner}
    P_k = \begin{bmatrix}
        M_{1,k} & 0_{s,l}\\
        0_{l,s} & M_{3,k}
    \end{bmatrix},\ \text{where},\ P_k^{-1} = \begin{bmatrix}
        M_{1,k}^{-1} & 0_{s,l} \\
        0_{l,s} & M_{3,k}^{-1}
    \end{bmatrix}.
\end{equation}
\par {We observe that $M_{3,k}$ is a matrix of the form $LRL^\top+B$, where $R$ and $B$ are positive definite diagonal matrices and $L$ comes from stacking three first-order forward finite-difference operators. It is easy to check that $M_{3,k}$ has a $3\times 3$ block structure in which the diagonal blocks are Symmetric Diagonally Dominant M-matrices (SDDM). One could hence build a diagonal preconditioner by exploiting specialized strategies recently developed for this class of matrices \cite{chen2021rchol,KyngSachIEEE,PengSpielman2014}. 
Nevertheless, we notice that $M_{3,k}$ does have a sparse Cholesky factor, due to the sparsity displayed in the discrete anisotropic TV matrix $L$, which in our experiments guaranteed good performance.} On the other hand, the Cholesky factor of $M_{1,k}$ is dense. However, computation and storage of this dense factor is possible, as we only need to perform $O(s^3)$ operations, and store $O(s^2)$ elements.

\paragraph{\normalfont \textbf{Spectral Analysis}\label{fMRI - Spectral Analysis}}

\noindent Let us now further support the choice of the preconditioner in \eqref{fMRI normal equations preconditioner} by performing a spectral analysis of the preconditioned system $R_k = P_k^{-1}M_k$. In the following, we write $\mathcal{A} \times \mathcal{B}$ to denote a vector space whose elements are vectors $[a^\top, b^\top]^\top$ with $a \in \mathcal{A}$ and $b \in \mathcal{B}$.
\begin{theorem} \label{fMRI spectral analysis theorem}
    Let $D \in \Re^{s \times q}$ be the matrix in \eqref{eq:fMRIProb}. 
    Let also $M_k  \in \Re^{(s+l) \times (s+l)}$ be the matrix defined in \eqref{fMRI normal equations matrix - succinct} and $P_k$ the preconditioner defined in \eqref{fMRI normal equations preconditioner}. Then, the preconditioned matrix $R_k = P_k^{-1}M_k$ has $l-\rank(D)$ eigenvalues $\lambda = 1$, whose respective eigenvectors form a basis for $\{0_s\}\times\{\text{Null}(M_{2,k}^\top)\}$. All the remaining eigenvalues of the preconditioned matrix satisfy $\lambda \in (\chi,1)\cup(1,2)$, where $\chi = \frac{\delta_k \rho_k }{\sigma_{\max}^2(A)+ \rho_k \delta_k}$, $\delta_k$, $\rho_k$ are the regularization parameters of IP-PMM and $A$ is defined in \eqref{fMRI_matrices_A_Q}.
\end{theorem}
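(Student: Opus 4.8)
The plan is to exploit the fact that both $M_k$ and its block-diagonal part $P_k$ inherit the regularized normal-equations structure. Writing $G_k := \nabla^2 f(x_k) + \Xi_k + \rho_k I_n$ and splitting $A = [A_1^\top\ A_2^\top]^\top$ into the two row blocks of \eqref{fMRI_matrices_A_Q} (with $A_1 \in \Re^{s \times n}$, $A_2 \in \Re^{l \times n}$), a direct computation shows $M_{1,k} = A_1 G_k^{-1}A_1^\top + \delta_k I_s$, $M_{3,k} = A_2 G_k^{-1}A_2^\top + \delta_k I_l$, $M_{2,k} = A_2 G_k^{-1}A_1^\top$, and $M_k = A G_k^{-1}A^\top + \delta_k I_{s+l}$. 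Since $P_k \succ 0$, the spectrum of $R_k = P_k^{-1}M_k$ equals that of the symmetric matrix $P_k^{-1/2}M_k P_k^{-1/2} = I_{s+l} + \left[\begin{smallmatrix} 0 & W^\top \\ W & 0 \end{smallmatrix}\right]$, where $W := M_{3,k}^{-1/2}M_{2,k}M_{1,k}^{-1/2}$. Hence every eigenvalue of $R_k$ is either $1$ or of the form $1 \pm \sigma_i$, with $\sigma_i$ a nonzero singular value of $W$.

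First I would account for the unit eigenvalues. Any vector $[0_s^\top, v^\top]^\top$ with $v \in \text{Null}(M_{2,k}^\top)$ satisfies $M_k[0_s^\top, v^\top]^\top = [0_s^\top,(M_{3,k}v)^\top]^\top$, so $R_k[0_s^\top, v^\top]^\top = [0_s^\top, v^\top]^\top$; these vectors span $\{0_s\}\times\text{Null}(M_{2,k}^\top)$, of dimension $l - \rank(M_{2,k})$. It then remains to show $\rank(M_{2,k}) = \rank(D)$. Writing $M_{2,k} = L\Phi_k D^\top$ with $\Phi_k := (\Xi_{w^+,k}+\rho_k I_q)^{-1}+(\Xi_{w^-,k}+\rho_k I_q)^{-1} \succ 0$ diagonal and invertible, the range of $L\Phi_k D^\top = L\,(\Phi_k\,\text{Row}(D))$ has dimension $\rank(D)$ because the only null direction of $L$ (the constant vector) does not meet $\Phi_k\,\text{Row}(D)$, and $D$ has full row rank. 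The latter also gives $\text{Null}(M_{2,k}) = \{0\}$, so no additional unit eigenvectors of the form $[x_1^\top, 0_l^\top]^\top$ arise, and the unit multiplicity is exactly $l - \rank(D)$.

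For the remaining (non-unit) eigenvalues I would use the generalized Rayleigh quotient. Setting $p := A_1^\top x_1$ and $q := A_2^\top x_2$ for $x = [x_1^\top, x_2^\top]^\top$, the identity $A^\top x = p+q$ yields
\begin{equation*}
    \lambda = \frac{x^\top M_k x}{x^\top P_k x} = 1 + \frac{2\,p^\top G_k^{-1} q}{p^\top G_k^{-1}p + q^\top G_k^{-1}q + \delta_k\|x\|^2},
\end{equation*}
because the numerator exceeds the denominator by precisely $2\,p^\top G_k^{-1}q$. Cauchy--Schwarz in the $G_k^{-1}$-inner product bounds $|2\,p^\top G_k^{-1}q| \le p^\top G_k^{-1}p + q^\top G_k^{-1}q$, giving $\lambda < 2$ at once. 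For the lower bound, $G_k \succeq \rho_k I_n$ together with $\sigma_{\max}(A_1),\sigma_{\max}(A_2) \le \sigma_{\max}(A)$ (row blocks of $A$) yields $p^\top G_k^{-1}p + q^\top G_k^{-1}q \le \rho_k^{-1}\sigma_{\max}^2(A)\|x\|^2$, whence
\begin{equation*}
    \lambda \ge \frac{\delta_k\|x\|^2}{p^\top G_k^{-1}p + q^\top G_k^{-1}q + \delta_k\|x\|^2} \ge \frac{\delta_k\rho_k}{\sigma_{\max}^2(A)+\rho_k\delta_k} = \chi.
\end{equation*}
Since these eigenvalues are $1 \pm \sigma_i \neq 1$, they lie in $(\chi,1)\cup(1,2)$ (strictness of the lower bound follows generically, as the chain of inequalities cannot be simultaneously tight because $G_k \succ \rho_k I_n$ on the relevant subspace).

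The main obstacle I anticipate is the lower bound: the decisive steps are rewriting both $M_k$ and $P_k$ through $A G_k^{-1}A^\top$ so that the off-diagonal coupling is isolated as a single cross term, and then recognizing that the $G_k^{-1}$-weighted Cauchy--Schwarz inequality, combined with $G_k \succeq \rho_k I_n$ and the sub-block singular-value inequality, reproduces the constant $\chi$ exactly. The rank identity $\rank(M_{2,k}) = \rank(D)$ is comparatively routine but requires care, since the finite-difference operator $L$ is not injective; one must verify that its one-dimensional null space does not intersect $\Phi_k\,\text{Row}(D)$, and that $D$ has full row rank so that the first block contributes no spurious unit eigenvalues.
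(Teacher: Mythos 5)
Your proposal is correct and reaches the same conclusions, but it differs from the paper's proof in two substantive ways. For the eigenvalue localization in $(0,1)\cup(1,2)$, the paper works with the generalized eigenproblem $M_k p = \lambda P_k p$, eliminates one block to obtain two coupled generalized eigenproblems of the form $M_{2,k}M_{1,k}^{-1}M_{2,k}^\top p_l = (\lambda-1)^2 M_{3,k}p_l$ (and its mirror), and invokes positive definiteness of the Schur complement of $M_k$ to force $(\lambda-1)^2<1$; your symmetric congruence $P_k^{-1/2}M_kP_k^{-1/2}=I+\bigl[\begin{smallmatrix}0 & W^\top\\ W & 0\end{smallmatrix}\bigr]$ with eigenvalues $1\pm\sigma_i(W)$ is an equivalent but cleaner packaging of the same fact. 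The genuinely different step is the lower bound: the paper obtains $\chi$ separately via the crude estimate $\lambda_{\min}(R_k)\ge\lambda_{\min}(M_k)/\lambda_{\max}(P_k)$ with $\lambda_{\min}(M_k)\ge\delta_k$ and $\lambda_{\max}(P_k)\le\sigma_{\max}^2(A)/\rho_k+\delta_k$, whereas you exploit the normal-equations structure $M_k=AG_k^{-1}A^\top+\delta_k I$ to isolate the coupling as the single cross term $2p^\top G_k^{-1}q$ in the Rayleigh quotient, and a $G_k^{-1}$-weighted Cauchy--Schwarz then yields the upper bound $2$ and the lower bound $\chi$ in one unified computation; this is arguably more transparent and makes the strictness discussion easier. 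You are also more careful than the paper about the multiplicity count: you correctly note that unit eigenvectors of the form $[x_1^\top,0_l^\top]^\top$ with $x_1\in\text{Null}(M_{2,k})$ must be excluded, which requires $\rank(M_{2,k})=s$, i.e.\ $D$ of full row rank --- a hypothesis the paper uses implicitly (its Case 2 count of $s+\rank(D)$ non-unit eigenvalues is only consistent under it). The one soft spot you share with the paper is the identity $\rank(M_{2,k})=\rank(L\Phi_kD^\top)=\rank(D)$: since $L$ has a nontrivial null space (the constant vector), this needs $\Phi_k^{-1}e_q\notin\text{Row}(D)$, which both you and the paper assert rather than prove; it holds generically for fMRI data but is not a consequence of the stated hypotheses.
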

\begin{proof} Let us consider the following generalized eigenproblem:
    $$ M_k p = \lambda P_k p.$$
    \noindent We partition the eigenvector $p$ as $p = [p_s^\top,p_l^\top]^\top$. Using \eqref{fMRI normal equations matrix - succinct}, and \eqref{fMRI normal equations preconditioner}, the eigenproblem can be written as:
    \begin{equation} \label{fMRI spectral analysis - eigenproblem}
        \begin{split}
            p_s + M_{1,k}^{-1}M_{2,k}^\top p_l = \lambda p_s\\
            M_{3,k}^{-1}M_{2,k} p_s + p_l = \lambda p_l.
        \end{split}
    \end{equation}
    \noindent From \eqref{fMRI normal equations matrix parts} and \eqref{fMRI normal equations preconditioner} we know that $M_k \succ 0$ and $P_k \succ 0$ and hence $\lambda > 0$. Let us separate the analysis in two cases.
    \par \textbf{Case 1:} $\mathbf{\lambda = 1}.$
    \noindent This is the case for every $p$ such that
    $$ p \in \mathcal{S}_k = \{0_s\}\times\{\text{Null}(M_{2,k}^\top)\}, $$
    \noindent which trivially satisfies \eqref{fMRI spectral analysis - eigenproblem} for $\lambda = 1$. Upon noticing that
    $$\text{dim}(\mathcal{S}_k) = \text{dim}(\text{Null}(M_{2,k}^\top)) = l -\rank(M_{2,k}^\top),$$
    \noindent we can conclude that $R_k = P_k^{-1}M_k$ has an eigenvalue $\lambda = 1$ of multiplicity $l-\rank(M_{2,k}^\top)$. The respective eigenvectors form a basis of $\mathcal{S}_k$. Notice also, from \eqref{fMRI normal equations matrix parts}, that $\rank(M_{2,k}^\top) = \rank(D) \leq s$.
    \par \textbf{Case 2:} $\mathbf{\lambda \neq 1.}$
    \noindent There are exactly $s+\rank(D)$ such eigenvalues. In order to analyze this case, we have to consider two generalized eigenproblems. On the one hand, from the first block equation in \eqref{fMRI spectral analysis - eigenproblem}, we have that:
    $$ p_s = \frac{1}{\lambda-1}M_{1,k}^{-1} M_{2,k}^\top p_l,$$
    \noindent and hence, substituting this in the second block equation of \eqref{fMRI spectral analysis - eigenproblem} gives the following generalized eigenproblem:
    \begin{equation} \label{lemma spectral analysis: equation for G_l}
        G_{l,k} p_l = \nu M_{3,k} p_l,
    \end{equation}
    \noindent where $ G_{l,k} = M_{2,k} M_{1,k}^{-1}M_{2,k}^\top$ and $\nu = (\lambda-1)^2$. By assumption $\lambda \neq 1$, and hence $\lambda = \pm\sqrt{\nu} + 1$. However, we have $M_k \succ 0$ and hence $M_{3,k} - M_{2,k} M_{1,k}^{-1}M_{2,k}^\top \succ 0$. Let us assume that the maximum eigenvalue of $M_{3,k}^{-1}G_{l,k}$ is greater than or equal to $1$, i.e., $\nu_{\max} \geq 1$. By substituting $\nu_{\max}$ in \eqref{lemma spectral analysis: equation for G_l} and multiplying both sides of the previous inequality by $p_l^\top$, we get
    $$ p_l^\top G_{l,k} \, p_l = \nu_{\max} \, p_l^\top M_{3,k} \, p_l $$
 \noindent and hence
    $$ p_l^\top (G_{l,k} - M_{3,k}) \, p_l \geq 0. $$  
    \noindent The latter contradicts the fact that $M_{3,k} - G_{l,k} \succ 0$, and thus $\nu_{\max} < 1$. In other words, $\lambda \in (0,1)\cup(1,2)$.   
    \par Similarly, starting from the second block equation of \eqref{fMRI spectral analysis - eigenproblem}, we get
    $$ p_l = \frac{1}{\lambda-1}M_{3,k}^{-1}M_{2,k} p_s, $$
    \noindent and substituting this in the first block equation in \eqref{fMRI spectral analysis - eigenproblem} yields
    $$ G_{s,k} p_s = \nu M_{1,k} p_s, $$
    \noindent where $G_{s,k} = M_{2,k}^\top M_{3,k}^{-1}M_{2,k}$ and $\nu = (\lambda - 1)^2$. As before, we have that $\lambda = \pm\sqrt{\nu} + 1$. Using the fact that $M_{1,k} - M_{2,k}^\top M_{3,k}^{-1} M_{2,k} \succ 0$, we can mirror the previous analysis to conclude that $\nu_{\max} < 1$, and hence $\lambda \in (0,1)\cup(1,2)$. 
    \par Finally, notice that as long as the primal and dual regularization parameters of IP-PMM, $\rho_k$ and $\delta_k$ respectively, are bounded away from zero, so are the eigenvalues of $R_k = P_k^{-1}M_k$, for every iteration $k$ of the algorithm. In particular, we have that
    $$ \lambda_{\min}(R_k) \geq \frac{\lambda_{\min}(M_k)}{\lambda_{\max}(P_k)} \geq \frac{\delta_k \rho_k }{\sigma_{\max}^2(A)+ \rho_k \delta_k} = \chi,$$
    \noindent where we used the fact that $\lambda_{\min}(M_k) \geq \delta_k$ and $\lambda_{\max}(P_k) \leq \frac{\sigma^2_{\max}(A)}{\rho_k} + \delta_k$, where $A$ is defined in \eqref{fMRI_matrices_A_Q}. 
\end{proof}


\subsubsection{Dropping Primal Variables\label{sec:dropping-fmri}}

The preconditioner \eqref{fMRI normal equations preconditioner} may be computed 
(and applied) very efficiently as we expect the Cholesky factor of $M_{3,k}$ 
to preserve sparsity and $M_{1,k} \in \Re^{s \times s}$ to be relatively 
small (recall that $s \ll l$). 
We deduce from Theorem~\ref{fMRI spectral analysis theorem} that 
the preconditioner defined in \eqref{fMRI normal equations preconditioner} 
remains effective as long as the regularization parameters $\rho_k$ and $\delta_k$ 
are not too small. However, to attain convergence of IP-PMM $\rho_k$ and $\delta_k$ 
have to be reduced and then, due to the nature of IPMs, the matrix 
in \eqref{regularized normal equations} becomes increasingly ill conditioned 
as the method approaches the optimal solution. 
This implies that the preconditioner defined in \eqref{fMRI normal equations preconditioner} has only a limited applicability. 
In particular, this means that there is a limited scope for refining it 
and we may not be able to prevent degrading behaviour of PCG when IPM 
gets very close to the optimal solution. 


\par However, we notice that the optimal solution of problem \eqref{eq:fMRIRegProb} is expected to be sparse. Like in the portfolio optimization problem, in light of the reformulation \eqref{fMRI_primal_IP_PMM_format}, we know that most of the primal variables $x$ converge to zero. 
Close to optimality the presence of such variables would adversely affect 
the conditioning of the matrix in \eqref{regularized normal equations}. 
To prevent that, we employ a heuristic similar to the one introduced 
in Section~\ref{sec:DroppingStrategy} which consists of eliminating 
variables which approach zero and have an associated Lagrange multiplier 
bounded away from zero. 
%
Given $\epsilon_\mathrm{drop} > 0$, $\xi > 0$ and $\mathcal{V} = \emptyset$, 
at every iteration $k$ of IP-PMM, we add to $\mathcal{V}$ each variable 
$j \in \mathcal{I}\setminus \mathcal{V}$ satisfying 
condition~\eqref{portfolio regularized dropping condition}, and
%
%
replace \eqref{regularized normal equations} with the reduced system
\begin{multline} \label{fMRI regularized normal equations: dropped vars}
    \left(A^{\mathcal{H},\mathcal{G}} \left(Q^{\mathcal{G},\mathcal{G}} + \Xi_k^{\mathcal{G},\mathcal{G}} + \rho_k I_{|\mathcal{G}|}\right)^{-1} (A^{\mathcal{H},\mathcal{G}})^\top + \delta_k I_m \right) \Delta y \\
    = r_{2,k}+ A^{\mathcal{H},\mathcal{G}}\left( Q^{\mathcal{G},\mathcal{G}} + \Xi_k^{\mathcal{G},\mathcal{G}} + \rho_k I_{|\mathcal{G}|} \right)^{-1} r_{1,k}^{\mathcal{G}},
\end{multline}
\noindent where $\mathcal{H} = \{1,\ldots,  m\}$, $\mathcal{G} = \mathcal{F}\cup (\mathcal{I}\setminus \mathcal{V})$, and $r_{1,k}$, $r_{2,k}$ are defined in \eqref{augmented system residuals} (with constraint matrix $A^{\mathcal{H},\mathcal{G}}$).


\subsection{Computational Experience}

We consider a dataset consisting of fMRI scans for 16 male healthy US college students (age 20 to 25), with the aim of
analyzing two active conditions: viewing unpleasant and pleasant images \cite{mouraomiranda:2006}. The preprocessed
and registered data\footnote{available from \url{https://github.com/lucabaldassarre/neurosparse}} consist of $1344$ scans of size $122,\! 128$ voxels (only voxels with probability greater than 0.5 of being in the gray matter are considered), with $42$ scans considered per subject and active condition (i.e., $84$ scans per subject in total).

In order to assess the performance of the IP-PMM on this type of problems, we carry out a comparison with two state-of-the-art algorithms for the solution of problem \eqref{eq:fMRIRegProb}:
\begin{itemize}
    \item FISTA. As done for the tests in \cite{baldassarre:2017}, problem \eqref{eq:fMRIRegProb} is reformulated as
    \begin{equation*} 
        \min_{w} \; \frac{1}{2s} \left\| Dw - \hat{y} \right\|^2 + \| \hat{L} w \|_1,
    \end{equation*}
    where $\hat{L}=\left[\tau_1 I_q \ \  \tau_2 L^\top\right]^\top$, and solved by a version of FISTA \cite{beck:2009FISTA} in which the proximal operator associated with $ \| \hat{L} w \|_1$ is approximated by 10 steps of an inner FISTA cycle.
    \item ADMM. We consider the ADMM method \cite{boyd:2011admm} applied to the problem 
    \begin{equation*} 
        \begin{array}{rl}
            \displaystyle \min_{w, u, d}   & \displaystyle \frac{1}{2s} \left\| Dw - \hat{y} \right\|^2 + \tau_1\, \| u \|_1 + \tau_2\, \| d \|_1,\\
            \text{s.t.}  & w - u = 0_q,\\
            & Lw - d = 0_l,
        \end{array}
    \end{equation*}
    in which the minimization of the quadratic function associated with the update of $w$ is approximated by 10 steps of the CG algorithm.
\end{itemize}

In Table~\ref{tab:fMRI_classification_scores} we show the results obtained by applying the algorithms to the solution of the fMRI data classification problem. For each choice of the pair of regularization parameters $(\tau_1,\tau_2)$, we report the average results obtained in a Leave-One-Subject-Out (LOSO) cross-validation test over the full dataset of patients. This consists in using the data concerning 1 patient as the validation set and the data concerning the remaining patients as the training set. Because of this setting, for each problem the size of $w$ is $q = 122,\! 128$, 
the number of rows of $D$ is $s = 1260$, and the dimension of $d = Lw$ is $l = 339,\! 553$.

By preliminary experiments the choice $\tau_1 = \tau_2$ appeared the most appropriate. Furthermore, for the IP-PMM, the parameters $\epsilon_\mathrm{drop}$ and $\xi$ controlling the heuristic described in Section~\ref{sec:dropping-fmri} are set to $10^{-6}$ and $10^{2}$, respectively. To perform a fair comparison between the three algorithms, we consider a stopping criterion based on the execution time, which, after some preliminary tests, is fixed to 30 minutes. The solution of the normal equations system \eqref{fMRI regularized normal equations: dropped vars} is computed by the MATLAB \texttt{pcg} function, for which we set the maximum number of iterations to 2000 and the tolerance as
$$
tol = \left\{\begin{array}{ll}
    10^{-4} & \mbox{if } \|r_{y,k}\|<1, \\
    \max \left\{10^{-8},\;\frac{10^{-4}}{\|r_{y,k}\|} \right\} & \mbox{otherwise,}
\end{array}\right.
$$
where $r_{y,k}$ is the right-hand side of equation \eqref{fMRI regularized normal equations: dropped vars}.

For each algorithm tested, we report the mean and the standard deviation for three quality measures of the solution: \textit{classification accuracy} (ACC), \textit{solution density} (DEN) and \textit{corrected pairwise overlap} (CORR OVR) (see \cite[Section~2.3.3]{baldassarre:2017}). Let $N_f$ be the number of folders in the cross validation setting and let $w_i$ be a given approximate solution to the problem associated with the $i$-th folder. For each $w_i$ we define the accuracy (ACC) as the percentage of test vectors correctly classified by the linear model identified by $w_i$. Given a vector $v\in\Re^q$, we define $\mathcal{Z}(v)$ as the set of indices corresponding to the nonzero components in $v$ and $\mathcal{D}(v) = |\mathcal{Z}(v)|\big/ q$ as the density of $v$. Hence, for each $w_i$ the density (DEN) is computed as $\mathcal{D}(w_i)$. Finally, given any pair of indices $i,j\in\{1,\ldots,N_f\}$, the corrected pairwise overlap is defined as

$$ \mathcal{O}^c_{i,j} = \frac{|\mathcal{Z}(w_i)\cap\mathcal{Z}(w_j)| - E}{\max\{|\mathcal{Z}(w_i)|,\,|\mathcal{Z}(w_j)|\}},$$

\noindent where $E$ is the expected overlap between the support of two
random vectors with density equal to $\mathcal{D}(w_i)$ and $\mathcal{D}(w_j)$, respectively, which is given by $E = q\,\mathcal{D}(w_i)\,\mathcal{D}(w_j)$.
We observe that the corrected pairwise overlap, which may be the less common in the field of machine learning, is meant to measure the ``stability'' of the voxel selection. The three metrics are computed after thresholding the solution, as in \cite{baldassarre:2017}: after sorting the entries by their increasing magnitude, we set to zero the entries contributing at most to $0.01\%$ of the $\ell_1$-norm of the solution.

\begin{table}[!ht]
    \centering
    \caption{Comparison of IP-PMM, FISTA and ADMM in terms of the LOSO cross-validation scores\label{tab:fMRI_classification_scores}}
    {\small
    \begin{tabular}{l|r|r|r|r}
        \toprule
        \textbf{Algorithm}           &   $ \tau_1=\tau_2 $ & \multicolumn{1}{c|}{\textbf{ACC}} &  \multicolumn{1}{c|}{\textbf{DEN}} &  \multicolumn{1}{c}{\textbf{CORR OVR}} \\ \midrule
        \multirow{3}{*}\textbf{IP-PMM} & $ 10^{-2} $ & $86.16 \pm  7.11$ & $20.56 \pm  6.63$ & $43.47 \pm\hspace{5.5pt}  9.09$ \\
        & $ 5 \cdot 10^{-2} $ & $84.90 \pm  4.80$ & $ 3.77 \pm  0.84$ & $62.70 \pm 10.39$ \\
        & $ 10^{-1} $ & $82.29 \pm  6.22$ & $ 2.49 \pm  0.34$ & $82.60 \pm\hspace{5pt}  9.24$ \\ \midrule
         \multirow{3}{*}\textbf{FISTA}  & $ 10^{-2} $ & $86.90 \pm  5.01$ & $88.97 \pm  0.71$ & $ 5.43 \pm\hspace{5.5pt}  0.43$ \\
        & $ 5 \cdot 10^{-2} $ & $84.15 \pm  5.92$ & $19.36 \pm  0.86$ & $65.50 \pm\hspace{5.5pt}  2.68$ \\
        & $ 10^{-1} $ & $81.62 \pm  7.58$ & $ 5.14 \pm  0.44$ & $80.44 \pm\hspace{5.5pt}  5.72$ \\ \midrule
        \multirow{3}{*}\textbf{ADMM}   & $ 10^{-2} $ & $86.46 \pm  6.91$ & $98.70 \pm  0.03$ & $ 0.03 \pm\hspace{5.5pt}  0.01$ \\
        & $ 5 \cdot 10^{-2} $ & $85.57 \pm  5.37$ & $97.97 \pm  0.05$ & $ 0.15 \pm\hspace{5.5pt}  0.04$ \\
        & $ 10^{-1} $ & $82.07 \pm  6.51$ & $97.50 \pm  0.19$ & $ 0.26 \pm\hspace{5.5pt}  0.13$ \\ \bottomrule
    \end{tabular}
    }
\end{table}

By looking at Table~\ref{tab:fMRI_classification_scores}, one can see that IP-PMM appears to be generally better than the other algorithms in enforcing the structured sparsity of the solution, presenting a good level of sparsity and overlap. It is worth noting that, because of its definition, the corrected pairwise overlap tends to zero as the density goes towards $100\%$. Hence, for ADMM, which seems to be unable to enforce sparsity in the solution, the overlap is close to zero. As suggested in \cite{baldassarre:2017}, one can evaluate the results in terms of the distance of the pair (ACC, CORR OVR) from the pair $ (100,100) $ (the smaller the distance, the better the results). For the tests reported in the table, we can see that the best scores are obtained by IP-PMM with regularization parameters $\tau_1=\tau_2= 10^{-1}$, for which the average accuracy is $82.3\%$ and the corrected overlap is $82.6\%$ with an average solution density of $2.5\%$. 

\begin{figure}[h!]
    \centering
    \includegraphics[width=0.5\textwidth]{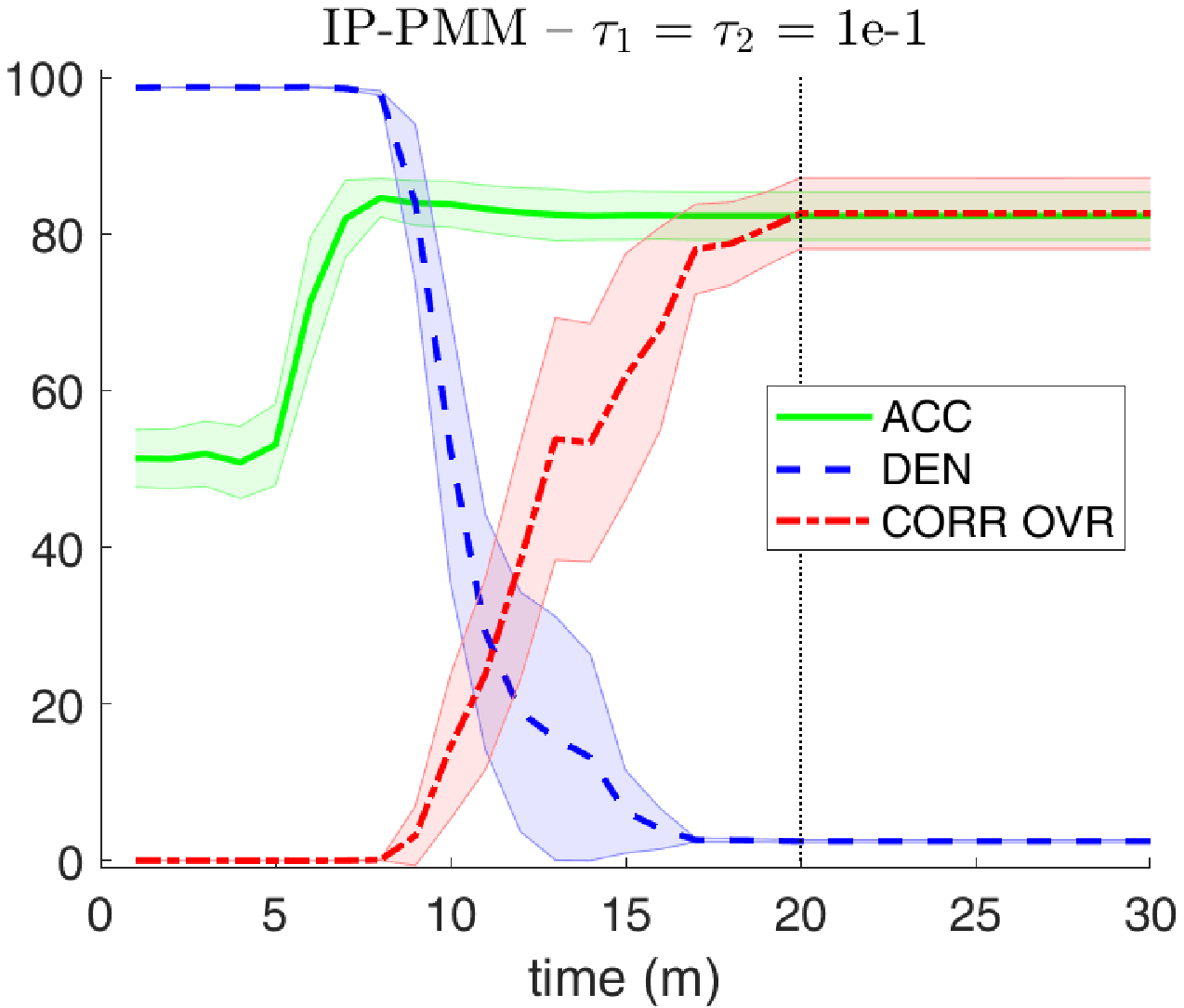}\includegraphics[width=0.5\textwidth]{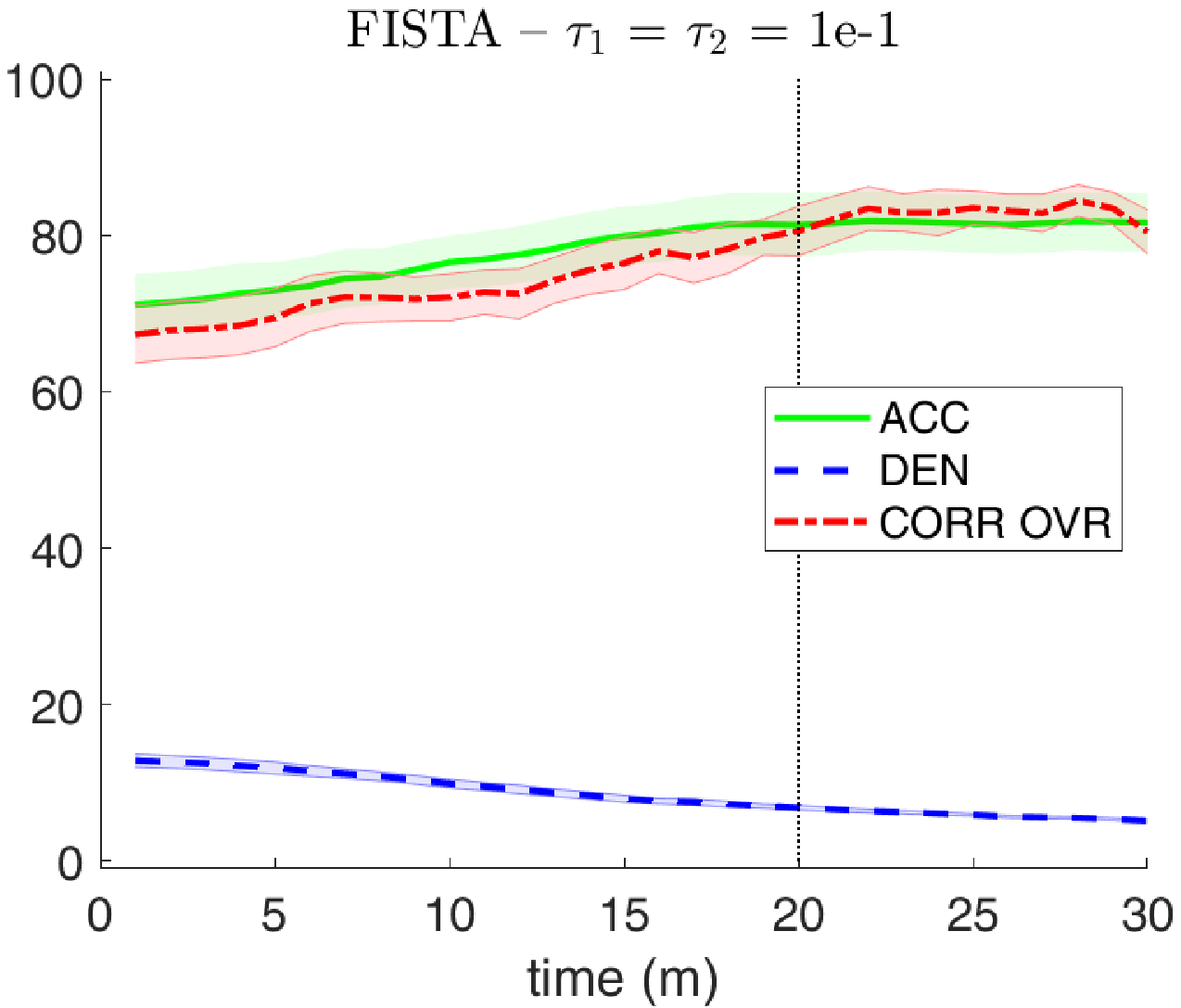}
    \vskip -9pt
    \caption{History of classification accuracy, solution density and corrected pairwise overlap for IP-PMM (\textit{left}) and FISTA (\textit{right}), in the case $\tau_1=\tau_2=10^{-1}$. For the three quantities we report average measures with $95\%$ confidence intervals. \label{fig:fMRI_time_comparison}}
\end{figure}
To further evaluate the efficiency of IP-PMM in the solution of this class of problems, we compare its performance in terms of elapsed time against the performance of FISTA on the problem where the two methods reach the best scores, i.e., with $\tau_1=\tau_2=10^{-1}$. For all the 16 instances of the LOSO cross validation, we store the current solution of each algorithm after every minute and, at the end of the execution, we compute the three quality measures for such intermediate solutions. The results are shown in Figure~\ref{fig:fMRI_time_comparison} in terms of history of the mean values (lines) together with their $95\%$ confidence intervals (shaded regions). From the plots we can see that while FISTA reaches the measures reported in Table~\ref{tab:fMRI_classification_scores} at the end of the 30-minute run, the performance of IP-PMM stabilizes after about 20 minutes. At the 20 minutes mark we observe that for IP-PMM the value of each of the three metrics is the same as the one reported in Table~\ref{tab:fMRI_classification_scores}. For FISTA, while the accuracy ($81.32\%$) and overlap ($80.54\%$) have similar values as those reported in the table, we observe a larger density ($6.83\%$).


\section{TV-based Poisson Image Restoration\label{sec:ImgRestor}}
\par Next we consider the restoration of images corrupted by Poisson noise, which arises in many applications, such as fluorescence microscopy, computed tomography (CT) and astronomical imaging (see, e.g., \cite{DiSerafinoLandiViola_TV_Poisson_restor} and the references therein). 
In the discrete formulation of the restoration problem, the object to be restored is represented by a vector
$w \in \Re^n$ and the measured data are assumed to be a vector $g \in \mathbb{N}_0^m$, whose entries $g^j$
are samples from $m$ independent Poisson random variables $G^j$ with probability
$$
    P(G^j = g^j) = \frac{e^{-(D w + a)^j}\left[(D w + a)^{j}\right]^{g^j}}{g^j!},
$$
where $a \in \Re_+^{m}$ models the background radiation detected by the sensors. The matrix $D = (d^{ij}) \in \Re^{m \times n}$ models the functioning of the imaging system and satisfies 
\begin{equation*}
     d^{ij} \geq 0 \mbox{ for all }  i, j, \qquad \sum_{i=1}^m d^{ij} = 1 \mbox{ for all } j.
\end{equation*}
Here we assume that $D$ represents a convolution operator with periodic boundary conditions, which implies that $D$ has a Block-Circulant structure with Circulant Blocks (BCCB). Hence, $D w$ is computed expeditiously using the 2-dimensional Fast Fourier Transform (FFT).
The maximum-likelihood approach \cite{bertero:2009} for the estimation of $u$ leads to the minimization of the \textit{Kullback-Leibler (KL) divergence} of $Dw+a$ from $g$:
\begin{equation} \label{eq:kl}
    D_{KL}(w) \equiv  D_{KL}(D w + a, g) = 
    \sum_{j=1}^m \left( g^j \ln \frac{g^j}{(D w + a)^j} + (D w + a)^j - g^j \right),
\end{equation}
where we set $g^j \ln (g^j / (D w + a)^j) = 0 $ if $g^j = 0$ (we implicitly assume that $g$ has been converted into a real vector with entries ranging in the same interval as the entries of $w$). Since the estimation problem is highly ill conditioned, a regularization term is added to \eqref{eq:kl}. We consider the Total Variation (TV)~\cite{rudin:1992}, which has received considerable attention because of its ability of preserving edges and smoothing flat areas of the images. Notice that, while it may introduce staircase artifacts, TV is still applied in many medical and biological applications (see, e.g., \cite{Barnard:2018, Mota:2016, Zhang:2018} and J. Huang's webpage\footnote{\url{http://ranger.uta.edu/\textasciitilde huang/R_CSMRI.htm}}).
The feasible set of the problem is defined by non-negativity constraints on the image intensity and the linear constraint $\sum_{i=1}^{n} w^i = \sum_{j=1}^{m} (g^j-a^j) \equiv r$ which guarantees preservation of the total intensity of the image.

The resulting model is
\begin{equation} \label{eqn:poispb}
    \begin{array}{cl}
        \displaystyle \min_{w} & \displaystyle D_{KL}(w)  + \lambda \| L w \|_1 \\
        \text{s.t.} & e_n^\top w =  r, \\
        & w \ge 0,
    \end{array}
\end{equation}
where $L \in \Re^{l \times n}$ is the matrix arising from the discretization of the TV functional (as in \cite{Chambolle_TV_discretization}).


\subsection{Specialized IP-PMM for Image Restoration Problems}
\par By employing the splitting strategy used in the previous sections, we can transform problem \eqref{eqn:poispb} to the following equivalent form:
\begin{equation} \label{eqn:smooth_poispb}
    \begin{array}{cl}
        \displaystyle \min_{x} & \displaystyle f(x) \equiv D_{KL}(w)  + c^\top u, \\
        \mbox{s.t.} & A x = b,\\
        & x \geq 0,  \\
    \end{array}
\end{equation}
\noindent where, after introducing the additional constraint $d=Lw$, and letting $\overline{m} = l+1$, $\overline{n} = n + 2l$, we set $x = [w^\top,\ u^\top]^\top \in \Re^{\overline{n}}$, $u = [(d^+)^\top,\ (d^-)^\top]^\top \in \Re^{2l}$, $c = \lambda\,e_{2l}$, $b = [r, \ 0_{l}^\top]^\top \in \mathbb{R}^{\overline{m}}$,  and 
\begin{equation*}
    A = \begin{bmatrix}
        e_n^\top & 0_l^\top & 0_l^\top\\
        L & - I_l & I_l
    \end{bmatrix} \in \mathbb{R}^{\overline{m}\times\overline{n}}.
\end{equation*}
\par We solve problem \eqref{eqn:smooth_poispb} by using IP-PMM combined with a perturbed composite Newton method \cite{SIAMJOpt:TapZhaSalWei}. 
Following the presentation in Section \ref{sec:Interior Point-Proximal Method of Multipliers}, we know that at the $k$-th iteration of the method we have to solve two linear systems of the form of \eqref{regularized augmented system}. In order to avoid factorizations, every such system is solved using the preconditioned MINimal RESidual (MINRES) method \cite{iter:PS}. In order to accelerate the convergence of MINRES, we employ a block-diagonal preconditioner, which uses a diagonal approximation of $\nabla^2 f(x)$. More specifically, at iteration $k$ of IP-PMM, 
we have the following coefficient matrix:
\[M_{k} = \begin{bmatrix}
    -H_{k} & A^\top \\
    A & \delta_k I_{\overline{m}} \\
\end{bmatrix}, \]
\noindent where $H_{k} = (\nabla^2 f(x_{k}) + \Theta_{k}^{-1} + \rho_k I_{\overline{n}})$, and we precondition it using the matrix
\begin{equation} \label{poisson_block_diag_preconditioner}
   \widetilde{M}_{k} = \begin{bmatrix}
        \widetilde{H}_{k} & 0_{\overline{n},\overline{m}} \\
        0_{\overline{m},\overline{n}} & A \,\widetilde{H}_{k}^{-1} A^\top + \delta_k I_{\overline{m}}\\
    \end{bmatrix},
\end{equation}
\noindent where $\widetilde{H}_{k}$ is a diagonal approximation of $H_k$. In order to analyze the spectral properties of the preconditioned matrix, we follow the developments in \cite{paper_IP_PMM_inexact}. More specifically, we define $\widehat{H}_{k} \coloneqq\widetilde{H}^{-\frac{1}{2}}_{k} H_{k}\widetilde{H}^{\frac{1}{2}}_{k}$, and let:
\[\alpha_H = \lambda_{\min}(\widehat{H}_{k}),\qquad \beta_H = \lambda_{\max}(\widehat{H}_{k}), \qquad \kappa_H = \frac{\beta_H}{\alpha_H}. \]
\noindent Using this notation, we know that an arbitrary element of the numerical range of this matrix is represented as $\gamma_{H} \in W(\widehat{H}_{k}) = [\alpha_H,\beta_H]$. Furthermore, we observe that in the special case where $\widetilde{H}_k = \diag(H_k)$, we have  $\alpha_H \leq 1 \leq \beta_H$ since 
\[\frac{1}{n+2l} \sum_{i=1}^{n+2l} \lambda_i (\widehat{H}_{k}^{-1}H_{k,j}) = \frac{1}{n+2l} \, \mathrm{Tr}(\hat{H}_{k}^{-1}H_{k}) = 1.\]
\begin{theorem} \label{thm:spectral_analysis_block_diagonal_preconditioner}
    Let $k$ be an arbitrary IP-PMM iteration. Then, the eigenvalues of $\widetilde{M}_{k}^{-1} M_{k}$ lie in the union of the following intervals: 
    \[I_{-} = \bigg[-\beta_H - 1, -\alpha_H \bigg], \qquad I_{+} = \bigg[ \frac{1}{1+\beta_H}, 1\bigg]. \]
\end{theorem}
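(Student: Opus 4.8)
The plan is to study the spectrum of $\widetilde{M}_{k}^{-1}M_{k}$ through a symmetric congruent reduction. Since $\widetilde{H}_{k}\succ 0$ and the Schur block $S_k \coloneqq A\widetilde{H}_{k}^{-1}A^\top + \delta_k I_{\overline m}\succ 0$, the preconditioner $\widetilde{M}_{k}$ is symmetric positive definite, so $\widetilde{M}_{k}^{-1}M_{k}$ is similar to $\widehat{M}_{k}\coloneqq \widetilde{M}_{k}^{-1/2}M_{k}\widetilde{M}_{k}^{-1/2}$ and all its eigenvalues are real (and nonzero, as $M_k$ is a nonsingular regularized saddle-point matrix). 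Introducing the symmetric matrix $\widehat{H}\coloneqq\widetilde{H}_{k}^{-1/2}H_{k}\widetilde{H}_{k}^{-1/2}$, whose spectrum coincides with that of $\widetilde{H}_{k}^{-1}H_k$ and hence lies in $[\alpha_H,\beta_H]$, together with $\widehat{A}\coloneqq S_k^{-1/2}A\widetilde{H}_{k}^{-1/2}$, a direct computation gives
\[
\widehat{M}_{k} = \begin{bmatrix} -\widehat{H} & \widehat{A}^\top \\ \widehat{A} & \delta_k S_k^{-1}\end{bmatrix}, \qquad \delta_k S_k^{-1} = I - \widehat{A}\widehat{A}^\top ,
\]
where the identity follows from $\widehat{A}\widehat{A}^\top = S_k^{-1/2}(S_k-\delta_k I)S_k^{-1/2}$, so that $T\coloneqq\widehat{A}\widehat{A}^\top$ satisfies $0\preceq T\prec I$. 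Writing an eigenvector as $[u^\top,v^\top]^\top$, the eigenproblem reads $-\widehat{H}u+\widehat{A}^\top v=\lambda u$ and $\widehat{A}u+(I-T)v=\lambda v$. I would first dispose of the degenerate cases: $[0;v]$ with $v\in\mathrm{Null}(\widehat{A}^\top)$ gives $\lambda=1\in I_+$, and $[u;0]$ with $u\in\mathrm{Null}(\widehat{A})$ gives $-\widehat{H}u=\lambda u$, i.e.\ $\lambda\in[-\beta_H,-\alpha_H]\subset I_-$.

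For the positive branch I would assume $\lambda>0$ and $u,v\neq 0$. Then $\widehat{H}+\lambda I\succ 0$ is invertible, so $u=(\widehat{H}+\lambda I)^{-1}\widehat{A}^\top v$; substituting into the second equation and testing with $v$ yields, with $w=\widehat{A}^\top v$ and $\rho\coloneqq w^\top(\widehat{H}+\lambda I)^{-1}w/\|w\|^2\in[\tfrac{1}{\beta_H+\lambda},\tfrac{1}{\alpha_H+\lambda}]$, the scalar relation $(\rho-1)\|w\|^2=(\lambda-1)\|v\|^2$. Because $\lambda>1$ would force $\rho<1$ while $(\lambda-1)\|v\|^2>0$, the relation is inconsistent, so $\lambda\le 1$. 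For $\lambda\in(0,1)$ the relation forces $\rho<1$ and, using $\|w\|^2=v^\top T v\le\|v\|^2$, gives $\rho\le\lambda$; combining $\lambda\ge\rho\ge\tfrac{1}{\beta_H+\lambda}$ with $\lambda\le 1$ (whence $\tfrac{1}{\beta_H+\lambda}\ge\tfrac{1}{1+\beta_H}$) yields $\lambda\ge\tfrac{1}{1+\beta_H}$, i.e.\ $\lambda\in I_+$.

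For the negative branch I would assume $\lambda<0$ and $u,v\neq 0$. Then $(\lambda-1)I+T\prec 0$ is invertible, so $v=((\lambda-1)I+T)^{-1}\widehat{A}u$; substituting into the first equation and testing with $u$ gives, with $w=\widehat{A}u\in\mathrm{Range}(T)$,
\[
\lambda\|u\|^2 + u^\top\widehat{H}u = w^\top\big((\lambda-1)I+T\big)^{-1}w \le 0,
\]
and since $u^\top\widehat{H}u\ge\alpha_H\|u\|^2$ this already forces $\lambda\le-\alpha_H$ (the upper end of $I_-$). For the lower end, the restriction of $((\lambda-1)I+T)^{-1}$ to $\mathrm{Range}(T)$ has eigenvalues bounded below by $1/\lambda$, so the displayed form is $\ge\|w\|^2/\lambda$; multiplying by $\lambda<0$, using $\|w\|^2=\|\widehat{A}u\|^2\le\|u\|^2$ (as $\widehat{A}^\top\widehat{A}\preceq I$) and $u^\top\widehat{H}u\le\beta_H\|u\|^2$, I obtain $\lambda^2+r\lambda\le 1$ with $r\in[\alpha_H,\beta_H]$. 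A short sign argument (if $\lambda<-\beta_H-1$ then $\lambda+r<-1$, so $\lambda^2+r\lambda=\lambda(\lambda+r)>\beta_H+1>1$, a contradiction) rules out $\lambda<-\beta_H-1$, giving $\lambda\in I_-$.

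The main obstacle I anticipate is not the algebra of the two eliminations but ensuring they are carried out under the correct invertibility regime for each sign of $\lambda$ ($\widehat{H}+\lambda I\succ 0$ when $\lambda>0$; $(\lambda-1)I+T\prec 0$ when $\lambda<0$) and, crucially, restricting the Rayleigh-quotient estimates of $(\widehat{H}+\lambda I)^{-1}$ and $((\lambda-1)I+T)^{-1}$ to $\mathrm{Range}(\widehat{A}^\top)$ and $\mathrm{Range}(\widehat{A})=\mathrm{Range}(T)$ respectively, which is where the key identity $\delta_k S_k^{-1}=I-\widehat{A}\widehat{A}^\top$ enters. The sharp quadratic inequalities one obtains are in fact slightly stronger than the stated endpoints, so the final—easier—step is to relax them to the clean bounds $\tfrac{1}{1+\beta_H}$ and $-\beta_H-1$ by invoking $\lambda\le 1$ and $T,\widehat{A}^\top\widehat{A}\preceq I$.
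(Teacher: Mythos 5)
Your argument is correct. Note, however, that the paper does not actually contain a proof of this theorem: its entire ``proof'' is a one-line deferral to Theorem~3.3 of the cited inexact IP-PMM reference. What you have produced is therefore a self-contained reconstruction of that external result rather than a variant of an in-paper argument, and it follows the standard route for block-diagonal preconditioners of regularized saddle-point systems: symmetrization by $\widetilde{M}_k^{-1/2}$, the structural identity $\delta_k S_k^{-1} = I - \widehat{A}\widehat{A}^\top$ (which is precisely what makes the $(2,2)$ block controllable and yields the clean endpoints $1$ and $\frac{1}{1+\beta_H}$), and the two eliminations with Rayleigh-quotient estimates. I checked each step and the bounds all hold: in particular the inconsistency argument ruling out $\lambda>1$, the chain $\lambda\ge\rho\ge\frac{1}{\beta_H+\lambda}\ge\frac{1}{1+\beta_H}$ on $(0,1)$, the immediate bound $\lambda\le-\alpha_H$ from negativity of $\bigl((\lambda-1)I+T\bigr)^{-1}$, and the quadratic inequality $\lambda(\lambda+r)\le 1$ excluding $\lambda<-\beta_H-1$. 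Two cosmetic points only: in the positive branch you should remark that $w=\widehat{A}^\top v\neq 0$ (otherwise $u=0$, contradicting the case assumption), so that $\rho$ is well defined; and the ``degenerate cases'' are more safely phrased as the cases $u=0$ or $v=0$ of an arbitrary eigenvector (the first block equation then forces $v\in\mathrm{Null}(\widehat{A}^\top)$, respectively the second forces $u\in\mathrm{Null}(\widehat{A})$), rather than as exhibiting eigenvectors of that special form, which need not exist. Neither point affects the conclusion.
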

\begin{proof}
    The proof follows exactly the developments in \cite[Theorem 3.3]{paper_IP_PMM_inexact}.
\end{proof}

In problem \eqref{eqn:smooth_poispb}, $f(x) = D_{KL}(w) + c^\top u$ and hence
\[\nabla f(x) = \begin{bmatrix}
    \nabla D_{KL} (w)\\
    c
\end{bmatrix}, \qquad \nabla^2 f(x) = \begin{bmatrix}
    \nabla^2 D_{KL}(w) & 0_{n,2l}\\
    0_{2l,n} & 0_{2l,2l}
\end{bmatrix}, \]
\noindent where 
\[\nabla D_{KL}(w) = D^\top \left( e_m - \frac{g}{Dw+a} \right), \qquad \nabla^2 D_{KL}(w) = D^\top U(w)^2 D,\]
\noindent with $U(w) = \diag \left( \frac{\sqrt{g}}{Dw+a} \right)$. Here the ratios and the square root are assumed to be component-wise. Notice that $D$ might be dense; however, as previously noted, its action can be computed via the FFT. Unfortunately, $D^\top U(w)^2 D$ is not expected to be close to multilevel circulant. Even if it could be well-approximated by a multilevel circulant matrix, the scaling matrix of IP-PMM would destroy this structure. In other words, we use the structure of $D$ only when applying it to a vector. As a result, we only store the first column of $D$ and we use the FFT to apply this matrix to a vector. This allows us to compute the action of the Hessian easily.

\begin{remark}
    The obvious choice would be to employ the approximation $\widetilde{H}_k = \diag(H_k)$, but the structure of the problem makes this choice rather expensive. A more efficient alternative is to use $\widetilde{H}_k = U(w_k)^2$,  which is easier to compute and, as we will see in the following section, leads to good reconstruction results in practice.
\end{remark}

\subsection{Computational Experience\label{sec:esperiments_poisson_restoration}}
To evaluate the performance of the IP-PMM on this class of problems, we consider a set of three $256\times 256$ grayscale images, which are presented in Figure~\ref{fig:kltv_original_images}.
\begin{figure}[h!]
    \centering
    \includegraphics[width=0.3\textwidth]{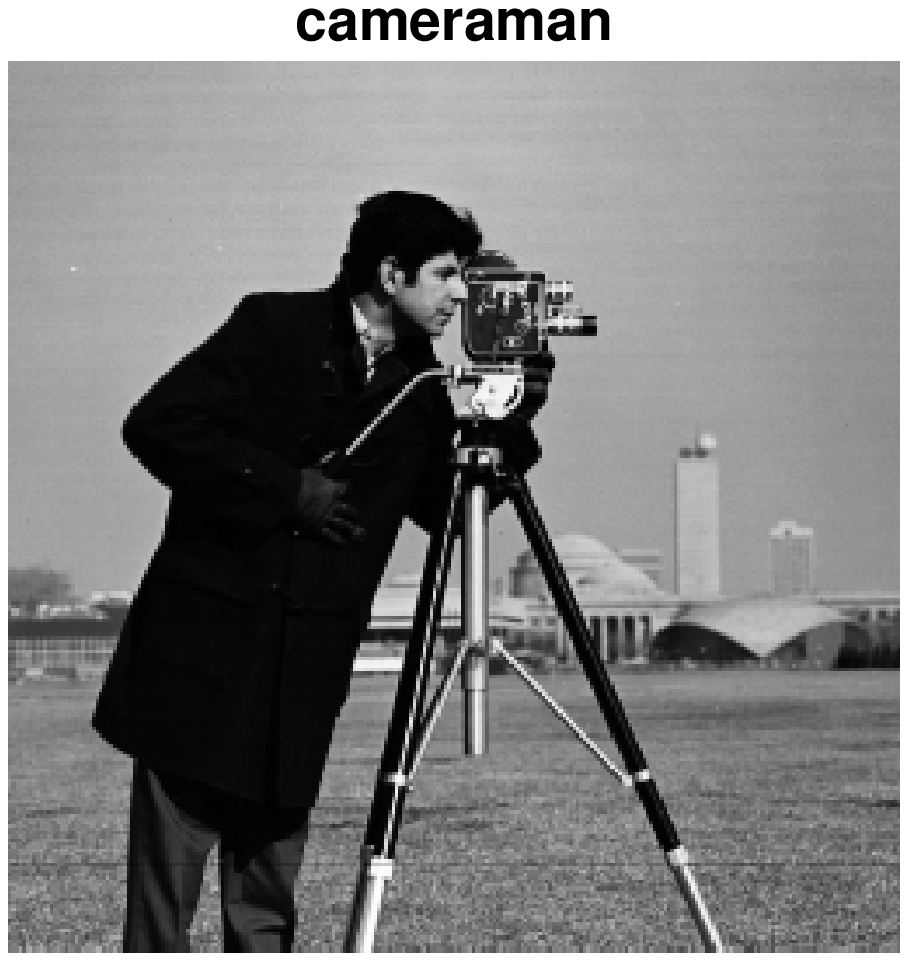} \hfill \includegraphics[width=0.3\textwidth]{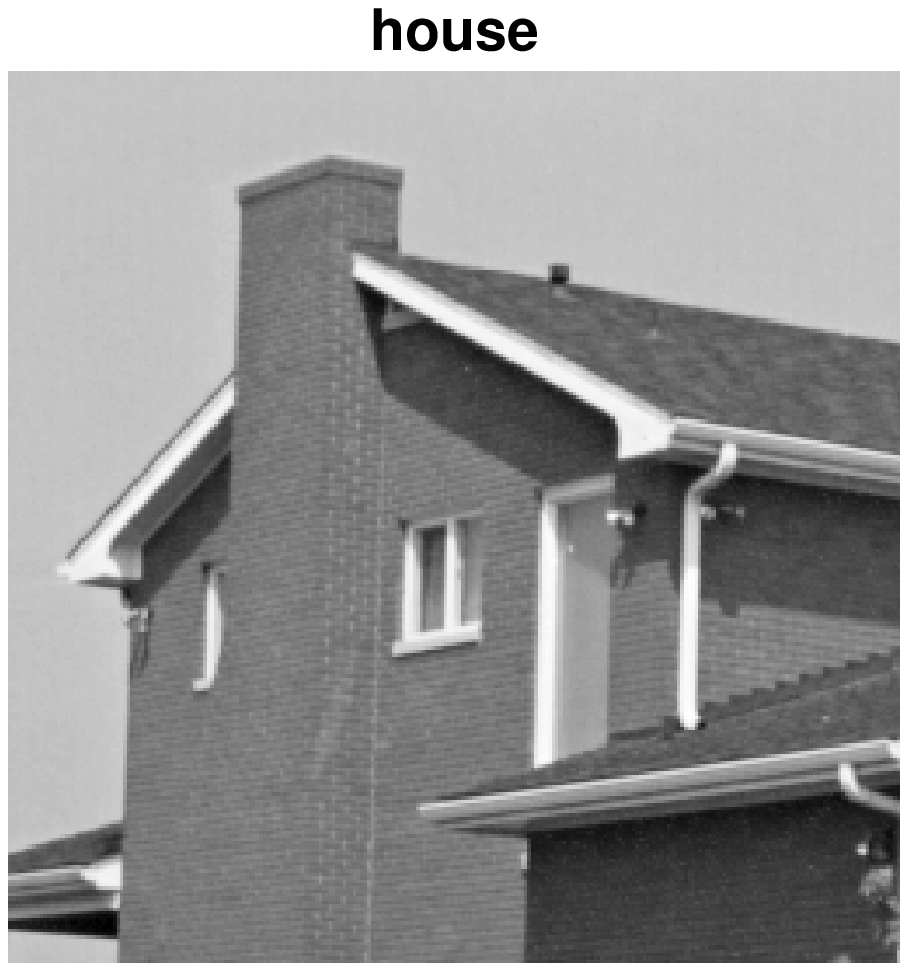} \hfill
    \includegraphics[width=0.3\textwidth]{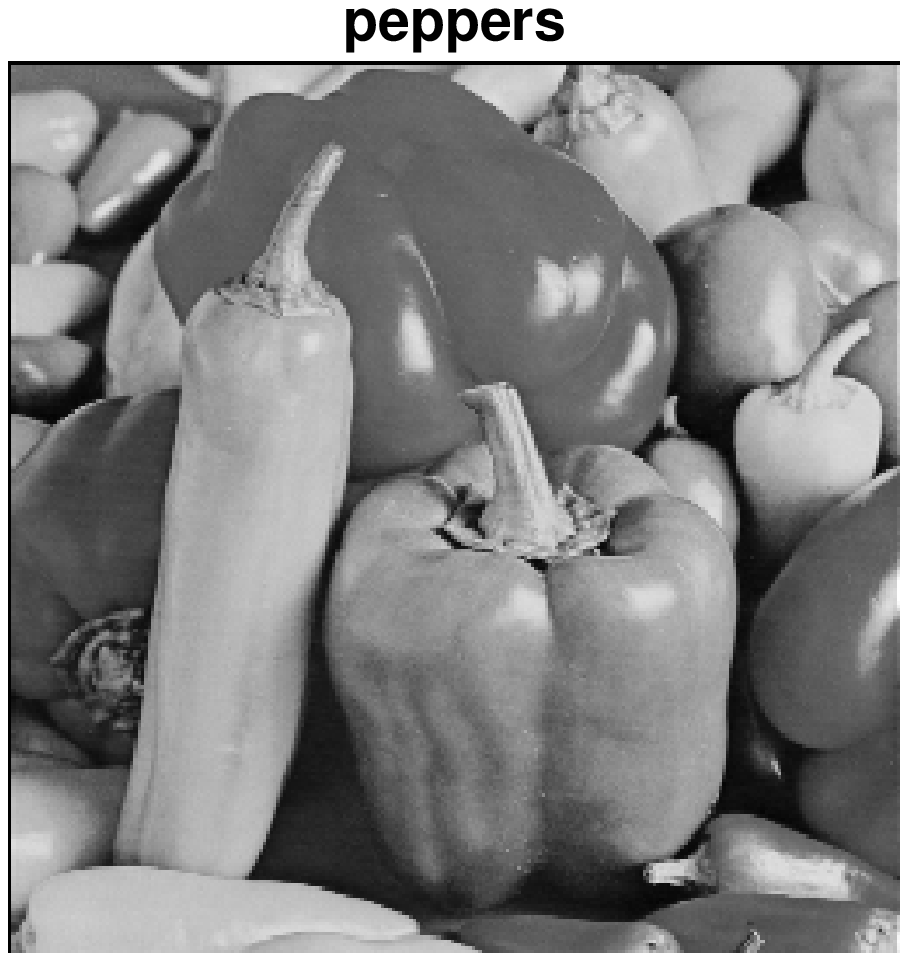}
    \caption{The three $256\times 256$ grayscale images of the image restoration tests. \label{fig:kltv_original_images}}
\end{figure}
For each of the three images we set up three restoration tests, where the images are corrupted by Poisson noise and $D$ represents one of the following blurs: Gaussian blur (GB), motion blur (MB), and out-of-focus blur (OF) (see, e.g., \cite{hansen:2006} for further details).

We compare the proposed method with the state-of-the-art Primal-Dual Algorithm with Linesearch (PDAL) proposed in \cite{malitsky:2018}. By following the example of \cite[Algorithm~2]{wen:2016}, problem \eqref{eqn:poispb} is reformulated as
\begin{equation}
    \min_w \,\max_{p,y} \; g^\top \ln(1+y) - y^\top (D w+ a) - \lambda \, w^\top L^\top p + \chi_\infty(p) + \chi_\mathcal{C}(w),
    \label{eqn:kltv_pdal_reformulation}
\end{equation}
where $\chi_\infty$ is the characteristic function of the $\infty$-norm unit ball and $\chi_\mathcal{C}$ the characteristic function of the feasible set $\mathcal{C}$ of problem \eqref{eqn:poispb}. It is worth noting that the PDAL algorithm for the solution of problem \eqref{eqn:kltv_pdal_reformulation} requires at each step a projection on the feasible set $\mathcal{C}$, which is performed here by using the secant algorithm proposed by Dai and Fletcher in~\cite{dai:2006a}. Concerning the parameters of PDAL, we use the same notation and tuning as in~\cite{malitsky:2018}. Following~Section~6 of that paper, we set $ \mu=0.7$, $\delta=0.99$ and $\beta = 25$. The initial steplength is $ \tau = \sqrt{1/\omega} $, where $\omega$ is an estimate of $\|M^\top M\|$ and $M = \left[ D^\top \; L^\top \right]^\top $ is the matrix linking the primal and dual variables. In the IP-PMM, we use the MINRES code by Michael Saunders and co-workers\footnote{available from \url{https://web.stanford.edu/group/SOL/software/minres/}} for which we set the relative tolerance $tol=10^{-4}$ and the maximum number of iterations at each call equal to 20. The regularization parameter $\lambda$ is determined by trial and error to minimize the Root Mean Square Error (RMSE) obtained by IP-PMM. We recall that, denoting the original image as $\bar{w} \in \Re^n$, for any given approximate solution $w \in \Re^n$ we have that
$$ \mathrm{RMSE}(w) = \frac{1}{\sqrt{n}}\|w-\bar{w}\|_2.$$
For all the problems, the starting point is chosen to be the noisy and blurry image, i.e., $g$.

\begin{figure}[h!]
    \centering
    \includegraphics[width=0.31\textwidth]{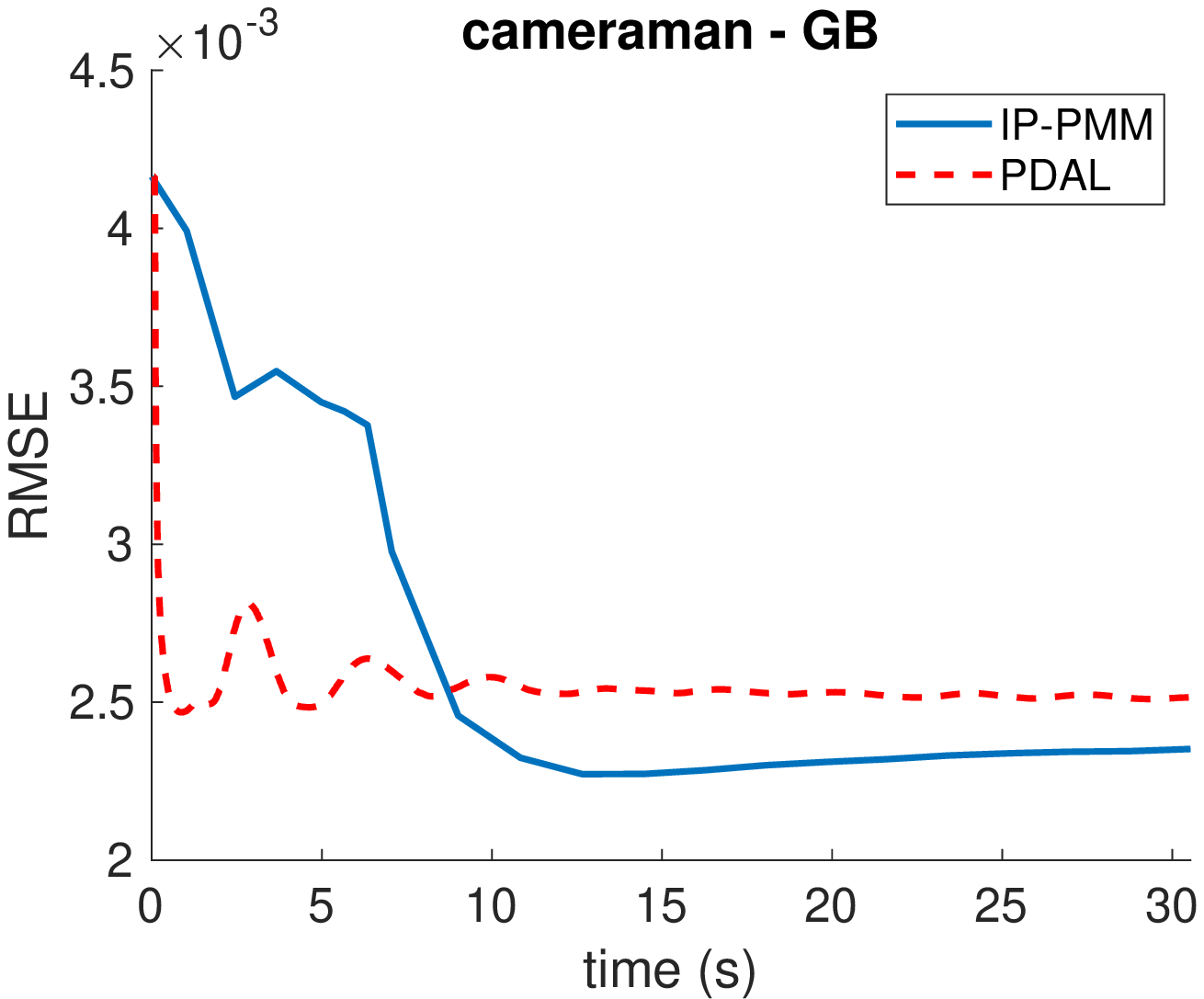}\hfill
    \includegraphics[width=0.31\textwidth]{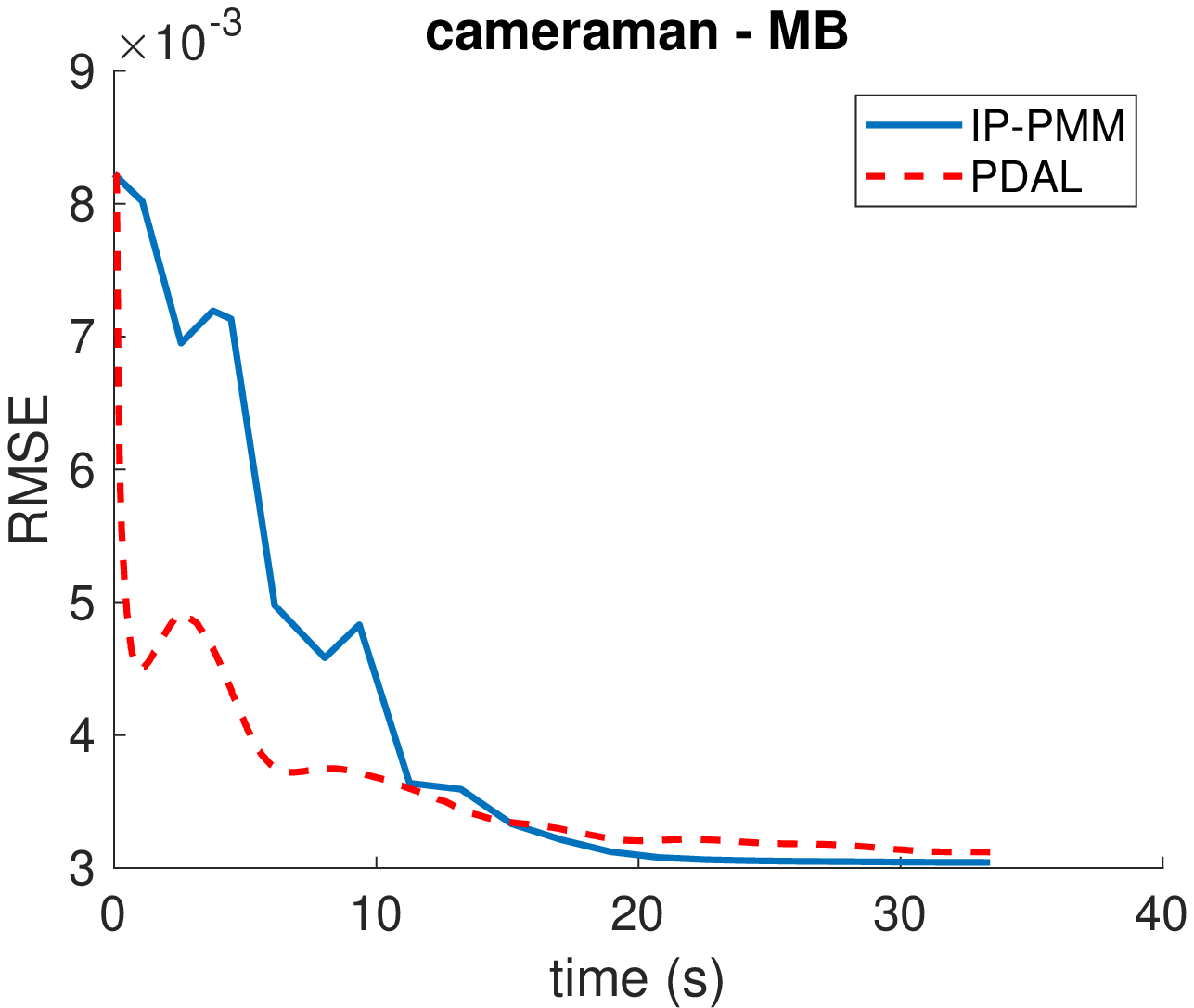}\hfill
    \includegraphics[width=0.31\textwidth]{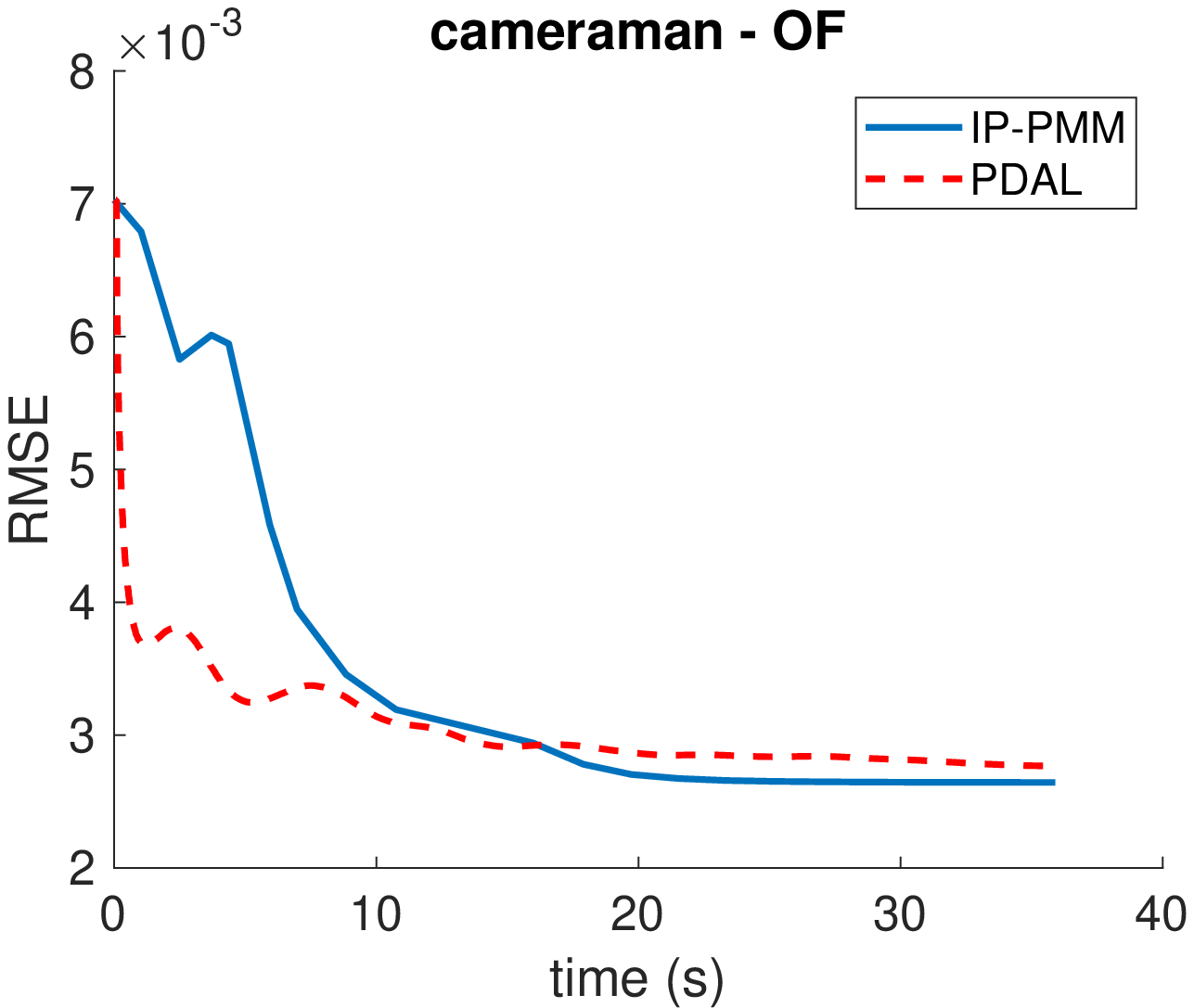}\\[1mm]
    \includegraphics[width=0.31\textwidth]{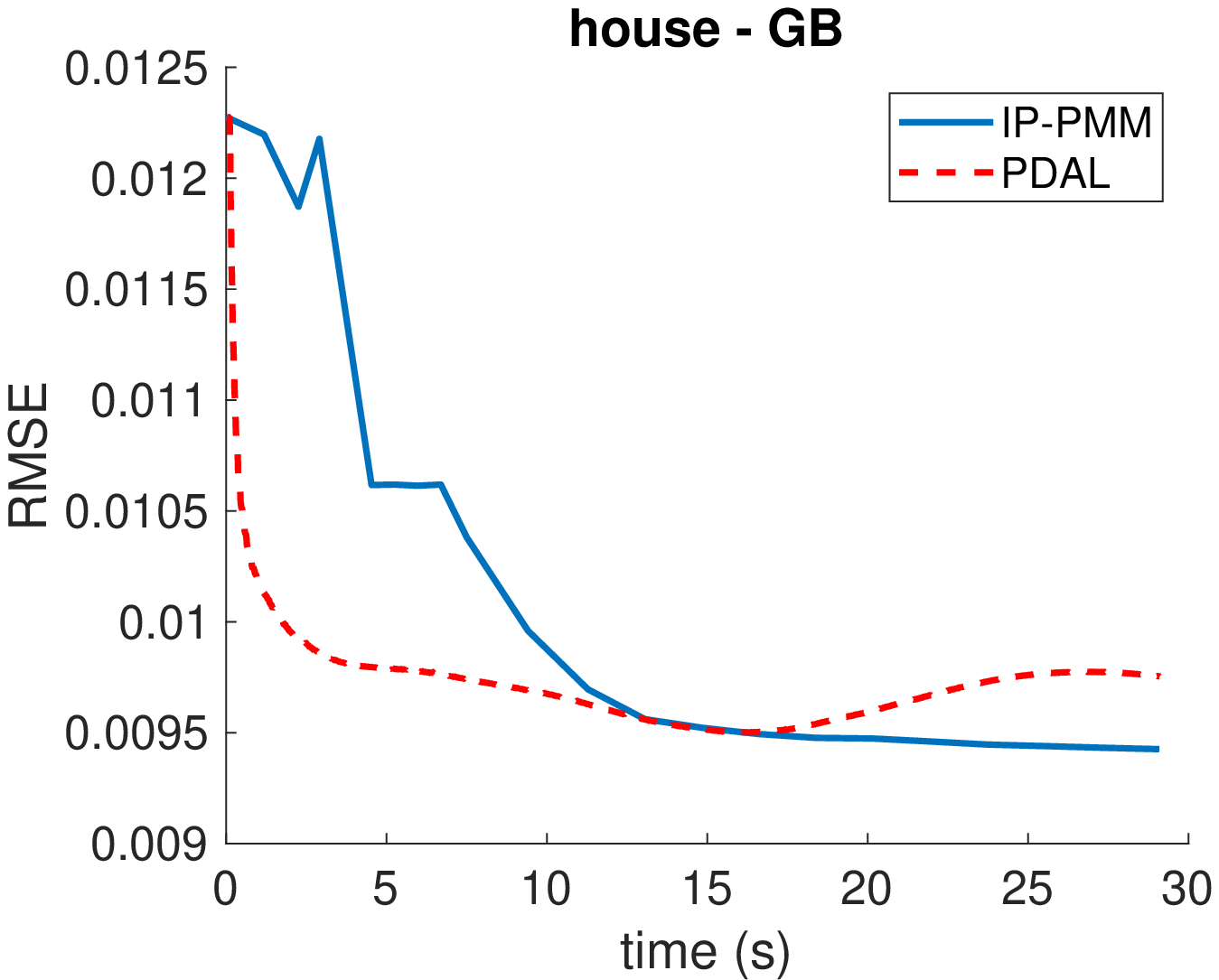}\hfill
    \includegraphics[width=0.31\textwidth]{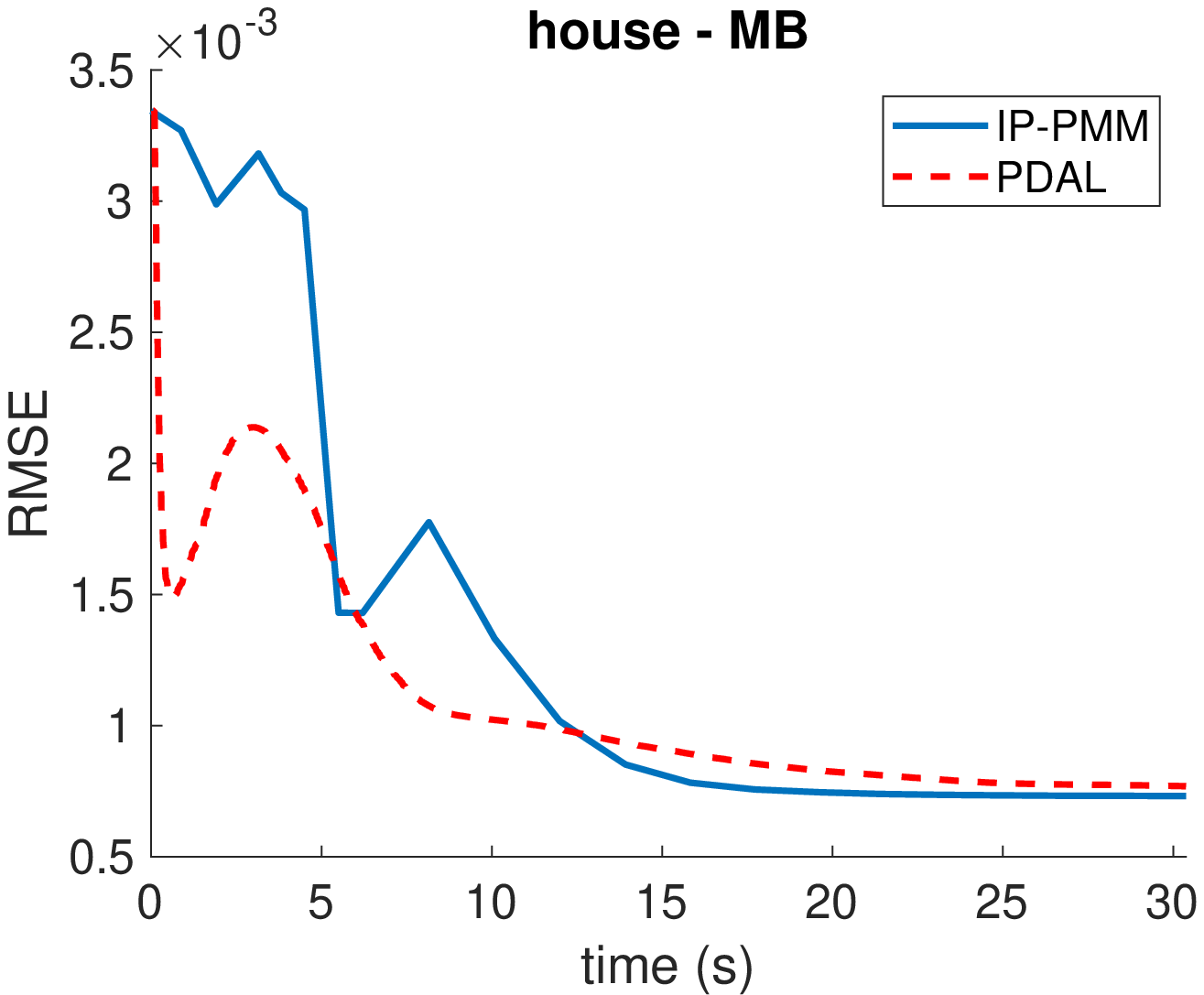}\hfill
    \includegraphics[width=0.31\textwidth]{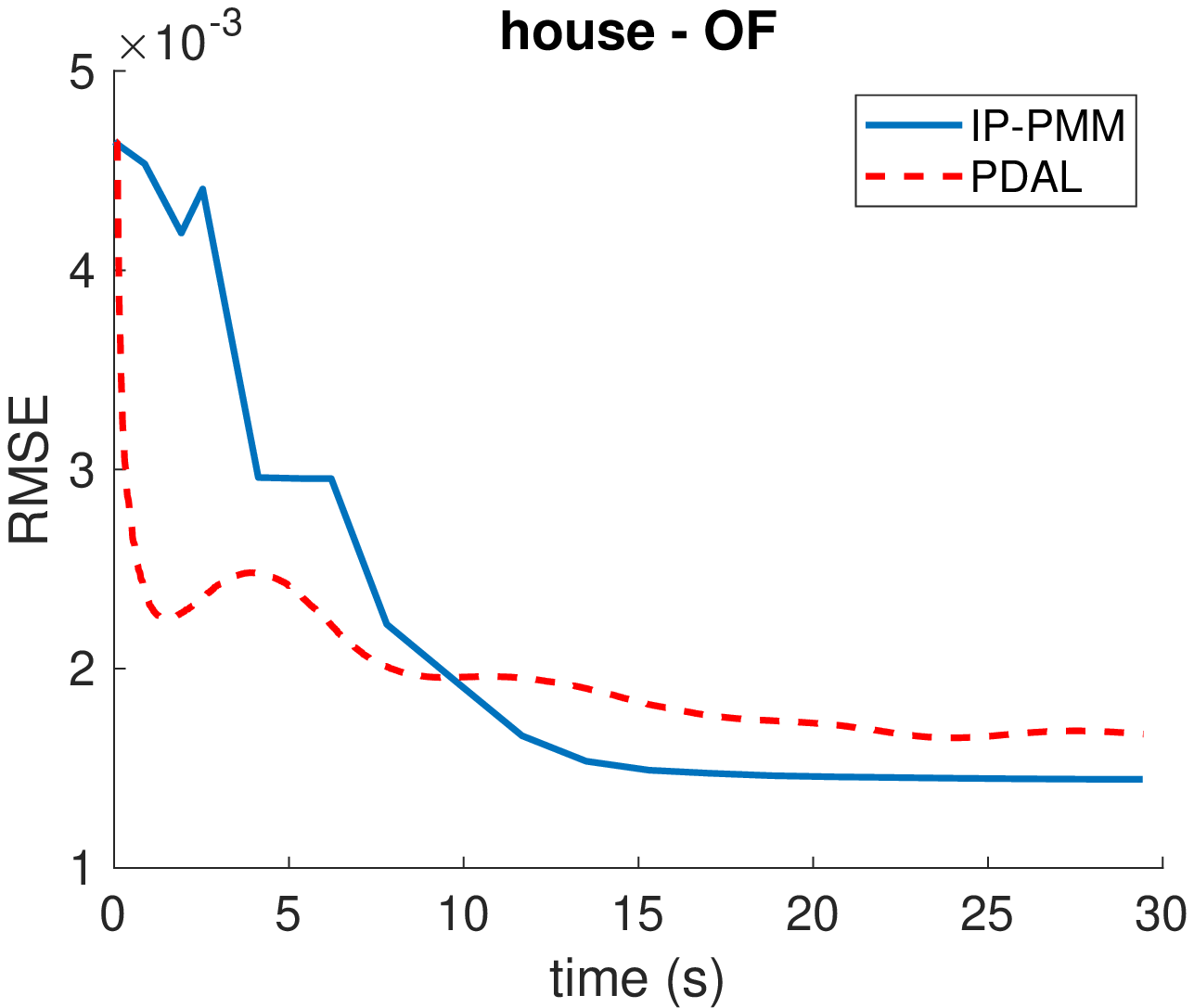}\\[1mm]
    \includegraphics[width=0.31\textwidth]{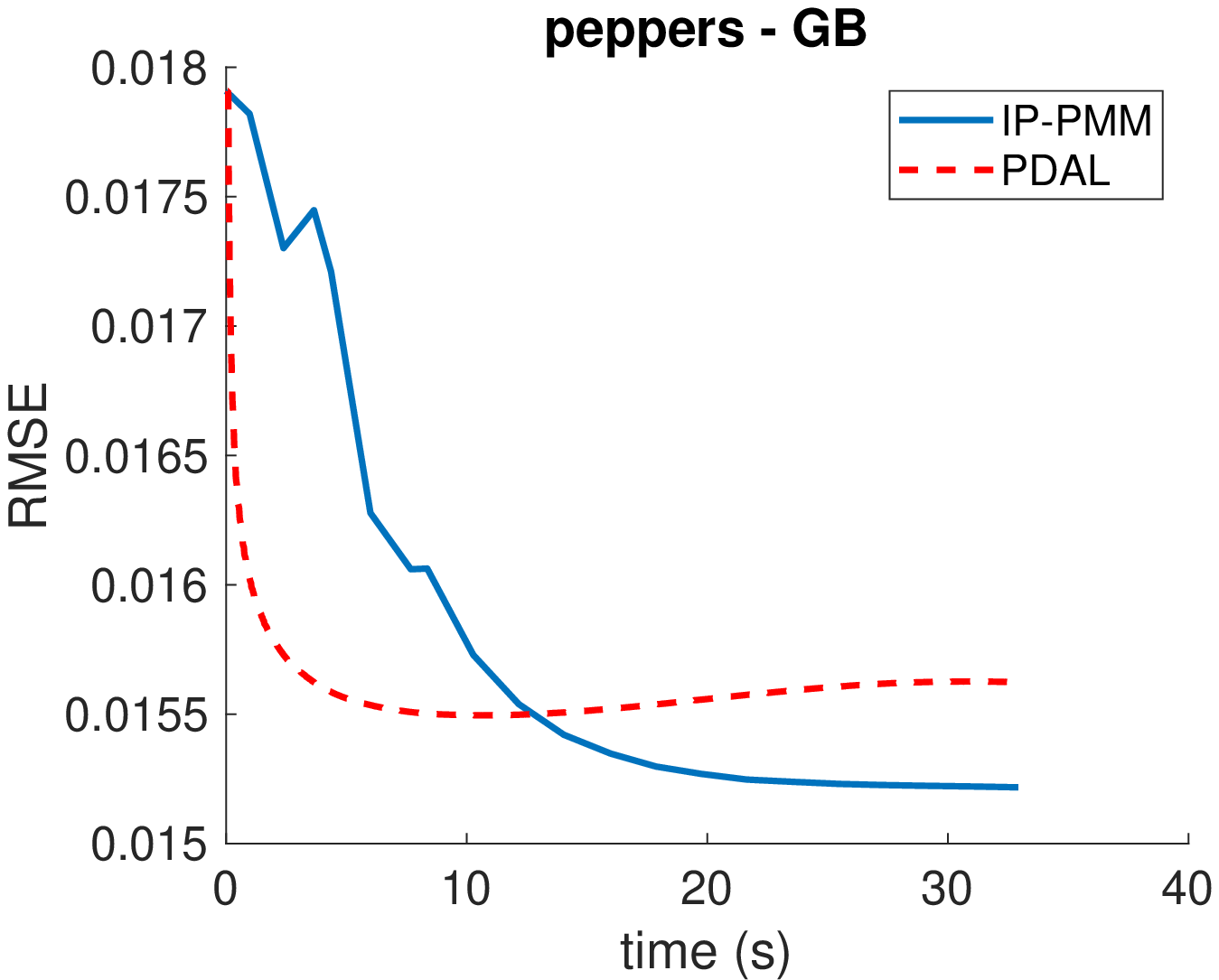}\hfill \includegraphics[width=0.31\textwidth]{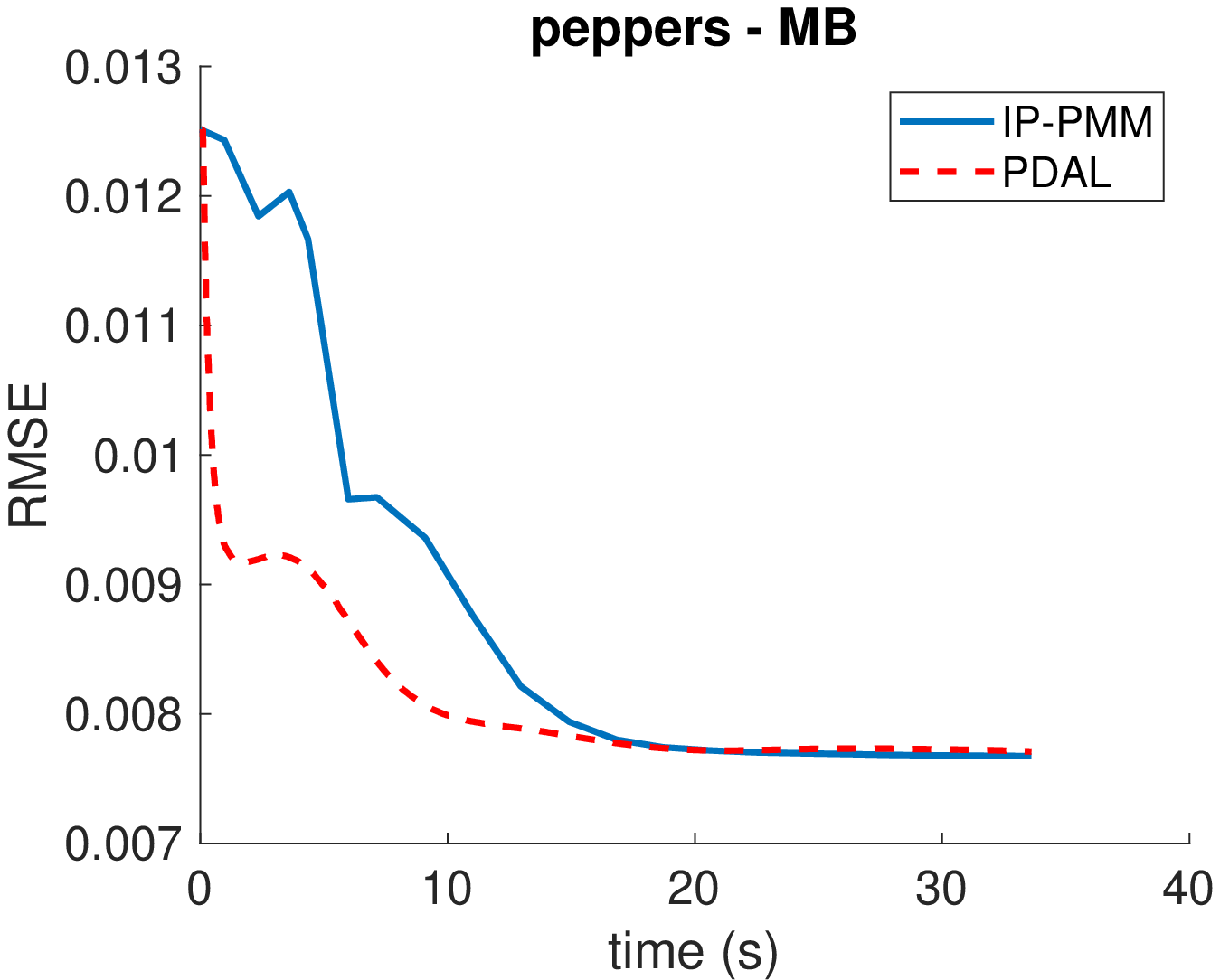}\hfill
    \includegraphics[width=0.31\textwidth]{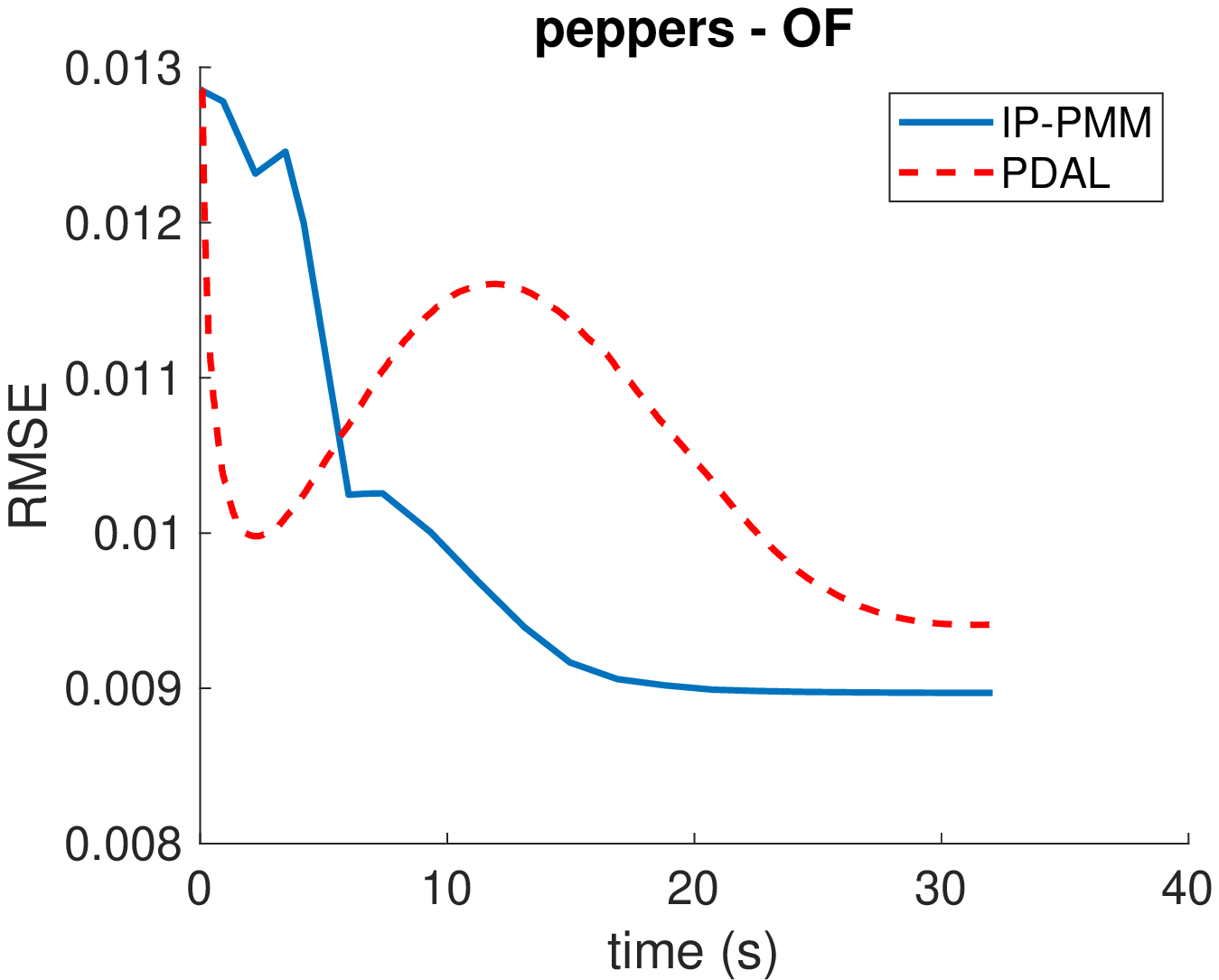}
    \caption{Comparison between IP-PMM and PDAL in terms of Root Mean Square Error (RMSE) vs execution time in the solution of the 9 image restoration problems. From top to bottom, the rows refer to the cameraman instances, the house instances and the peppers instances, respectively. From left to right, the columns refer to the GB, MB and OF, respectively.\label{fig:kltv_mse_plots}}
\end{figure}    

For all 9 tests we run 20 iterations of the IP-PMM method and let PDAL run for the same amount of time. 
In Figure~\ref{fig:kltv_mse_plots} we report a comparison between the two algorithms in terms of elapsed time versus Root Mean Square Error (RMSE) in the solution of the 9 instances described above. As can be seen from the plots, the IP-PMM clearly outperforms PDAL on the instances with GB and OF (columns 1 and 3, respectively, of Figure~\ref{fig:kltv_mse_plots}), while on the instances characterized by MB the two algorithms perform comparably.

\begin{table}[h!]
    \centering
    \caption{Comparison between IP-PMM and PDAL in terms of RMSE, PSNR and MSSIM computed at the solutions provided by the two algorithms. \label{tab:kltv_scores}}
    \footnotesize
    \begin{tabular}{l|c|c|c|c|c|c}
    	\toprule
    	                 &        \multicolumn{3}{c|}{\textbf{IP-PMM}}         &          \multicolumn{3}{c}{\textbf{PDAL}}         \\ \midrule
    	\textbf{Problem} & \textbf{RMSE} & \textbf{PSNR} & \textbf{MSSIM} & \textbf{RMSE} & \textbf{PSNR} & \textbf{MSSIM} \\ \midrule
    	cameraman - GB   &  4.85e$-$2   &  2.63e$+$1   &   8.33e$-$1   &  5.02e$-$2   &  2.60e$+$1   &   8.22e$-$1   \\
    	cameraman - MB   &  5.52e$-$2   &  2.52e$+$1   &   8.11e$-$1   &  5.59e$-$2   &  2.51e$+$1   &   7.77e$-$1   \\
    	cameraman - OF   &  5.14e$-$2   &  2.58e$+$1   &   7.98e$-$1   &  5.26e$-$2   &  2.56e$+$1   &   7.62e$-$1   \\ \midrule
    	house - GB       &  9.71e$-$2   &  2.03e$+$1   &   7.51e$-$1   &  9.88e$-$2   &  2.01e$+$1   &   6.92e$-$1   \\
    	house - MB       &  2.70e$-$2   &  3.14e$+$1   &   8.67e$-$1   &  2.77e$-$2   &  3.11e$+$1   &   8.43e$-$1   \\
    	house - OF       &  3.80e$-$2   &  2.84e$+$1   &   8.33e$-$1   &  4.09e$-$2   &  2.78e$+$1   &   7.70e$-$1   \\ \midrule
    	peppers - GB     &  1.23e$-$1   &  1.82e$+$1   &   7.46e$-$1   &  1.25e$-$1   &  1.81e$+$1   &   6.57e$-$1   \\
    	peppers - MB     &  8.76e$-$2   &  2.12e$+$1   &   8.90e$-$1   &  8.78e$-$2   &  2.11e$+$1   &   8.72e$-$1   \\
    	peppers - OF     &  9.47e$-$2   &  2.05e$+$1   &   8.01e$-$1   &  9.70e$-$2   &  2.03e$+$1   &   6.60e$-$1   \\ \bottomrule
    \end{tabular}%
\end{table}%

To better analyze the difference between the solutions provided by the two algorithms, one can look at Table~\ref{tab:kltv_scores}, where we report the value of three scores: RMSE, Peak Signal-to-Noise Ratio (PSNR), which is defined as
$$\mathrm{PSNR}(w) = 20 \log_{10}\frac{\max_i{\bar{w}^i}}{\mathrm{RMSE}(w)},$$
and Mean Structural SIMilarity (MSSIM), which is a structural similarity measure related to the perceived visual quality of the image (see \cite{wang:2004} for a detailed definition). It is worth noting that for RMSE smaller values are better, while for PSNR and MSSIM, higher values indicate better noise removal and perceived similarity between the restored and original image, respectively. From the table it is clear that in all the considered cases IP-PMM is able to produce a better restored image than PDAL, having always a larger MSSIM, also when the RMSE and PSNR values are comparable.

For the sake of space, we now restrict the comparison to the cases where the two algorithms seem to have reached equivalent solutions in terms of RMSE, to understand the differences in the restored images. We focus on the three instances in which $D$ represents MB (second column of Figure~\ref{fig:kltv_mse_plots}). In Figure~\ref{fig:mb_images} we report the results for cameraman, house and peppers with MB. By looking at the images one can see that those reconstructed by IP-PMM appear to be smoother (look, for example, at the sky in cameraman and house), which somehow indicates that the IP-PMM is better than PDAL in enforcing the TV regularization. Observe that this ``visual'' difference is reflected by the higher values of MSSIM reported for IP-PMM in Table~\ref{tab:kltv_scores}.
\begin{figure}[h!]
    \centering
    \includegraphics[width=0.3\textwidth]{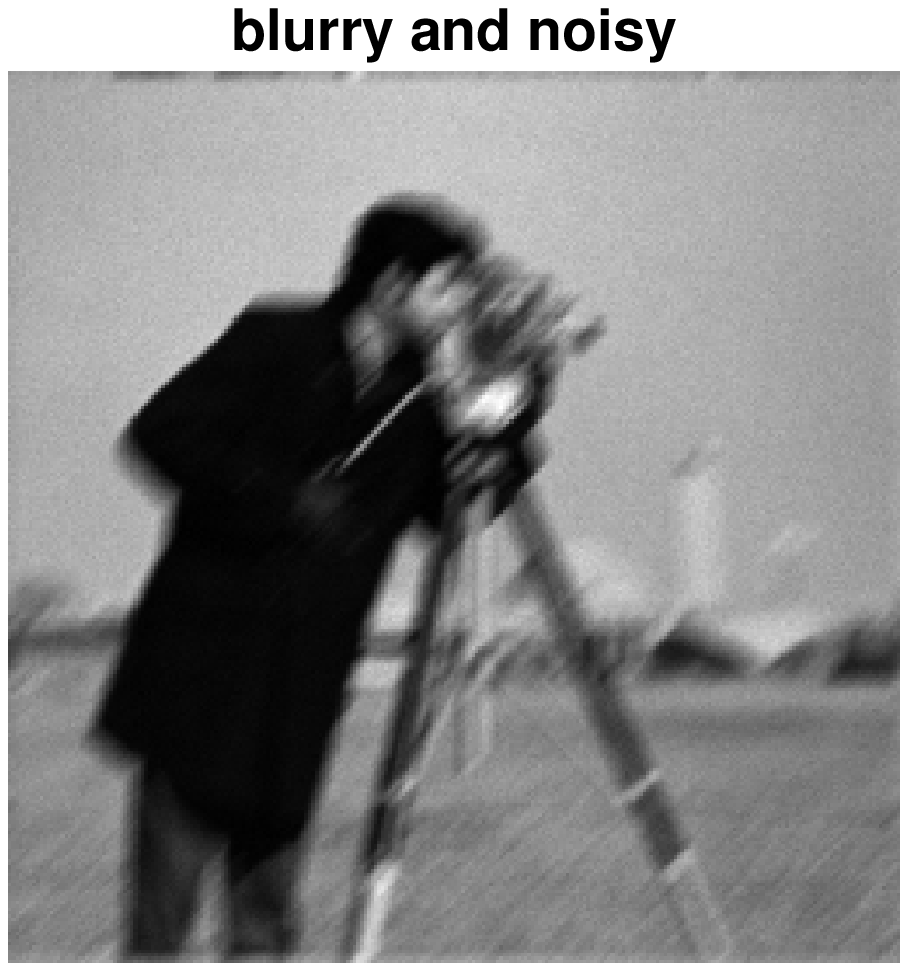} \hfill \includegraphics[width=0.3\textwidth]{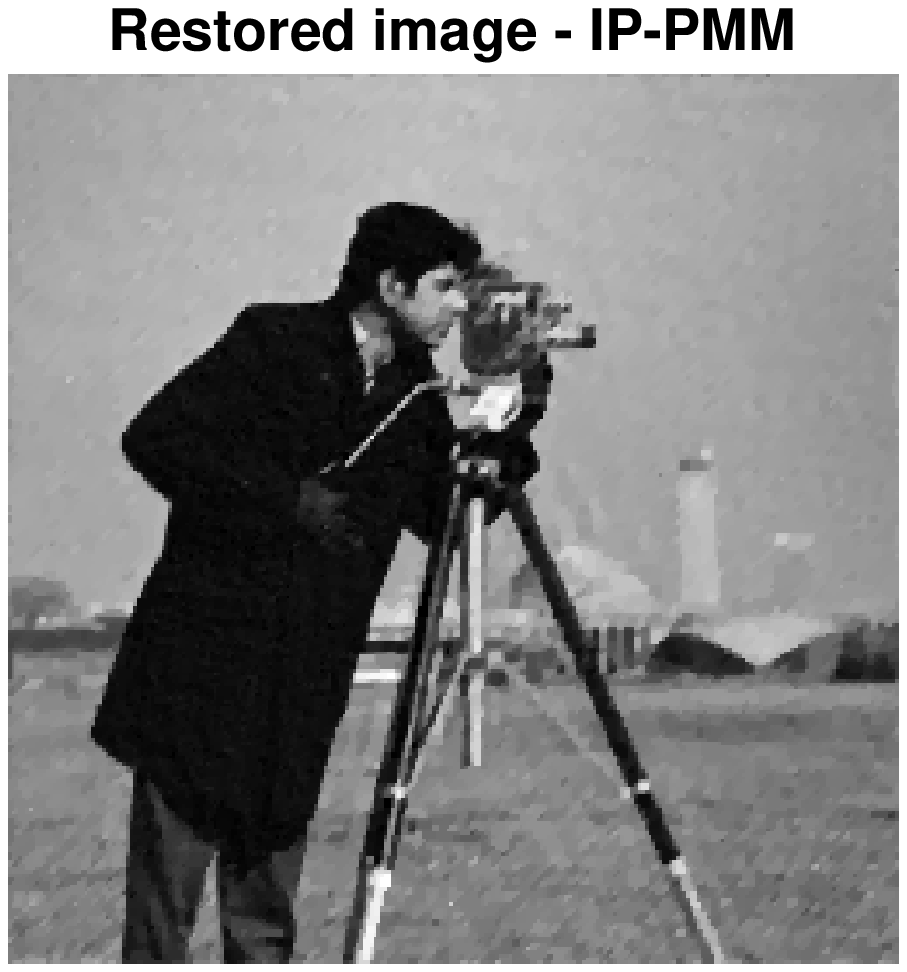} \hfill
    \includegraphics[width=0.3\textwidth]{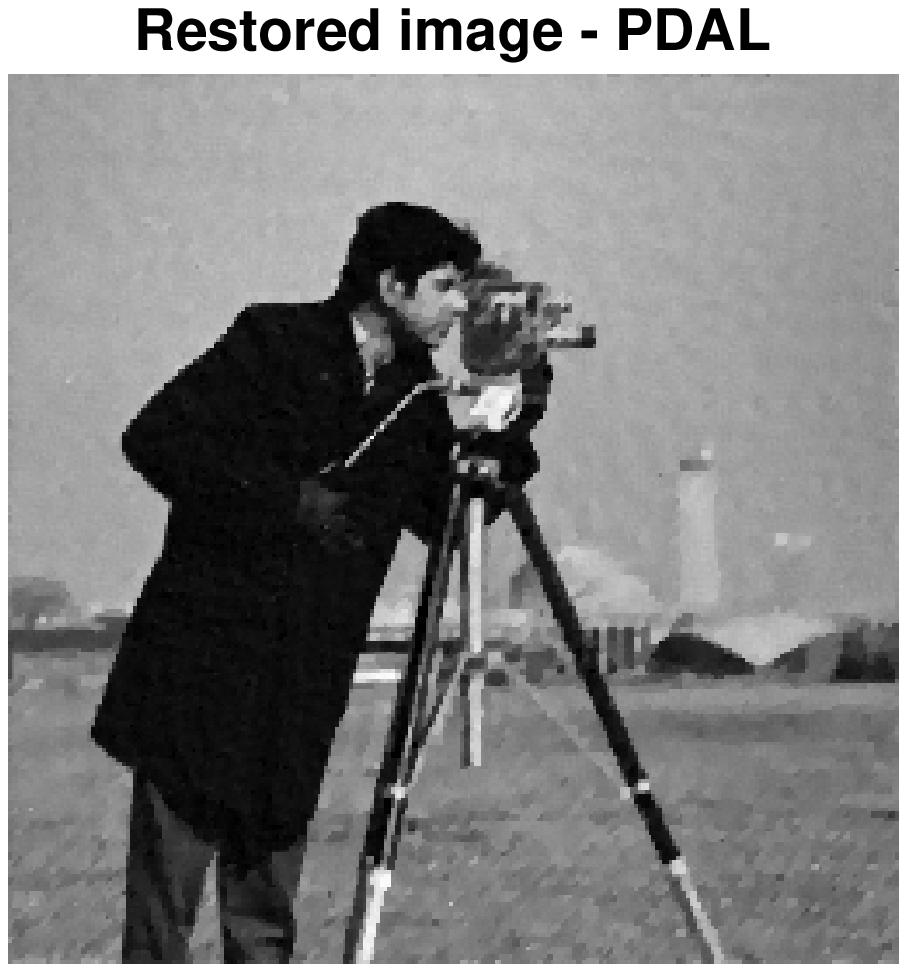} \\[5pt]
    \includegraphics[width=0.3\textwidth]{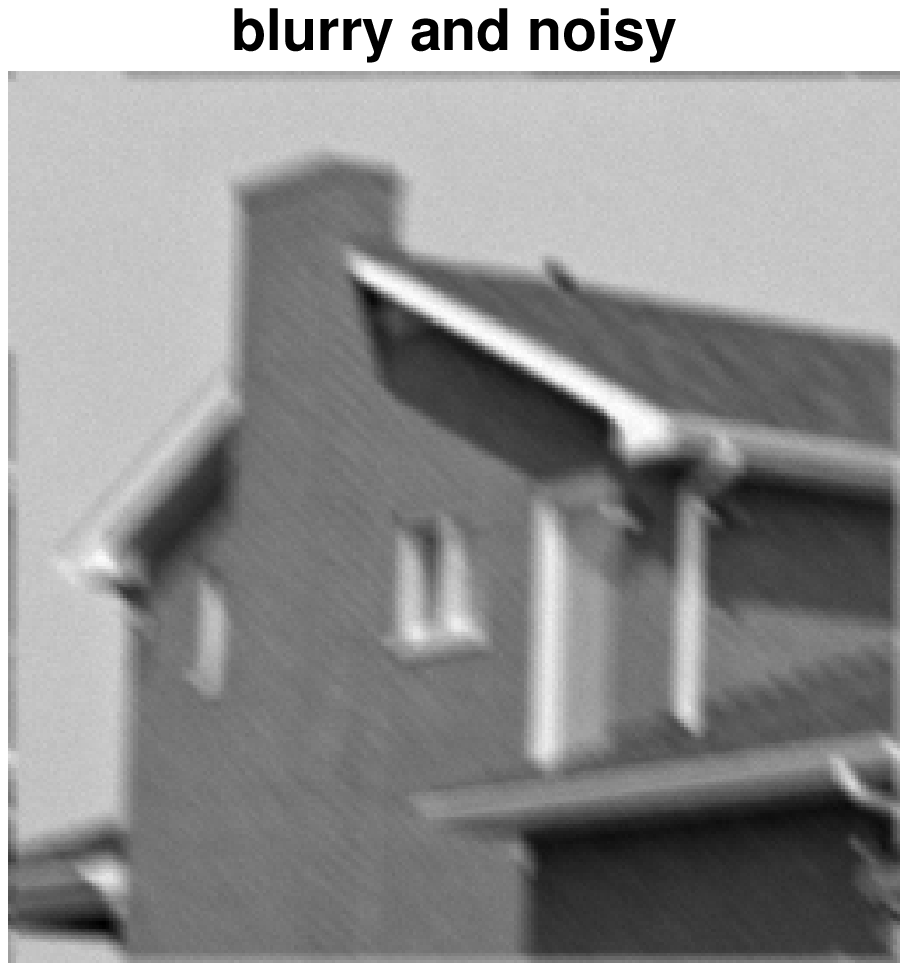} \hfill \includegraphics[width=0.3\textwidth]{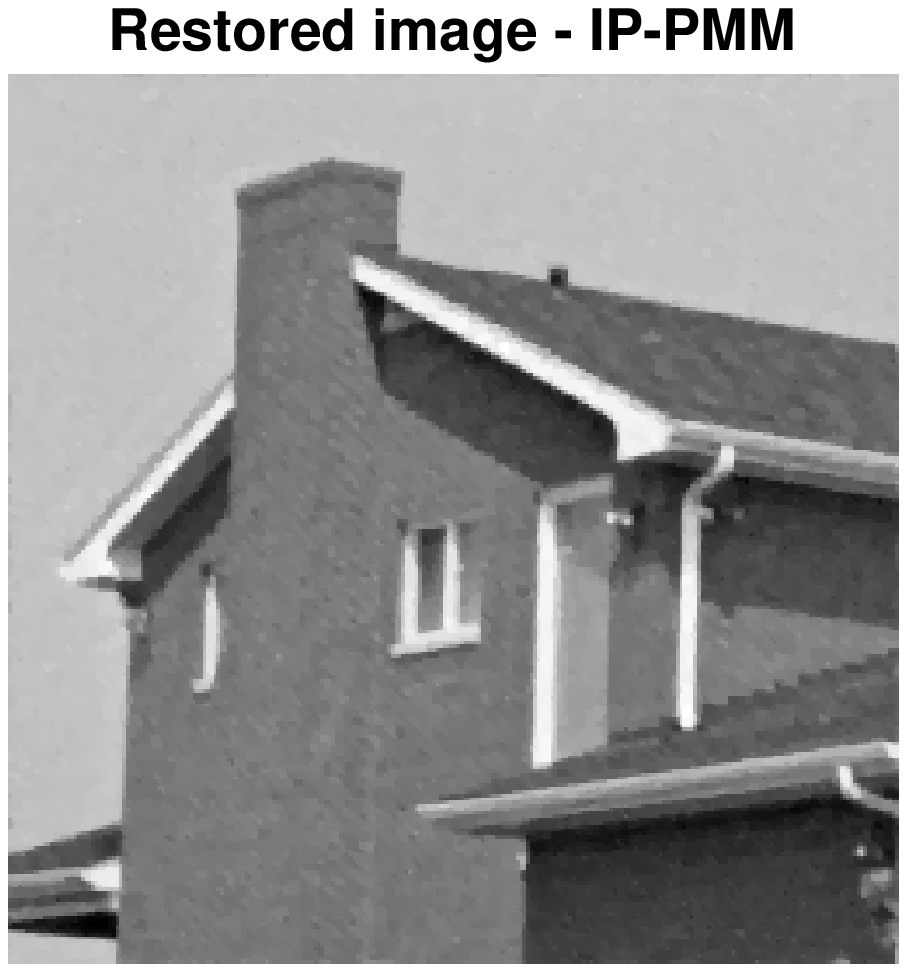} \hfill
    \includegraphics[width=0.3\textwidth]{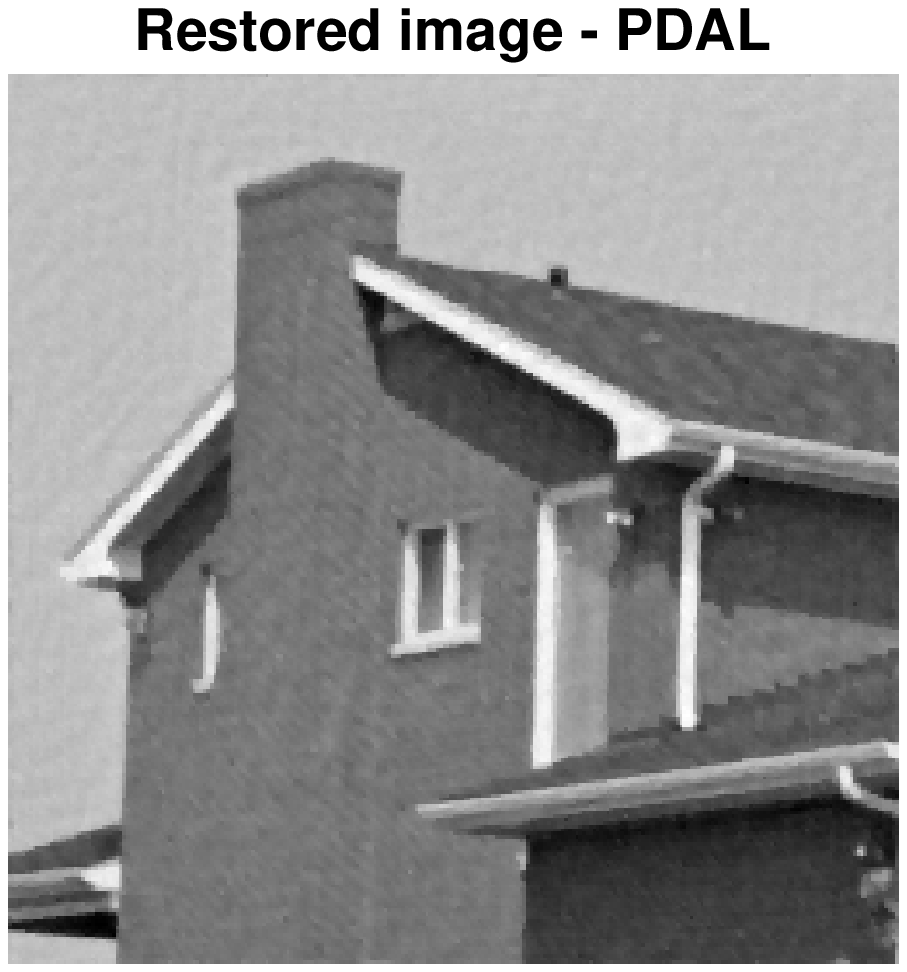} \\[5pt]
    \includegraphics[width=0.3\textwidth]{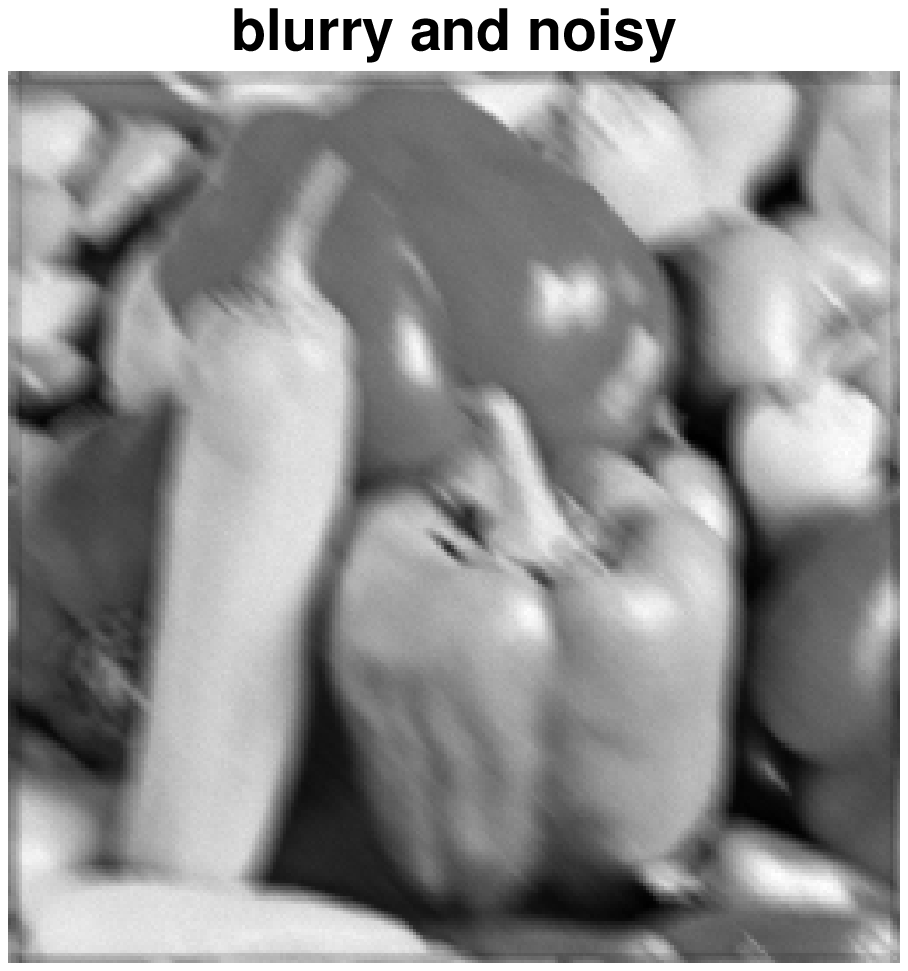} \hfill \includegraphics[width=0.3\textwidth]{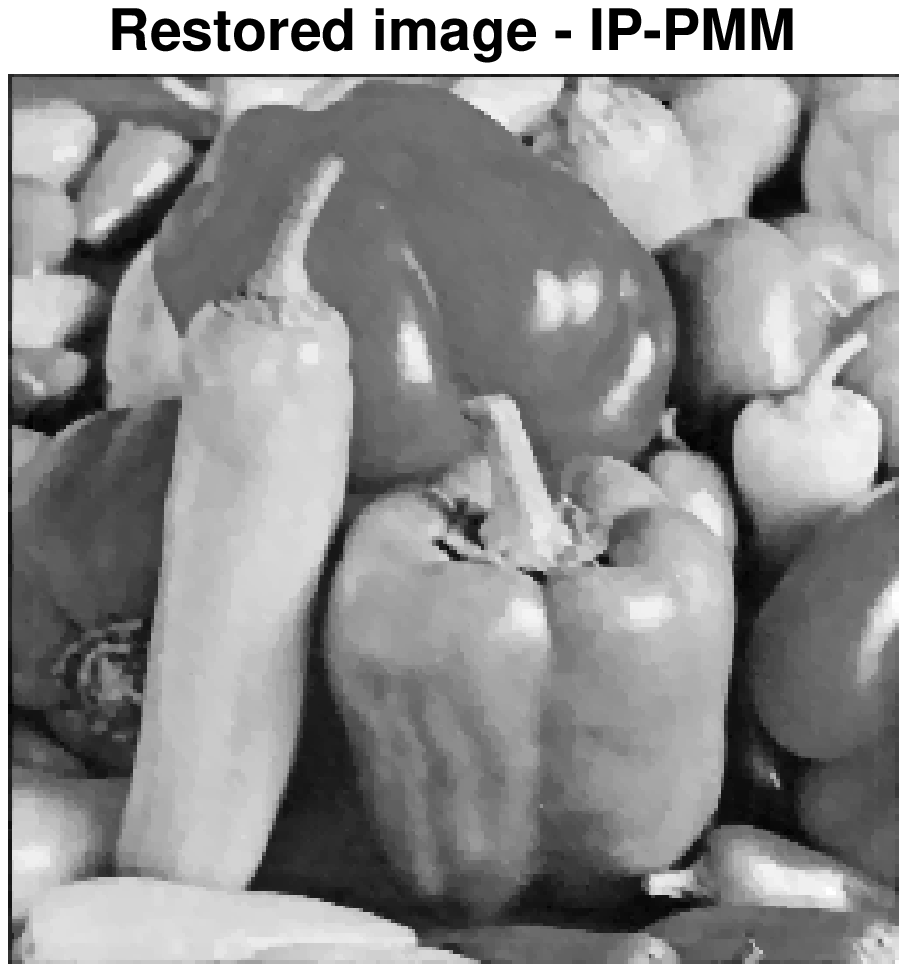} \hfill
    \includegraphics[width=0.3\textwidth]{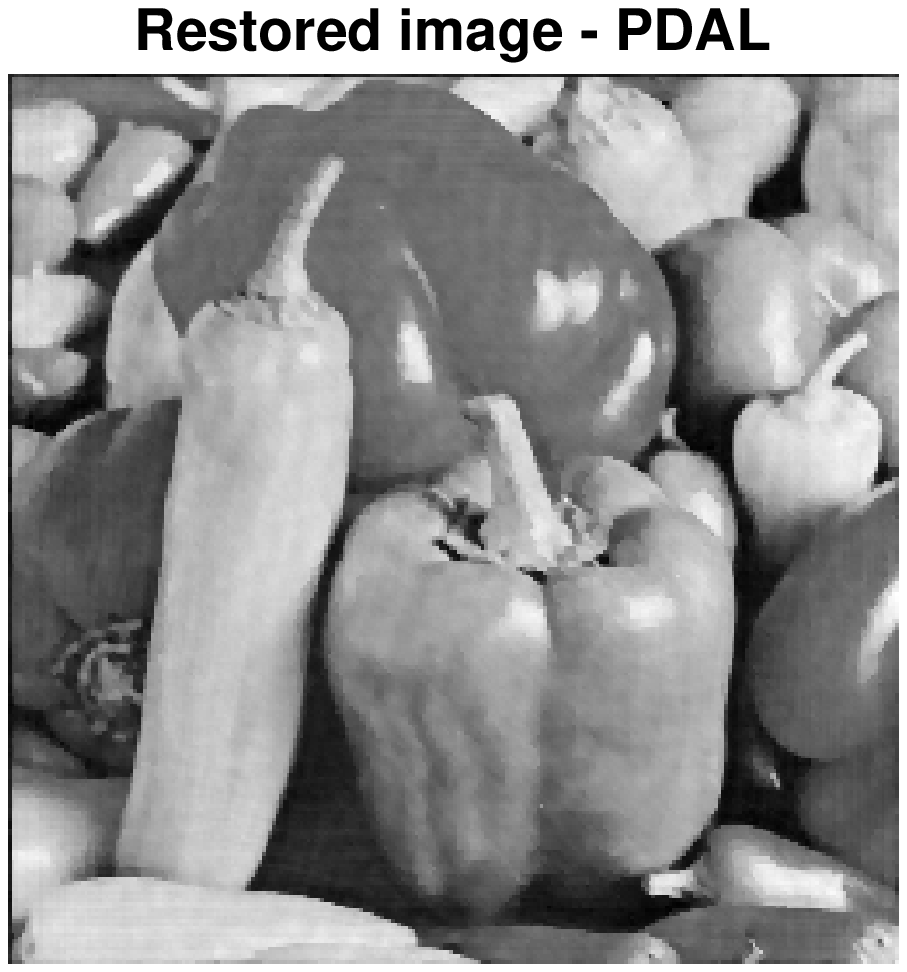}
    \caption{Results on cameraman, house and peppers with MB: noisy and blurry images (\textit{left}), images restored by IP-PMM (\textit{center}), images restored by PDAL (\textit{right}).\label{fig:mb_images}}
\end{figure}
%
%


\section{Linear Classification via Regularized Logistic Regression\label{sec:ClassificationRLR}}

Finally, we deal with the problem of training a linear binary classifier. Let us consider a matrix $D \in \Re^{n \times s}$ whose rows $(d^i)^\top$, with $i\in\{1,\ldots,n\}$, represent the training points, and a vector of labels $g \in \{-1,\,1\}^{n}$. In other words, we have a training set with $n$ binary-labeled samples and $s$ features. According to the logistic model, the conditional probability of having the label $g^i$ given the point $d_i$ has the form
$$
   p_{log}(w)_i = P(g^i | d^i) = \frac{1}{1+e^{-g^i\, w^\top d^i}},
$$
where $w\in\Re^s$ is the vector of weights determining the unbiased linear model under consideration.
By following the maximum-likelihood approach, the weight vector $w$ can be obtained by maximizing the log-likelihood function or, equivalently, by minimizing the \textit{logistic loss function}, i.e., by solving
$$ \min_{w} \  \phi(w) \equiv \frac{1}{n} \sum_{i=1}^{n} \phi_i(w), \quad \ \phi_i(w)=\log\left(1+ e^{-g^i\, w^\top d^i}\right). $$
To cope with the inherent ill-posedness of the estimation process, a regularization term is usually added to the previous model. For large-scale instances, where the features tend to be redundant, an $\ell^1$-regularization term is usually introduced to enforce sparsity in the solution, thus embedding feature selection in the training process.
This results in the well-studied $\ell^1$-regularized logistic regression model:
\begin{equation} \label{eqn:logistic_regression}
    \min_{w} \ \phi(w)  + \tau \|w\|_1,
\end{equation}
where $\tau > 0$.

As done in the previous sections, we can replace the nonsmooth model~\eqref{eqn:logistic_regression} with an equivalent smooth convex programming problem, i.e.,
\begin{equation} \label{smooth logistic regression problem}
    \begin{split}
        \min_{x} & \ f(x) \equiv \phi(w) + c^\top u,\\
        \text{s.t.} &\ Ax = b,\\
        & \ u \geq 0,
    \end{split}
\end{equation}
\noindent where, after introducing the additional constraint 
$u=w$, with $u = [(d^+)^\top,\ (d^-)^\top]^\top \in \Re^{2s}$, and letting $\overline{m} = s$, $\overline{n} = 3s$, we set $x = [w^\top, \ u^\top]^\top \in \Re^{\overline{n}}$, $c = \tau\, e_{2s}$, $b = 0_{\overline{m}}$, and $A \in \Re^{\overline{m} \times \overline{n}}$ defined as $A = [I_s \ \ -I_s \ \ I_s]$. The version of IP-PMM solving problem~\eqref{smooth logistic regression problem} is very similar to the one used to solve~\eqref{eqn:smooth_poispb}. The only difference here lies in the preconditioner. In particular, when solving problems of the form~\eqref{smooth logistic regression problem}, we use the preconditioner defined in~\eqref{poisson_block_diag_preconditioner} (and subsequently analyzed in Theorem~\ref{thm:spectral_analysis_block_diagonal_preconditioner}), but we set $\widetilde{H}_k = \diag(H_k)$.  

\subsection{Computational Experience}

To illustrate the performance of the IP-PMM on this class of problems, we consider a set of three linear classification problems from the LIBSVM dataset for binary classification\footnote{available from \url{https://www.csie.ntu.edu.tw/\textasciitilde cjlin/libsvmtools/datasets/binary.html}}. The names of the datasets, together with their number of features, training points and testing points are summarized in Table~\ref{tab:logregl1_datasets}. For real-sim there is no predetermined separation of data between train and test, hence we apply a hold-out strategy keeping $30\%$ of the data for testing. 

\begin{table}[htbp]
    \begin{small}
        \caption{Characteristics of the $\ell^1$-regularized logistic regression problems}\label{tab:logregl1_datasets} 
        \centerline{
            \begin{tabular}{l|r|r|r}
            	\toprule
            	\textbf{Problem} & \textbf{Features} & \textbf{Train pts} & \textbf{Test pts} \\ \midrule
            	gisette          &              5000 &                6000 &               1000 \\
            	rcv1             &             47,236 &               20,242 &             677,399 \\
            	real-sim         &             20,958 &               50,617 &              21,692 \\ \bottomrule
            \end{tabular}
        }
    \end{small}
\end{table}

To overcome the absence of the hyperplane bias in model \eqref{eqn:logistic_regression}, we add to the data matrices a further column with all ones, hence the resulting size of the problems is equal to $s+1$. For all the problems we set $\tau=\frac{1}{n}$, which is a standard choice in the literature. 

To assess the effectiveness and efficiency of the proposed method we compare it with two state-of-the-art methods:
\begin{itemize}
    \item an ADMM \cite{boyd:2011admm}\footnote{available from \url{http://www.stanford.edu/~boyd/papers/distr_opt_stat_learning_admm.html}};
    \item a MATLAB implementation of the newGLMNET method~\cite{yuan:2012} used in LIBSVM, developed by the authors of~\cite{yue:2019}\footnote{available from \url{https://github.com/ZiruiZhou/IRPN}}.
\end{itemize}
\noindent As in the tests presented in Section~\ref{sec:esperiments_poisson_restoration}, the solution of the augmented system in IP-PMM is performed by means of the MINRES implementation by Michael Saunders' team, with maximum number of iterations equal to 20 and tolerance $tol=10^{-4}$.

We compare the three algorithms in terms of objective function value and classification error versus execution time, on runs lasting $15$ seconds. The plots are reported in Figure~\ref{fig:logreg_l1_plots}. The IP-PMM is comparable with newGLMNET on the gisette instance, characterized by a very dense ($>99\%$) training data matrix, and both IP-PMM and newGLMNET clearly outperform ADMM. On the rcv1 and real-sim instances the IP-PMM method sightly outperforms newGLMNET in terms of classification error, and it is noticeably better in terms of the objective function value.

\begin{figure}[h!]
    \centering
    \includegraphics[width=0.47\textwidth]{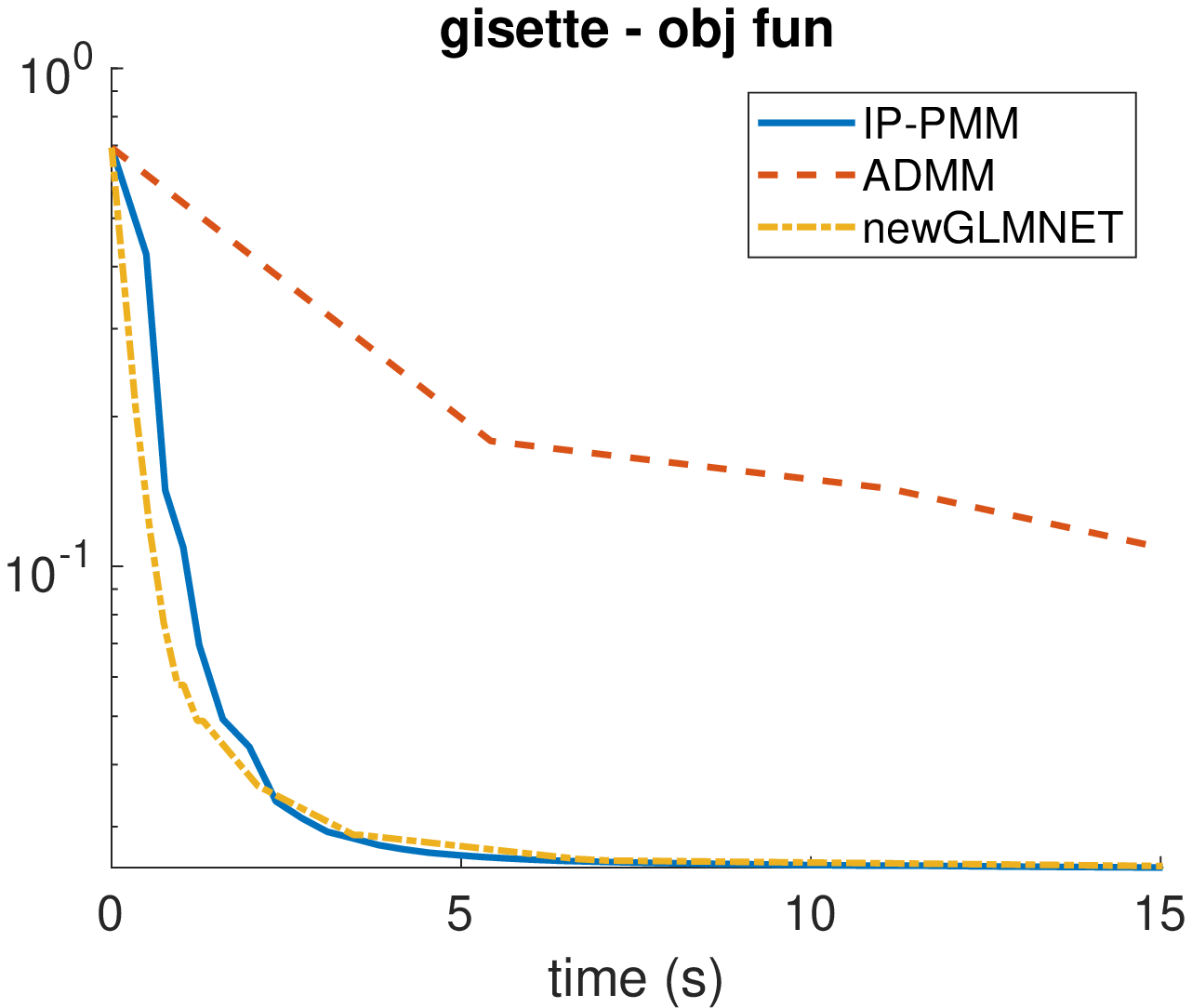} \hfill \includegraphics[width=0.47\textwidth]{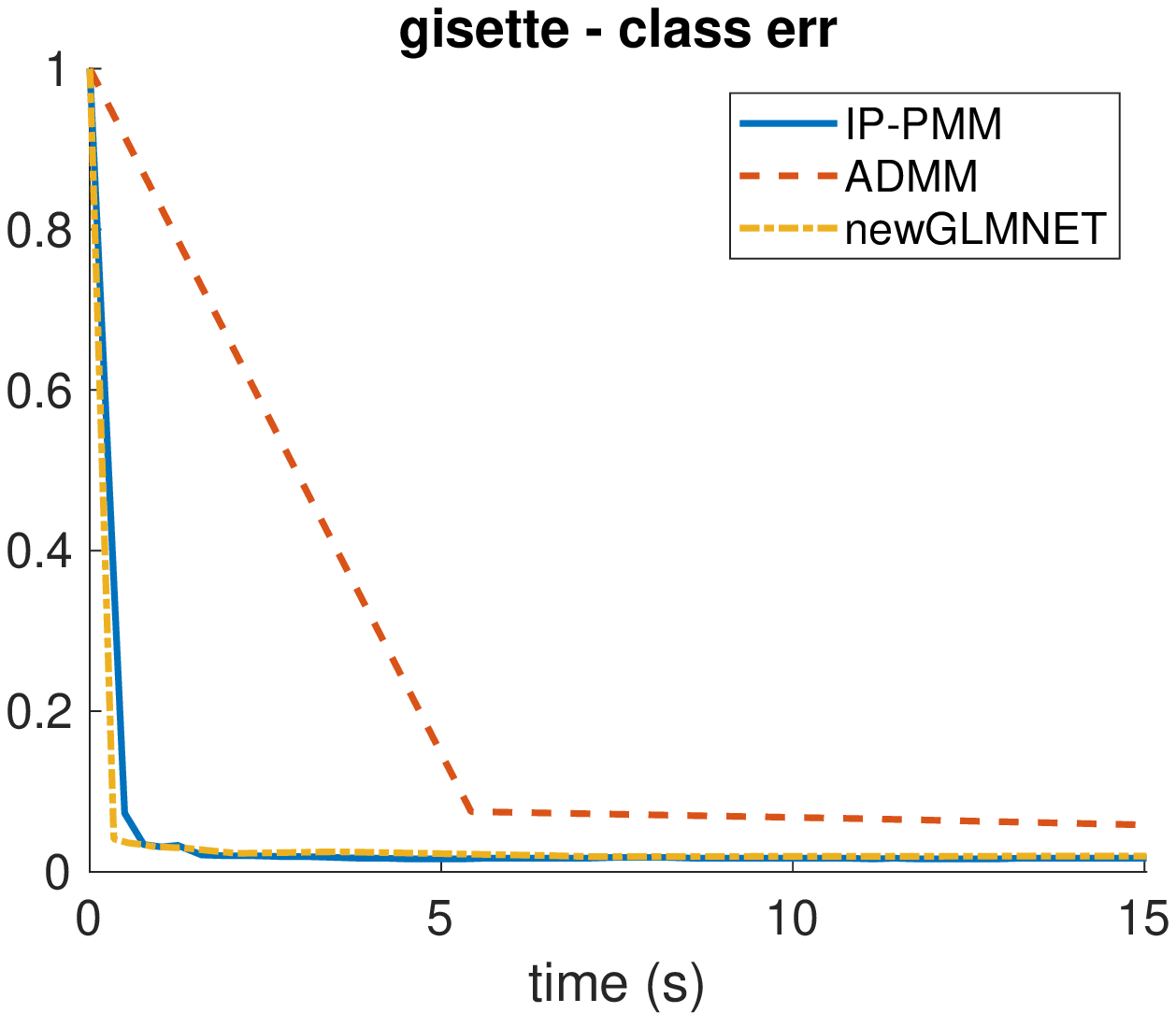}\\
    \includegraphics[width=0.47\textwidth]{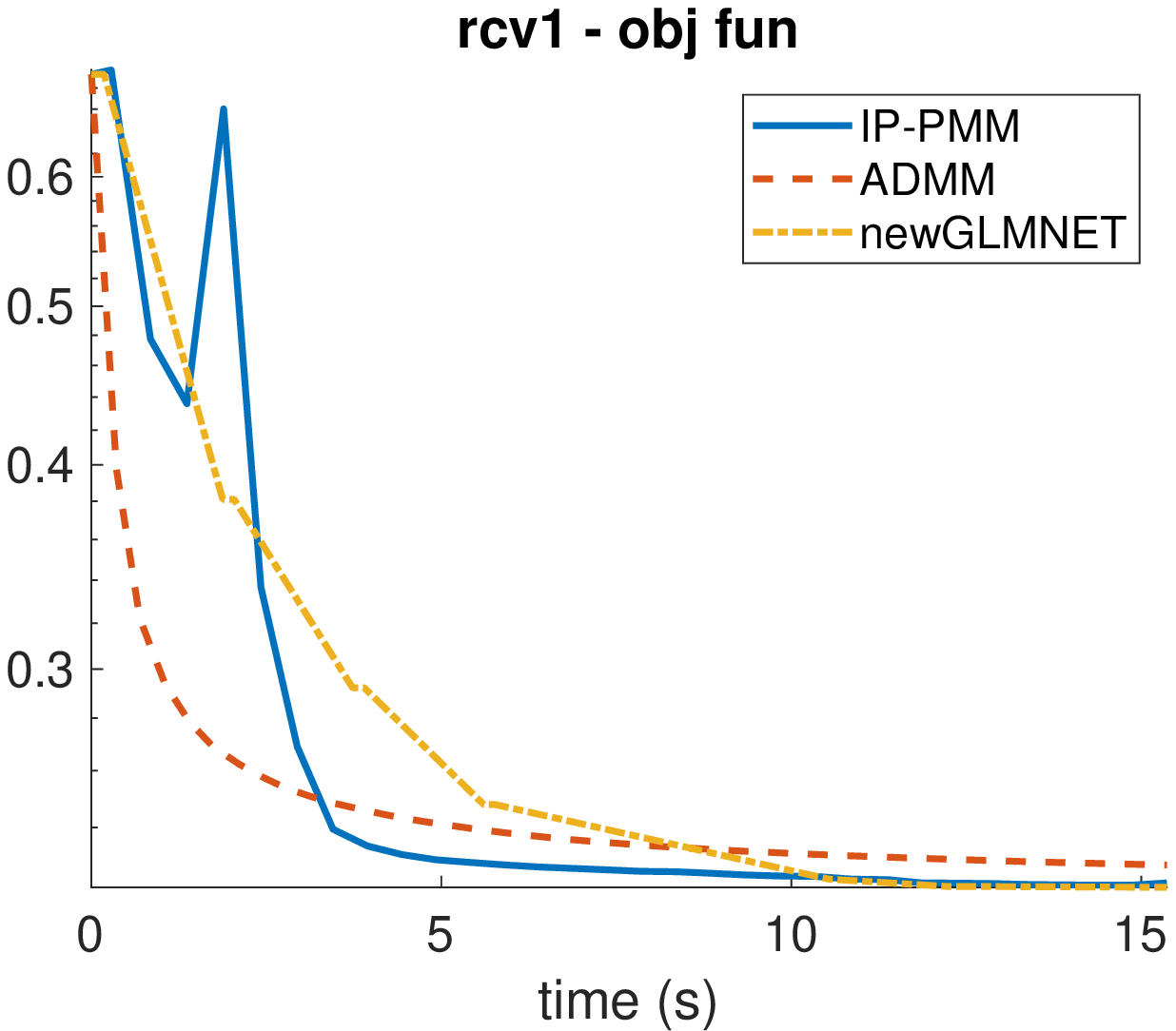} \hfill \includegraphics[width=0.47\textwidth]{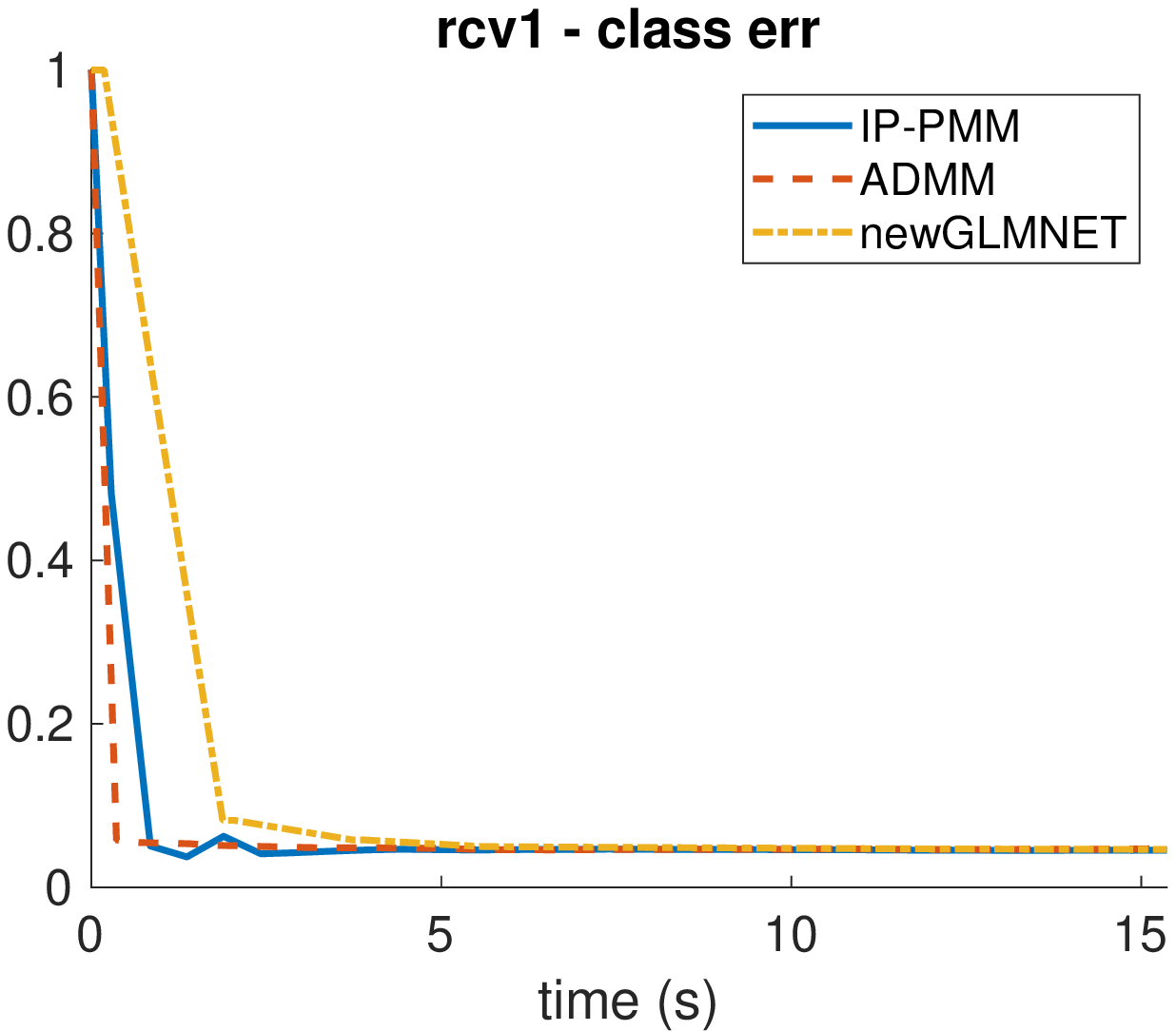}\\
    \includegraphics[width=0.47\textwidth]{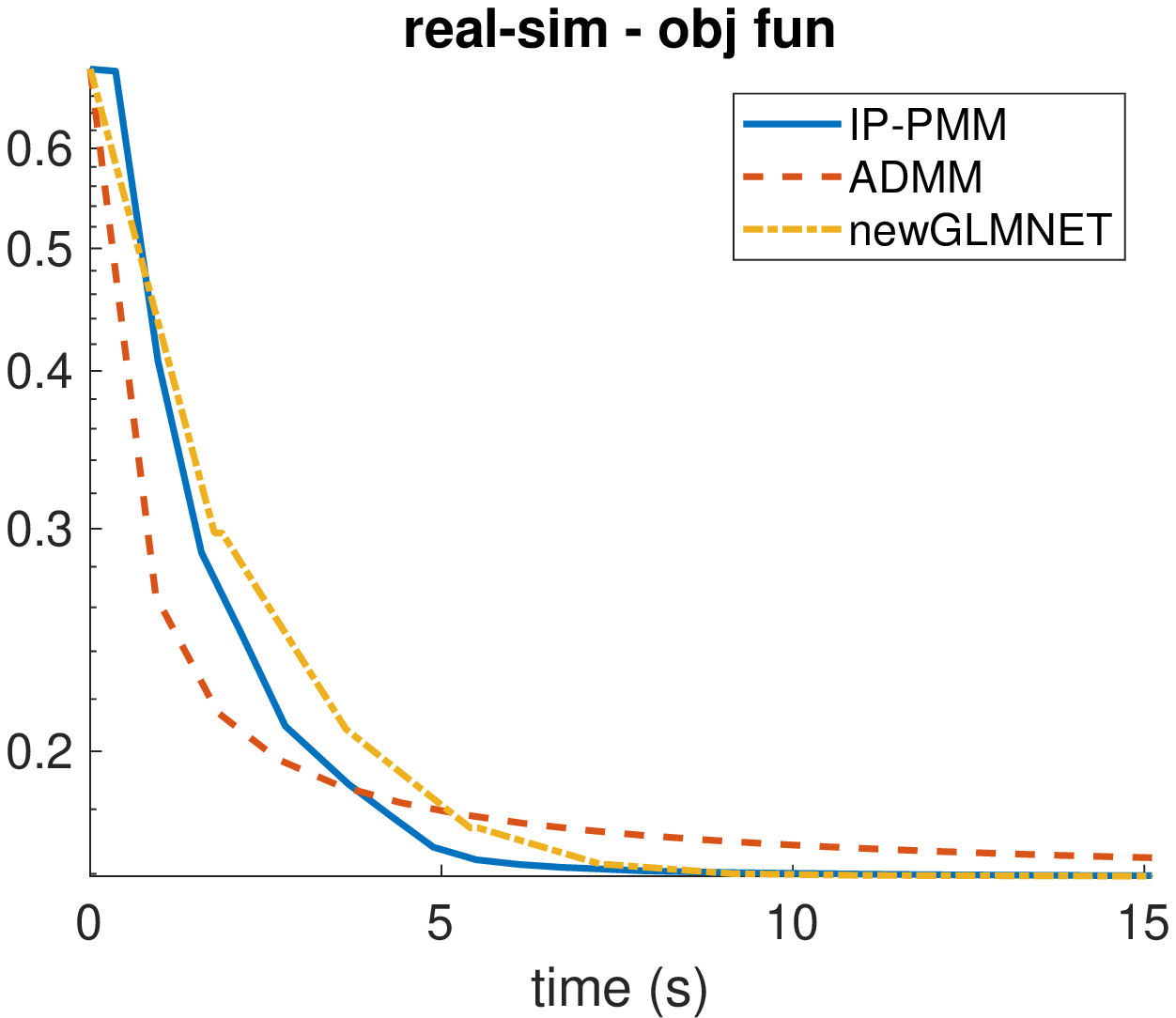} \hfill \includegraphics[width=0.47\textwidth]{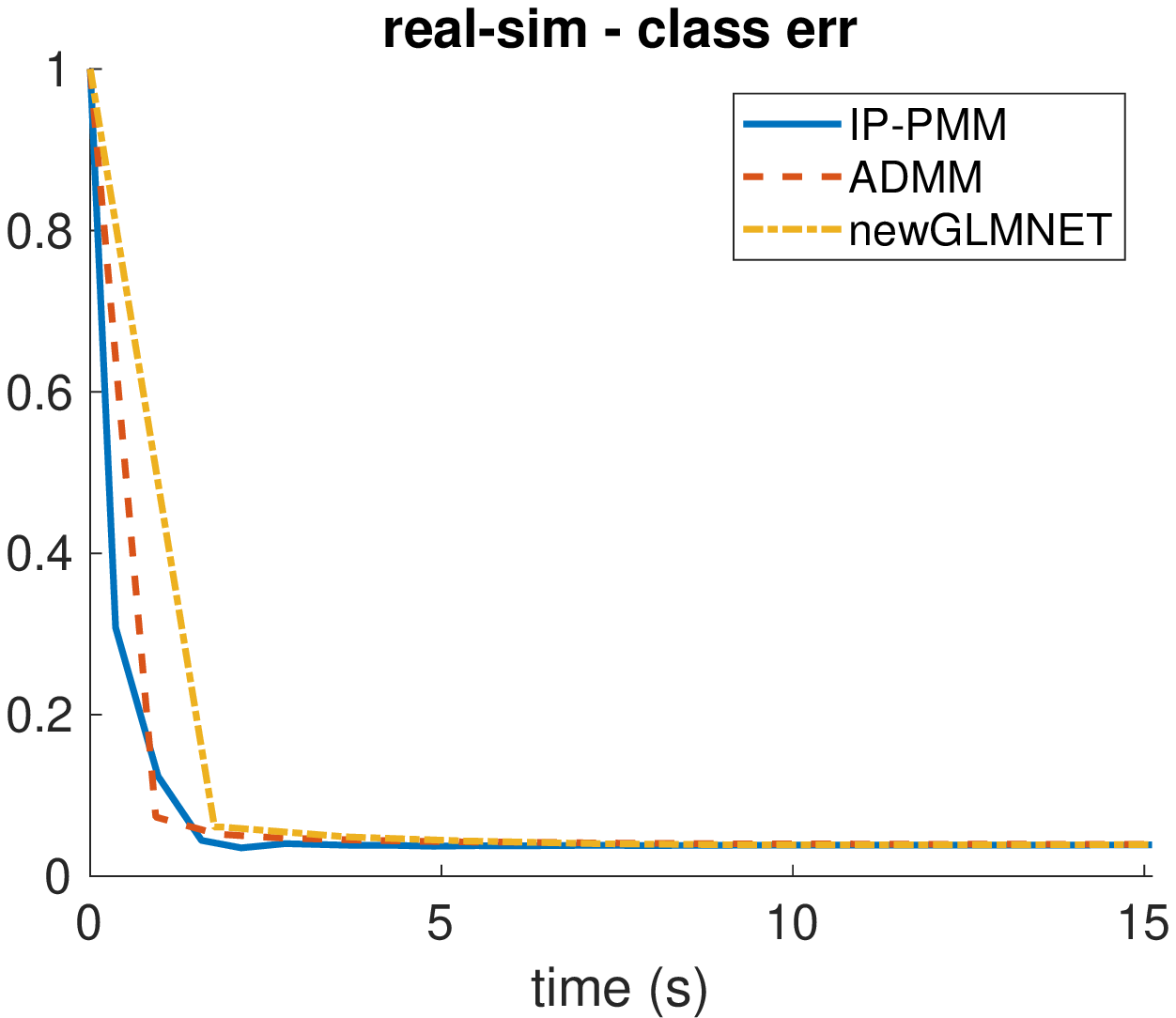}\\ \vskip -9pt
    \caption{Results on the three $\ell^1$-regularized logistic regression problems (objective function value and classification error versus execution time).\label{fig:logreg_l1_plots}}
\end{figure}

\begin{remark}
Let us notice that in the presented experiments, in order to invert each associated preconditioner, we needed to perform a Cholesky decomposition of an approximate normal equations matrix $A \widetilde{H}_k^{-1} A^\top + \delta_k I_{\overline m}$ (or a sub-matrix of it; e.g., as in Section \ref{sec:fMRI}), where $\widetilde{H}_k \approx \nabla^2 f(x_k) + \Theta_k^{-1} + \rho_k I_{\overline n}$. In certain cases, if $A$ has full row-rank, one could instead employ an approximation based on a random sketching strategy, presented in \cite{ChowdhuryLondonAvronDrineasRandomSketch,ChowdhuryYangDrineasPCML}. We should mention however, that this forces one to employ a singular value decomposition to invert the resulting matrix instead of a Cholesky decomposition (which is expected to be faster on the sparse problems under consideration). Furthermore, in the case of rank-deficient matrix $A$ this would create certain computational issues, as then the dropping heuristic presented in Section \ref{sec:DroppingStrategy} would be very expensive to employ (see the discussion in \cite[Section 5]{ChowdhuryLondonAvronDrineasRandomSketch}). Nevertheless, we should mention that in certain applications for which either most of the singular values of $A$ are close to zero, or the Cholesky decomposition of the approximated Schur complement is expensive, such an approach could prove advantageous.
\end{remark}

\section{Conclusions\label{sec:Conclusions}}

We have presented specialized IPMs for quadratic and general convex nonlinear optimization problems
that model various sparse approximation instances. We have shown that by a proper choice of linear
algebra solvers, which are a key issue in IPMs, we are able to efficiently solve the larger but smooth
optimization problems coming from a standard reformulation of the original ones. This confirms
the ability of IPMs to handle large sets of linear equality and non-negativity constraints. Computational experiments
have been performed on diverse applications: multi-period portfolio selection, classification of fMRI data,
restoration of blurry images corrupted by Poisson noise, and linear binary classification via regularized
logistic regression. Comparisons with state-of-the-art first-order methods, which are widely used to tackle
sparse approximation problems, have provided evidence that the presented IPM approach can offer a noticeable advantage
over those methods, especially when dealing with not-so-well conditioned problems.

We also believe that the results presented in this work may provide a basis for an in-depth analysis of the application
of IPMs to many sparse approximation problems, and we plan to work in that direction in the future.

\section*{Acknowledgments}
We thank the anonymous reviewers for their careful reading of the manuscript and their insightful remarks and suggestions,
which allowed us to improve the quality of our work.

\bibliographystyle{siamplain}
\bibliography{gonsar,itersolve,bibportfolio,bib_pougk,biblio_classif_imag}
\end{document}